\newcolumntype{M}[1]{>{\centering\arraybackslash}m{#1}}
\newcolumntype{N}{@{}m{0pt}@{}}
\newtheorem{theorem}{Theorem}[section]
\newtheorem{proposition}[theorem]{Proposition}
\newtheorem{lemma}[theorem]{Lemma}
\newtheorem{corollary}[theorem]{Corollary}
\newcommand{\edit}[1]{{\bf \textcolor{blue}
		{[#1\marginpar{\textcolor{red}{***}}]}}}
\theoremstyle{definition}
\newenvironment{customremark}[1]
{\innercustomremark}
{\endinnercustomremark}
\begin{document}
	
	\title[Synchronization of PCOs on trees]{Global synchronization of pulse-coupled oscillators on trees}

	\author{Hanbaek Lyu}
	\address{Hanbaek Lyu, Department of Mathematics, The Ohio State University, Columbus, OH 43210.}
	\email{\texttt{yu.1242@osu.edu}}

	\date{\today}
	
	\keywords{Pulse-coupled oscillators, synchronization, clock synchronization, self-stabilization, scalability, distributed algorithm}
	\subjclass[2010]{34C15; 68W15; 92A09}

	\begin{abstract}
	     Consider a distributed network on a finite simple graph $G=(V,E)$ with diameter $d$ and maximum degree $\Delta$, where each node has a phase oscillator revolving on $S^{1}=\mathbb{R}/\mathbb{Z}$ with unit speed. Pulse-coupling is a class of distributed time evolution rule for such networked phase oscillators inspired by biological oscillators, which depends only upon event-triggered local pulse communications. In this paper, we propose a novel inhibitory pulse-coupling and prove that arbitrary phase configuration on $G$ synchronizes by time $51d$ if $G$ is a tree and $\Delta \le 3$. We extend this pulse-coupling by letting each oscillator throttle the input according to an auxiliary state variable. We show that the resulting adaptive pulse-coupling synchronizes arbitrary initial configuration on $G$ by time $83d$ if $G$ is a tree. As an application, we obtain a universal randomized distributed clock synchronization algorithm, which uses $O(\log \Delta)$ memory per node and converges on any $G$ with expected worst case running time of $O(|V|+(d^{5}+\Delta^{2})\log |V|)$. 
	\end{abstract}

	\maketitle

	\section{Introduction}

Complex systems consist of a large number of locally interacting agents and exhibit a wide range of collective emergent behaviors without any centralized control mechanism. A prime example of such is a population of coupled oscillators, where mutual efforts to synchronize their phases or frequency aggregate and the whole population eventually reaches a synchronized state \cite{strogatz2001exploring}. Examples from nature include blinking fireflies \cite{buck1938synchronous}, neurons in the brain \cite{tateno2007phase}, and the circadian pacemaker cells \cite{enright1980temporal}. These biological oscillators, commonly modeled as pulse-coupled oscillators (PCOs) in dynamical systems literature, communicate in an extremely efficient way. Namely, they send pulses to all of their neighbors on a particular event (e.g., firing of neurons, blinking of fireflies, etc.), irrespective of any kind of global information, and oscillators receiving such pulses adjust their phase accordingly.

The study of emergence of synchrony from a system of coupled oscillators has a natural and strong connection to the field of distributed algorithms in theoretical computer science. A distributed algorithm is a time evolution rule for a system of processors by which the population collectively performs higher-level tasks, which may go beyond the capacity of an individual agent. A fundamental building block of such distributed algorithm is to synchronize local clocks in each processors so that a shared notion of time is available across the network. Especially in modern wireless sensor networks, clock synchronization is a fundamental issue \cite{potdar2009wireless, akyildiz2002wireless} and poses challenging theoretical questions \cite{sundararaman2005clock}. Namely, a large number of low-cost and low-power wireless sensors are deployed over a large region in an ad-hoc manner, and they are supposed to collectively perform tasks such as monitoring and measuring large-scale phenomena \cite{pousttchi2009applicat}. Due to the limited resource in each sensor compared to the large size of the network, scalable algorithms for clock synchronization are becoming more valuable. Pulse couplings have their inherent efficiency and scalability, which inspired many efficient algorithms for clock synchronization algorithms for wireless sensor networks \cite{hong2005scalable, pagliari2011scalable, wang2012energy, wang2013increasing}.

The main contribution of this work is a rigorous derivation of the emergence of synchrony in a population of inhibitory PCOs on finite trees. A fundamental issue in inhibitory PCOs on trees is that nodes with large degree may receive input pulses so often that its phase is constantly inhibited and it may never send pulses to its neighbors. This divides the tree into non-communicating components so global synchrony may not emerge. Our key innovation is to equip each oscillator with an auxiliary state variable which may throttle the input pulse. This effectively breaks symmetries in local configurations on finite trees and reduces the relevant diameter to consider in constant time. Our main result shows that on an arbitrary finite tree with diameter $d$, \textit{arbitrary} initial configuration synchronizes by time $83d$. We believe that this is the first time that synchronization of PCOs from every configuration on tree networks is derived rigorously, especially with an explicit bound on the convergence time that is optimal up to a constant factor. Our result parallels the well-known results of Mirollo and Strogatz \cite{mirollo1990synchronization} and Klingmayr and Bettsetter \cite{klinglmayr2012self} on PCOs on all-to-all networks.  

We also implement our pulse-coupling as a distributed clock synchronization algorithm via a suitable discretization procedure. Consider a distributed network of processors over a communication network $G=(V,E)$ with diameter $d$ and maximum degree $\Delta$. Our algorithm is extremely efficient in that each node uses only $O(1)$ bits of memory and $O(1)$ bits of information exchange during a unit time. Moreover, it converges to synchrony from arbitrary initial configuration in $O(d)$ times if $G$ is a finite tree. In order to overcome the restriction on trees, we make it a multi-layer algorithm by composing with a distance $\le 2$ coloring and spanning tree algorithms (both of which use randomization). Running the three algorithms simultaneously, this gives us a memory-efficient ($O(\log\Delta)$ memory per node), self-stabilizing (global convergence from any initial configuraiton), with asymptotically linear worst case expected running time. (See the simulation in Figure \ref{fig:simulation}.) The composite algorithm is scalable and can be applied to dynamic and growing networks, as long as the maximum degree is bounded. To the author's knowledge, our algorithm is the first truly scalable and self-stabilizing clock synchronization algorithm with asymptotically linear expected running time.

\begin{figure*}[h]
	\centering
	\includegraphics[width = 1 \linewidth]{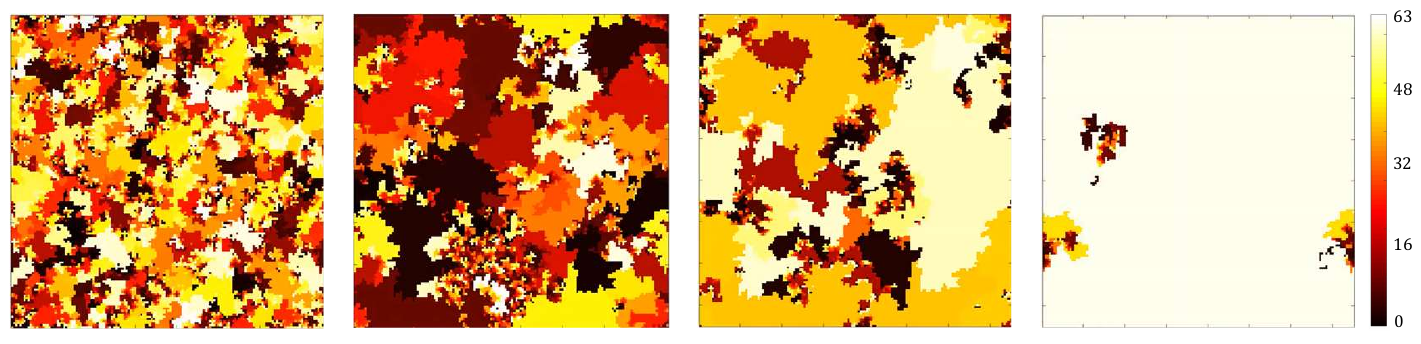}
	\caption{ Snapshots of the adaptive 4-coupling modulo $M=64$ (definition given in Section \ref{section:application}) on a uniform spanning tree of the $150 \times 150$ torus with Moore neighborhood, where each lattice point has 8 neighbors. Colors indicate phases (in $\mathbb{Z}_{64}$) of each node (see color bar on the right). Initial configuration is generated from the uniform product measure, and the snapshots are taken at times $t=$ 32, 64, 250, and 470 seconds, from left to right; The time scale is so that a full oscillation cycle on $\mathbb{Z}_{64}$ of an isolated node takes exactly 1 second. The system reaches global synchrony by time $t=563$ seconds. The readers are referred to the author's \href{http://www.hanbaeklyu.com/publications/}{website} for a supplementary movie of the full compelling phase dynamics.
	}
	\label{fig:simulation}
\end{figure*}

This paper is organized as follows. We give definitions of our models and statements of the main results in Section \ref{section:Definitions and statement of results}. A brief survey on related works is given in Section \ref{Section:Related works}. We introduce an alternative representation of our models in Section \ref{Section:Relative circular representation}, which will be the basis of rigorous analysis. In Section \ref{Section:The width lemma and the branch width lemma}, we prove some preliminary observations and establish two fundamental statements (Lemma \ref{lemma:branchwidth} and Proposition \ref{prop:branch_excitation}). Then we prove our main results (Theorem \ref{4treethm} and \ref{A4Ctreethm}) in Section \ref{Section:Proof of main results} assuming three lemmas concerning local limit cycles (Lemmas \ref{lemma:branch_attraction}, \ref{lemma:branchorbit_a}, and \ref{lemma:branchorbit_b}); These lemmas will be proved in the following section, Section \ref{section:locallemmas}. We devote Section \ref{section:application} to an application of our results to distributed clock synchronization algorithms. In the section, we implement our pulse-coupling as a distributed algorithm (with pseudocodes provided), and show the resulting algorithm has desired properties in Theorem \ref{thm:implementation}. Moreover, by composing with a randomized distance $\le 2$ coloring and spanning tree algorithms, we obtain a universal clock synchronization algorithm, whose performance is analyzed in Corollary \ref{thm:clocksync}. Finally, some concluding remarks are given in Section \ref{Section:Concluding remarks}.

\vspace{0.3cm}
\section{Definitions and  statement of results}
\label{section:Definitions and statement of results}

First we introduce some general terminology. Fix a graph $G=(V,E)$. For any subgraph $H\subseteq G$, we denote by $V(H)$ and $E(H)$ its node set and edge set, respectively. For two subgraphs $S,H\subseteq G$, we denote by $S-H=S-V(H)$ the graph obtained by deleting all nodes (along with all incident edges) of $H$ from $S$. For each node $v\in V$, denote by $N(v)$ its set of neighbors. The \textit{degree} of $v$ is defined by $\deg(v)=|N(v)|$. A graph $H$ is called a \textit{path} if it has at most two nodes of degree at most 1 and all the other nodes have degree 2. For any two distinct nodes $u,v\in V$, we say a path $P\subseteq G$ is \textit{from $u$ to $v$ (or $v$ to $u$)} if $u,v\in V(P)$ and $\deg(u)=1=\deg(v)$. The \textit{diameter} of $G$ is defined by $\text{diam}(G) = \max_{P\subseteq G} |E(P)|$, where $P$ runs over all paths in $G$.

A \textit{phase configuration} at time $t$ is a map $\phi_{\bullet}(t):V\rightarrow S^{1}:=\mathbb{R}/\mathbb{Z}$. A \textit{coupling} is a deterministic time evolution rule for phase configurations, which determines for each $G$ and initial phase configuration $\phi_{\bullet}(0)$ on $G$ a \textit{trajectory} (or \textit{orbit}) $(\phi_{\bullet}(t))_{t\ge 0}$. We say the trajectory \textit{synchronizes} by time $\tau\ge 0$ if for all $u,v\in V$ and $t\ge \tau_{0}$, we have $\phi_{v}(t)\equiv \phi_{u}(t)\mod 1$. We call the unit of time a ``second''. Let $\Omega$ be a metric space. For any function $f:\mathbb{R}\rightarrow \Omega$ and $t\in \mathbb{R}$, we denote $f(t^{\pm})=\lim_{s\rightarrow t\pm 0} f(s)$ in case the corresponding limit exists.

\vspace{0.2cm}
\subsection{The 4-coupling.} We first introduce a prototypical pulse-coupling, which generalizes the $4$-color firefly cellular automaton introduced earlier by the author in \cite{lyu2015synchronization}. For each $v\in V$ and $t\ge 0$, define three events $B_{v}(t)$, $R_{v}(t)$, $P_{v}(t)\subseteq S^{1}\times S^{1}$ by 
\vspace{0.1cm}
\begin{eqnarray*}
	B_{v}(t) &=& \{ \phi_{v}(t^{-})=1 \}\cap \{\phi_{v}(t)=0\}   \\
	R_{v}(t) &=& \{ \text{$\exists u\in N(v)$ s.t. $B_{u}(t)$ occurs} \}\\
	P_{v}(t) &=& R_{v}(t) \cap \{ 0< \phi_{v}(t) \le 1/2 \}.
\end{eqnarray*}
We say $v$ \textit{blinks}, \textit{receives a pulse}, and \textit{gets pulled} at time $t$ if $B_{v}(t)$, $R_{v}(t)$, and $P_{v}(t)$ occurs, respectively. For two adjacent nodes $u,v\in V$, we say that $v$ \textit{receives a pulse from $u$} (resp., \textit{gets pulled} by $u$) at time $t$ if $B_{u}(t)\cap R_{v}(t)$ (resp., $B_{u}(t)\cap P_{v}(t)$) occurs. A \textit{pulse-coupling} is a coupling determined by the following equations
\begin{eqnarray*}
	\begin{cases}
		\dot{\phi}_{v}(t) \equiv 1 & \text{if $R_{v}(t)$ does not occur}\\
		\phi_{v}(t^{+}) = f(\phi_{v}(t))  & \text{if $R_{v}(t)$ occurs},
	\end{cases}
\end{eqnarray*}
where $f:[0,1]\rightarrow [0,1]$ is a prescribed \textit{phase response curve} (PRC). In words, each PCO evolves on the unit circle (say, clockwise) with unit speed, resetting its phase to $f(\phi_{v}(t))$ upon receiving a pulse at time $t$. Note that for a trajectory $(\phi_{\bullet})(t))_{\ge 0}$ evolving through a pulse-coupling, for each node $v\in V$, $\phi_{v}(t)-t$ is a left-continuous step function in $t$. We say the pulse-coupling is \textit{inhibitory} (resp., \textit{excitatory}) if $f(x)\le x$ (resp., $f(x)\ge  x$) for all $x\in [0,1]$. The \textit{4-coupling} is an inhibitory pulse-coupling with the following PRC $f_{0}:[0,1]\rightarrow [0,1]$ (see Figure \ref{PRC_4C}),
\begin{equation*}
f_{0}(x) = 0\cdot  \mathbf{1}[0,1/4](x)+ (x-1/4)\cdot  \,\mathbf{1}[1/4, 1/2](x) + x\cdot \mathbf{1}(1/2,1](x),
\end{equation*}
where $\mathbf{1}(A)$ denotes the indicator function of event $A$, taking value 1 on $A$ and 0 otherwise. 

\begin{figure*}[h]
	\centering
	\includegraphics[width = 0.25 \linewidth]{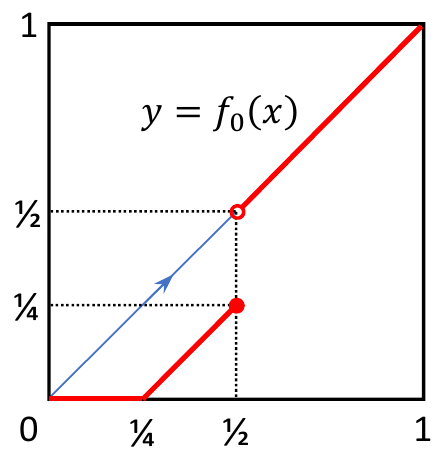}
	\caption{ PRC of the 4-coupling represented as the red graph.  
	}
	\label{PRC_4C}
\end{figure*}   

For an illustration, we give a non-synchronizing example of the 4-coupling on a star with four leaves in Figure \ref{fig:ex_4C}.

\begin{figure*}[h]
	\centering
	\includegraphics[width = 0.8 \linewidth]{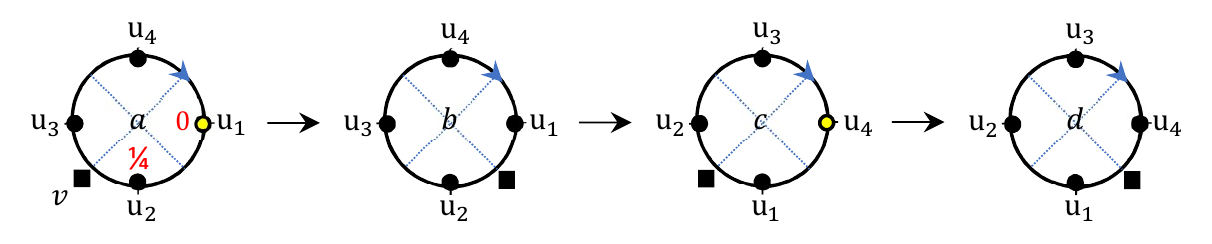}
	\caption{ An example of 4-coupled phase dynamics on a star with center $v=\blacksquare$ and leaves $=\bullet$. The circle represents the phase space $S^{1}=\mathbb{R}/\mathbb{Z}$ with two points $0$ and $1/4$ are indicated in red. Nodes revolve on the circle in clockwise and blinking nodes are indicated as yellow. In every 1/4 second, one of the leaves blink and pulls the center by 1/4 in phase, resulting in a non-synchronizing orbit. 
	}
	\label{fig:ex_4C}
\end{figure*}

Our first main result is stated in the following theorem:

\begin{theorem}\label{4treethm}
	Let $T=(V,E)$ be a finite tree with diameter $d$. 
	\vspace{0.1cm}
	\begin{description}[noitemsep]
		\item[(i)] If $T$ has maximum degree $\le 3$, arbitrary phase configuration on $T$ synchronizes by time $51d$.
		\vspace{0.1cm}
		\item[(ii)] If $T$ has maximum degree $\ge 4$, then there exists a non-synchronizing phase configuration on $T$. 
	\end{description}
\end{theorem}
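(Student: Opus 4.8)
\textbf{Part (ii)} is the easy direction, and I would obtain it by scaling up the non‑synchronizing orbit of Figure~\ref{fig:ex_4C}. Let $v\in V$ have $\deg(v)=k\ge 4$ with neighbors $u_{1},\dots,u_{k}$, and let $T_{1},\dots,T_{k}$ be the connected components of $T-v$, with $u_{i}\in V(T_{i})$. Initialize each $T_{i}$ internally synchronized (all of its nodes at one common phase), choosing these common phases so that $u_{1},\dots,u_{4}$ blink on the four cosets $0,1/4,1/2,3/4$ of $\tfrac14\mathbb{Z}$ modulo $1$ (the remaining $u_{i}$, if any, arbitrary), and put $\phi_{v}(0)\in(0,1/4]$. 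Two observations then finish the argument. First, a component all of whose nodes share a phase stays internally synchronized forever and blinks exactly once per second, since at the common blink all of its nodes sit at phase $0$ and $f_{0}(0)=0$; hence some $u_{i}$ with $i\le 4$ blinks in each $1/4$‑second subinterval and pulls $v$. Second, $v$ is perpetually inhibited: from a phase in $(0,1/4]$ it free‑runs $1/4$ second into $(1/4,1/2]$ and is then pulled back into $[0,1/4]$ by $f_{0}$, and this repeats forever, so $v$ never blinks. Consequently no $T_{i}$ ever receives a pulse from $v$, each $T_{i}$ evolves in isolation, and $\phi_{u_{1}}(t)\neq\phi_{u_{2}}(t)$ for all $t$, so the trajectory never synchronizes.

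\textbf{Part (i)} is the substantive claim, and the plan is to track a scalar \emph{width} of the configuration and show it contracts linearly in time. First I would pass to the relative circular representation of Section~\ref{Section:Relative circular representation}: along any path $P=v_{0}v_{1}\cdots v_{\ell}$ one lifts the phases to $\mathbb{R}$ with consecutive lifts differing by less than $1$ in absolute value, defines the width of $P$ at a given time as the total variation of this lift, and takes the width of a branch or of $T$ to be the maximum of the widths of the paths it contains. The two structural inputs are the branch‑width lemma (Lemma~\ref{lemma:branchwidth}), which I expect to say that this width is non‑increasing along the dynamics, and the branch‑excitation estimate (Proposition~\ref{prop:branch_excitation}), which is where $\Delta\le 3$ enters decisively: a node with at most three neighbors cannot be pulled back by $1/4$ often enough to keep its phase below $1/2$ forever — three pulses spaced by $1/3$ second cancel only $3/4$ of the $1$ second of free rotation — so every node keeps blinking at a controlled rate and the tree never fragments into non‑communicating pieces, the obstruction flagged in the introduction. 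Granting these, fix a longest path $P^{\ast}$ with $|E(P^{\ast})|=d$; the initial width of $T$ is $O(d)$. The endgame, once a branch's phases are confined to a short arc, is handled by the three local‑limit‑cycle lemmas (Lemmas~\ref{lemma:branch_attraction}, \ref{lemma:branchorbit_a}, \ref{lemma:branchorbit_b}), which I expect classify the periodic orbits of the small pendant pieces (a leaf, or a short path, attached at a degree‑$\le 3$ vertex) and show that the only ones are synchronized and that they are attracting. The conclusion would then be a drift argument: over every time window of some fixed constant length, either $T$ is already synchronized or its width drops by at least a fixed positive amount; since synchronization is precisely width $0$, the dynamics synchronizes in $O(d)$ seconds, and a careful accounting of the window length, the per‑window decrement, and the initial width sharpens $O(d)$ to $51d$.

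The step I expect to be hard is the contraction itself, i.e.\ excluding long plateaus of the width functional. Monotonicity (Lemma~\ref{lemma:branchwidth}) alone does not force synchronization, because one must rule out persistent non‑synchronized limit cycles living on the tree, and this is exactly where $\Delta\le 3$ together with the local analysis of Section~\ref{section:locallemmas} has to do the real work — with the degree‑$3$ branch vertices, at which a node is pulled alternately by two distinct descendant branches, demanding the most delicate case analysis. Turning the resulting $O(d)$ bound into the explicit constant $51$ is a secondary but genuinely fiddly bookkeeping task, since it requires propagating the window length and the per‑window decrement through all of these local cases.
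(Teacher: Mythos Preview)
Your Part~(ii) is correct and essentially identical to the paper's argument.

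Your Part~(i) rests on a mischaracterization of the paper's machinery and of the proof strategy itself. The branch width lemma (Lemma~\ref{lemma:branchwidth}) is \emph{not} a monotonicity statement for any global width functional; it is a local restriction lemma: once a branch $B$ has $\omega_B<1/4$, its leaves never again pull the center, so the dynamics on $T$ restricts to $T-L$. Likewise Proposition~\ref{prop:branch_excitation} is not the statement that $\Delta\le 3$ guarantees blinking; it concerns the \emph{adaptive} coupling and says that once the excitation event $E_v(t)$ fires at the center of a branch, the branch width drops below $1/4$ within constant time. The width used throughout the paper is the arc-length of the shortest arc containing all phases, not a total-variation quantity along paths, and there is no global drift lemma of the kind you describe: the paper's width lemma (Lemma~\ref{widthlemma}) only kicks in once $\omega<1/2$, and the whole point of the local analysis is to \emph{get} there.

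The actual architecture is induction on the diameter via pruning, not contraction of a scalar. The key lemma (Lemma~\ref{key}) shows that every \emph{terminal} branch $B$ satisfies $\omega_B(t_1^+)<1/4$ for some $t_1\le D_\Delta$ (with $D_\Delta=94$ when $\Delta\le 3$); by Lemma~\ref{lemma:branchwidth} one then deletes all leaves of all terminal branches, obtaining a subtree $T'$ of diameter $d-2$ on which the dynamics restricts, and applies the induction hypothesis. The local-limit-cycle lemmas (\ref{lemma:branch_attraction}, \ref{lemma:branchorbit_a}, \ref{lemma:branchorbit_b}) are not an ``endgame'' analysis; they are the engine behind Lemma~\ref{key}, classifying the only ways a branch can avoid $\omega_B<1/4$ and showing that those local cycles are incompatible with the tree structure at the root. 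Finally, Part~(i) is not proved directly: Proposition~\ref{prop:statebasic}~(iii) reduces the 4-coupling to the adaptive 4-coupling when $\Delta\le 3$, so Theorem~\ref{4treethm}~(i) follows from Theorem~\ref{A4Ctreethm}, whose constant $C_\Delta=51$ in that regime is where the $51d$ comes from.
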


\vspace{0.5cm}
\subsection{The adaptive 4-coupling.}

Next, we extend the 4-coupling in order to surpass its degree constraint as stated in Theorem \ref{4treethm} (ii). Roughly speaking, the idea is to append each oscillator with an auxiliary state variable, according to which the input is throttled. Here is a brief sketch. 
\vspace{0.1cm}
\begin{description}
	\item{$\bullet$} When `rested', each oscillator follows the 4-coupling.  
	\vspace{0.1cm}
	\item{$\bullet$} If a rested oscillator gets pulled too many times in a moving time window of length 1 second, then it becomes `excited' and enters `refractory period' right on. 
	\vspace{0.1cm}
	\item{$\bullet$} During refractory period, each oscillator ignores all pulses and becomes rested again upon blinking twice.  
\end{description}
\vspace{0.1cm}

To rigorously define this adaptive pulse-coupling, we may extend the state space of each node to $\Omega:=S^{1}\times S^{1} \times (\{ 1,3 \}\times \mathbb{Z}_{2}\times \mathbb{Z}_{4})\times \mathbb{Z}_{3}$. A \textit{joint configuration} at time $t$ is a map $\Sigma_{\bullet}(t):V\rightarrow \Omega$, $\Sigma_{\bullet}(t):=(\phi_{\bullet}(t),\beta_{\bullet}(t),\mu_{\bullet}(t),\sigma_{\bullet}(t))$.  An \textit{event} is a subset $A\subseteq \Omega^{2}$. For each $v\in V$ and $t\ge 0$, denote $\mu_{v}(t)=(\mu_{v}^{1}(t),\mu_{v}^{2}(t),\mu_{v}^{3}(t))$ and define the following events 
\begin{eqnarray*}
	E_{v}(t) &=& P_{v}(t)  \cap  \{ \mu_{v}^{1}(t) = \mu_{v}^{3}(t) \}\\
	J_{v}(t) &=& B_{v}(t) \cup  P_{v}(t) \cup \{ \beta_{v}(t)\in \{1/4,1 \}  \}.
\end{eqnarray*}
The \textit{adaptive 4-coupling} is defined according to the following equations 
\begin{equation}\label{eq:A4C_evolution}
\begin{cases}
\dot{\Sigma}_{v}(t)\equiv (1,1,0,0,0,0) & \text{if $J_{v}(t)$ does not occur}\\ 
\Sigma_{v}(t^{+}) = \mathcal{F}(\Sigma_{v}(t)) & \text{if $J_{v}(t)$ occurs},
\end{cases}
\end{equation}
where the `joint response curve' $\mathcal{F}:\Omega\rightarrow \Omega$ will be described below. Note that \eqref{eq:A4C_evolution} determines a unique \textit{joint trajectory} $(\Sigma_{\bullet}(t))_{t\ge 0}$ for each initial joint configuration $\Sigma_{\bullet}(0)$, where $\Sigma_{v}(t)-(t,t,0,0,0,0)$ is a left-continuous step function on $\Omega$.

The last component $\sigma_{v}(t)$ describes the \textit{state} of $v$ at time $t$, according to which $v$ adjusts its PRC. Namely, we say a node $v\in V$ is \textit{rested} at time $t$ if $\sigma_{v}(t)=0$, \textit{refractory} at time $t$ if $\sigma_{v}(t)\in \{1,2\}$, and \textit{excited} at time $t$ if $\sigma_{v}(t)=0$ and $\sigma_{v}(t^{+})=1$. This state variable jumps (in $\mathbb{Z}_{3}$) as 
\begin{equation*}
\sigma_{v}(t^{+}) - \sigma_{v}(t) = \mathbf{1}\big[ \left( \{ \sigma_{v}(t)=0\} \cap E_{v}(t)  \right) \cup \left( \{\sigma_{v}(t)=1\} \cap B_{v}(t)  \right) \cup \left( \{\sigma_{v}(t)=2\} \cap B_{v}(t) \right) \big].
\end{equation*}
On the other hand, the phase variable $\phi_{v}(t)$ jumps according to  
\begin{equation}
\phi_{v}(t^{+}) = \begin{cases}
\phi_{v}(t) & \text{if $R_{v}(t)$ does not occur} \\
F(\phi_{v}(t),\sigma_{v}(t)) & \text{if $R_{v}(t)$ occurs},
\end{cases}
\end{equation}
where $F:[0,1]\times \mathbb{Z}_{3} \rightarrow [0,1]$ is the following interpolation between the 4-coupling PRC $f_{0}$ and the refractory PRC $x\mapsto x$: 
\vspace{-0.1cm}
\begin{equation*}
F(x,s)= f_{0}(x)\cdot \mathbf{1}\{ s=0 \}   +   x\cdot \mathbf{1}\{s\ne 0 \}.
\end{equation*}

The other variables $\beta_{v}(t)$ and $\mu_{v}(t)$ record some local information during a moving window since the last blinking time, which is needed to determine the `excitation' event $E_{v}(t)$. Their verbal interpretations are as follows. 
\vspace{0.1cm}
\begin{description}
	\item{$\bullet$} $\beta_{v}(t)=$ time lapse modulo 1 since the last time that $v$ blinks
	\vspace{0.1cm}
	\item{$\bullet$} $\mu_{v}^{1}(t)=3-2\cdot \mathbf{1}(\text{$\sigma_{v}=2$ at the latest blinking time of $v$})$ 
	\vspace{0.1cm}
	\item{$\bullet$} $\mu_{v}^{2}(t) = \mathbf{1}(\text{$B_{v}(s)$ did not occur during $ [t-1/4,t)$})$
	\vspace{0.1cm}
	\item{$\bullet$} $\mu_{v}^{3}(t) = \#$ of occurrences of $P_{v}(s)\cap \{ \mu_{v}^{2}(s)=1 \}$ or $P_{v}(s)\cap \{ \mu_{v}^{2}(s)=0 \}\cap \{\beta_{v}(t)=1/4 \}$  (with upper cap of 3) since the last time that $\beta_{v}(t')=0$.
\end{description}
\vspace{0.1cm}
Hence, in words, the excitation event $E_{v}(t)$ is the event of being pulled when the accumulated pull count $\mu_{v}^{3}(t)$ is already matching the `dynamic threshold' $\mu_{v}^{1}(t)$. The pseudocode given in Algorithm 1 below describes the precise definition of the jumping rule $(\beta_{v}(t), \mu_{v}(t))\mapsto (\beta_{v}(t^{+}), \mu_{v}(t^{+}))$ upon $J_{v}(t)$. See Figure \ref{fig:ex_A4C} for an example of the adaptive 4-coupling dynamics.

\begin{algorithm}\label{algorithm:A4C}
	\caption{The adaptive 4-coupling}
	\begin{algorithmic}[1]
		\State \textbf{Variables:} 
		\State  \qquad $\phi_{v}(t)\in S^{1}:$ phase of node $v$
		\State \qquad $\beta_{v}(t)\in S^{1}:$ time lapse since the last blinking time of $v$ modulo $1$
		\State  \qquad $\mu_{v}(t)=(\mu_{v}^{1}(t),\mu_{v}^{2}(t),\mu^{3}_{v}(t))\in \{ 1,3 \}\times \mathbb{Z}_{2} \times \mathbb{Z}_{4}$: memory variable of node $v$
		\State \qquad $\sigma_{v}(t)\in \mathbb{Z}_{3}$: state variable of node $v$  
		\State \qquad $\Sigma_{v}(t)=(\phi_{v}(t),\beta_{v}(t),\mu_{v}^{1}(t),\mu_{v}^{2}(t),\mu_{v}^{3}(t),\sigma_{v}(t)):$ joint state of node $v$ 
		\State \textbf{Events:}
		\State \qquad $B_{v}(t)=\{ \phi_{v}(t^{-})=1 \}\cap \{\phi_{v}(t)=0\}$ 
		\State \qquad $P_{v}(t) =  \{ \text{$\exists u\in N(v)$ s.t. $B_{u}(t)$ occurs} \} \cap \{ 0< \phi_{v}(t) \le 1/2 \}$
		\State \qquad $J_{v}(t)=B_{v}(t)\cup P_{v}(t)\cup \{ \beta_{v}(t) \in \{ 1/4, 1 \} \}$
		\State \textbf{While $J_{v}(t)$ does not occur:}
		\State \qquad \textbf{Do} $\dot{\Sigma}_{v}(t) \equiv (1,1,0,0,0,0)$		
		\State \textbf{Upon $J_{v}(t)$:}
		\State \qquad \textbf{If $B_{v}(t)$ occurs:}
		\State \qquad \quad \textbf{Do} $\Sigma_{v}(t^{+})=(0,\, 0,\, 3-2\cdot \mathbf{1}(\sigma_{v}(t)=2),\, 0,\, 0,\, \sigma_{v}(t)+\mathbf{1}(\sigma_{v}(t)\ne 0))$
		\State \qquad \textbf{Else if $\{ \beta_{v}(t)=1 \}\setminus (B_{v}(t)\cup P_{v}(t))$ occurs:} 
		\State \qquad\quad \textbf{Do} $\Sigma_{v}(t^{+})=(\phi_{v}(t),\, 0,\, \mu_{v}^{1}(t),\, 1,\, 0,\, \sigma_{v}(t))$
		\State \qquad \textbf{Else if $\{ \beta_{v}(t)=1/4 \}\setminus (B_{v}(t)\cup P_{v}(t))$ occurs:} 
		\State \qquad \quad \textbf{Do} $\Sigma_{v}(t^{+})=(\phi_{v}(t),\, 1/4,\, \mu_{v}^{1}(t),\, 1,\, \mu_{v}^{3}(t),\, \sigma_{v}(t))$
		\State \qquad \textbf{Else ($P_{v}(t)$ occurs):}
		\State \qquad \quad \textbf{Do} $\phi_{v}(t^{+})=\phi_{v}(t)\cdot \mathbf{1}(\sigma_{v}(t)\ne 0) +   (\phi_{v}(t)-1/4)\cdot \mathbf{1}(1/4 < \phi_{v}(t) \le 1/2)\mathbf{1}(\sigma_{v}(t)= 0)$ 
		\State \qquad\quad \textbf{Do} $(\beta_{v}(t^{+}), \mu_{v}^{1}(t^{+}), \sigma_{v}(t^{+}))=(\beta_{v}(t),\, \mu_{v}^{1}(t),\,\sigma_{v}(t)+\mathbf{1}(\sigma_{v}(t)=0)\mathbf{1}(\mu_{v}^{1}(t)=\mu_{v}^{3}(t))  )$
		\State \qquad\quad \textbf{If} $\mu_{v}^{2}(t)=0$
		\State \qquad\quad \qquad \textbf{Do}  $\mu_{v}^{2}(t^{+})=\mu_{v}^{3}(t^{+})=\mathbf{1}(\beta_{v}(t)=1/4)$
		\State \qquad\quad \textbf{If} $\mu_{v}^{2}(t)=1$
		\State \qquad\quad \qquad \textbf{Do} $\mu_{v}^{2}(t^{+})=1$ and $\mu_{v}^{3}(t^{+})=[\mu_{v}^{3}(t)+\mathbf{1}(\mu_{v}^{3}(t)\ne 3)] \mathbf{1}(\beta_{v}(t)=1)$
	\end{algorithmic}
\end{algorithm}

\begin{figure*}[h]
	\centering
	\includegraphics[width = 1 \linewidth]{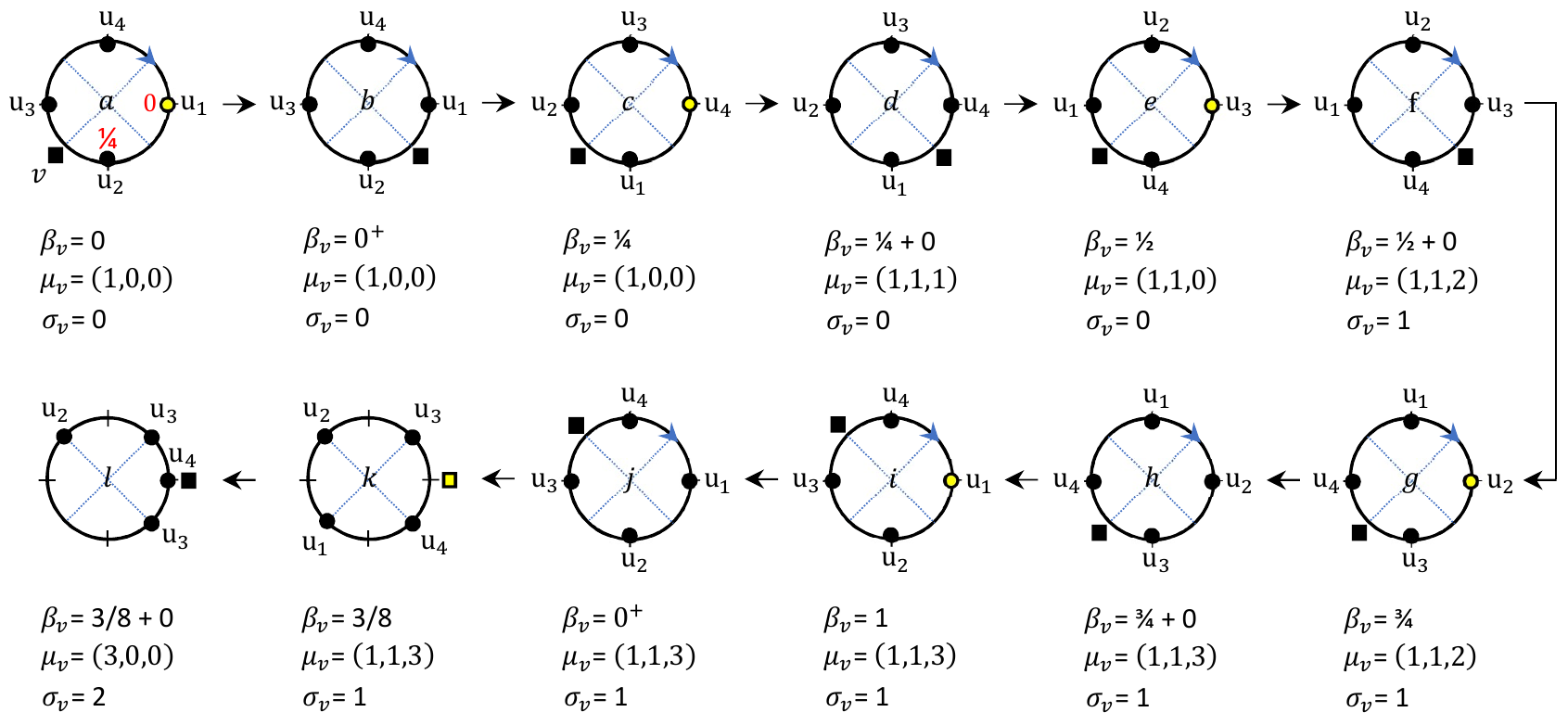}
	\vspace{-0.3cm}
	\caption{ An example of adaptive 4-coupling dynamics on a star with center $v=\blacksquare$ and leaves $u_{i}=\bullet$ for $i\in \{1,2,3,4\}$. The circle represents the phase space $S^{1}=\mathbb{R}/\mathbb{Z}$ with two points $0$ and $1/4$ are indicated in red. Nodes revolve on the circle in clockwise and blinking nodes are indicated as yellow. Initially $\sigma_{\bullet}(0)\equiv 0$ so $\sigma_{u_{i}}(t)\equiv 0$ for all $t\ge 0$; other three variables for $v$ are indicated below $a$. After following the same phase dynamics as in Figure \ref{fig:ex_4C} through $a\rightarrow e$, $v$ gets excited in $f$. Hence it ignores the pulse from $u_{2}$ as in $g\rightarrow h$. Then $v$ blinks in $k$, updates to state $\sigma_{v}=2$, refreshes $\mu_{v}$ to $(3,0,0)$, and pulls the two leaves $u_{1}$ and $u_{4}$. 
	}
	\label{fig:ex_A4C}
\end{figure*}   

We remark that the auxiliary dynamics of the adaptive 4-coupling is chosen carefully to satisfy some desired properties. First, the adaptive 4-coupling reduces to the 4-coupling if the underlying graph $G$ has maximum degree $\le 3$ and the auxiliary variables are initialized properly (Proposition \ref{prop:statebasic} (iii)). For that, we need to make sure that $E_{v}(t)$ never occurs under such condition. Second, $E_{v}(t)$ is invoked often enough so that large degree cases reduce to degree $\le 3$ case in a constant time (Propositions \ref{prop:3branch_gen_to_right_half}). Third, once $E_{v}(t)$ is invoked when rested $\sigma_{v}(t)=0$, the dynamic threshold $\mu_{v}^{1}(t)\in \{ 1,3\}$ is lowered to $1$ so that it is easier for $E_{v}(t)$ to occur once more, which breaks all local symmetries in constant time (Proposition \ref{prop:branch_excitation}). Fourth, $E_{v}(t)$ it is invoked not so often that a node of large degree behaves similarly as in the 4-coupling dynamics every once in a while (Proposition \ref{prop:E_v(t)_needs_enough_pull}). This property is used crucially in the proof of Lemma \ref{lemma:branchorbit_b}. Lastly, with all of the above mentioned properties, the event $E_{v}(t)$ is defined by only using local pulse communication in order to maximize the advantage of pulse-couplings such as scalability, robustness, and efficiency.

Our main result in this paper is stated in the following theorem:

\begin{theorem}\label{A4Ctreethm}
	Let $T=(V,E)$ be a finite tree with diameter $d$ and maximum degree $\Delta$. Then arbitrary initial  joint configuration on $T$ synchronizes under the adaptive $4$-coupling by time $C_{\Delta}d$, where $C_{\Delta}=51+32\cdot \mathbf{1}(\Delta\ge 4)$.
\end{theorem}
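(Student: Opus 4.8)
The plan is to reduce Theorem \ref{A4Ctreethm} to the degree $\le 3$ case of Theorem \ref{4treethm} by exploiting the auxiliary state machinery, and then close the loop with the three local-limit-cycle lemmas. I would first pass to the relative circular representation of Section \ref{Section:Relative circular representation}, which recasts the phase dynamics on $T$ in terms of gaps along paths, so that ``synchronization'' becomes ``all gaps vanish.'' The key structural input is Proposition \ref{prop:branch_excitation}: once a node $v$ of degree $\ge 4$ gets pulled enough, the event $E_{v}(t)$ fires, $v$ becomes excited, and its dynamic threshold $\mu_{v}^{1}$ drops to $1$; after that, $E_{v}(t)$ fires easily and repeatedly, and the carefully tuned refractory dynamics force $v$ into a periodic regime in which it blinks on a fixed schedule independent of its high-degree neighborhood. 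I would phrase this as: after a time $O(1)$ (absorbed into the additive $32$ in $C_\Delta$), every high-degree node is ``entrained,'' meaning it effectively disconnects its incident subtrees and feeds each of them a clean periodic pulse train, so that the dynamics on $T$ decouples into a collection of interacting ``branches'' of effective maximum degree $\le 3$.

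Concretely, the steps in order: (1) Use Lemma \ref{lemma:branchwidth} and Proposition \ref{prop:branch_excitation} to show that within constant time every node of degree $\ge 4$ enters the excited/refractory cycle and stays there, so its phase obeys the identity PRC $x\mapsto x$ except at blinking; quantify the constant and check it contributes at most $32d$ (in fact a constant, but stated against $d$). (2) Invoke Propositions \ref{prop:3branch_gen_to_right_half} and \ref{prop:E_v(t)_needs_enough_pull} to confirm the excitation events recur on the right schedule — often enough that the reduction to degree $\le 3$ is genuine, but not so often that a high-degree node never transmits — so that each branch sees its high-degree endpoints as boundary oscillators behaving as in the 4-coupling. (3) Apply Lemmas \ref{lemma:branch_attraction}, \ref{lemma:branchorbit_a}, and \ref{lemma:branchorbit_b} to each branch to show the branch-local configuration is attracted to, and then absorbed into, the synchronized limit cycle within time $51 \cdot (\text{branch diameter})$, mirroring the proof of Theorem \ref{4treethm}(i). (4) Glue: since the branch diameters and the entrainment delays all add up along any path in $T$ to at most $d$, the global synchronization time is at most $51d + 32d = 83d = C_\Delta d$ when $\Delta \ge 4$, and $51d$ when $\Delta \le 3$ by Proposition \ref{prop:statebasic}(iii) reducing the adaptive coupling to the plain 4-coupling.

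The main obstacle, I expect, is step (3) together with the gluing in step (4): one must verify that the branch-by-branch analysis composes without the boundary behavior of one branch corrupting the inductive hypothesis for an adjacent branch. The delicate point is that a high-degree node sits on the boundary of several branches simultaneously, and the timing of its periodic blinks must be simultaneously compatible with the ``attraction then orbit'' argument on every incident branch; this is presumably exactly why the auxiliary variables $\mu_v^2, \mu_v^3$ and the $\beta_v \in \{1/4, 1\}$ sampling were designed as they are, and why Lemma \ref{lemma:branchorbit_b} is stated separately from Lemma \ref{lemma:branchorbit_a}. I would handle this by a careful bookkeeping of event times relative to a fixed reference blink, showing the per-branch convergence windows can be chosen with disjoint-in-effect influence, and by an induction on the BFS layers of $T$ rooted at a diametral endpoint, so that each layer's contribution to the convergence time telescopes to the claimed linear bound in $d$. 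The constant-factor optimality claimed in the introduction is not needed for the theorem itself, so I would not pursue it here; the additive $32d$ is almost surely loose and only its finiteness (and the absence of any $\Delta$-dependence) matters.
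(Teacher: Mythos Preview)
Your proposal rests on a misreading of what the adaptive mechanism does. You assert that after constant time ``every node of degree $\ge 4$ enters the excited/refractory cycle and stays there,'' effectively turning high-degree nodes into periodic pacemakers that decouple their incident subtrees. That is not what happens: by the definition of $\sigma_v$, a node that gets excited passes through states $1,2$ and returns to $0$ after two blinks (within $\le 2$ seconds), and Proposition~\ref{prop:branch_excitation} says precisely that a single excitation drives the \emph{branch width} below $1/4$ in constant time, not that the node remains refractory. So step~(1) of your plan fails, and with it the picture of a global reduction to a degree~$\le 3$ tree.

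The paper's argument is structurally different. It does not reduce the maximum degree; it reduces the \emph{diameter}. The key lemma (Lemma~\ref{key}) shows that every \emph{terminal} branch $B$ achieves $\omega_B<1/4$ within a fixed time $D_\Delta$; then the branch width lemma (Lemma~\ref{lemma:branchwidth}) lets one delete the leaves of all terminal branches without affecting the remaining dynamics. This produces a subtree $T'$ of diameter $d-2$, and one inducts on $d$. The role of Lemmas~\ref{lemma:branch_attraction}, \ref{lemma:branchorbit_a}, \ref{lemma:branchorbit_b} is not a per-branch synchronization with time proportional to branch diameter, but to rule out the two conditional local limit cycles (Figure~\ref{fig:2branch_limit}) on a single terminal branch plus its root --- this is where the terminality assumption and the incompatibility of the root's required behavior across the two cycles are used. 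The constant $C_\Delta$ arises because each diameter-$2$ reduction costs $D_\Delta+O(1)$, so $C_\Delta \ge (D_\Delta+8)/2$ suffices; there is no BFS layering or telescoping over paths. Your step~(4) gluing worry is therefore aimed at the wrong object: the induction is on diameter via leaf-pruning, not on layers via boundary compatibility.
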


\vspace{0.3cm}
\section{Related works}
\label{Section:Related works}

Three types of questions are usually considered in the study of PCOs as well as clock synchronization algorithms: 
\vspace{0.1cm}
\begin{description}[noitemsep]
	\item{Q1.} Given a coupling, on what network topologies is synchrony guaranteed to emerge from all (or almost all) initial configurations?
	\vspace{0.1cm} 
	\item{Q2.} Given a coupling, what conditions on initial configurations can guarantee the emergence of synchrony on arbitrary network topology? 
	\vspace{0.1cm}
	\item{Q3.} What couplings can guarantee the emergence of synchrony on arbitrary network topology and arbitrary initial configuration?
\end{description}
\vspace{0.1cm}
Traditionally PCO literature focuses on Q1, whereas in distributed algorithms literature Q3 is usually asked with respect to some desired properties and performance. Q2 is often considered in the control theory literature, where PCO studies are applied to handle more realistic issues such as signal loss or propagation delay. 

For Q2, there is a fundamental observation which has been made under different settings in the literature. Namely, for arbitrary connected underlying graph, synchronization is guaranteed if all initial phases reside in a half of the oscillation cycle $S^{1}$. The key idea is that in spite of the cyclic hierarchy in the phase space, such concentration gives a linear ordering on the oscillator phases (e.g., from the most lagging to the most advanced). Pulse-couplings under mild conditions respect such monotonicity and contracts the phase configuration toward synchrony. (See Lemma \ref{widthlemma} for our version.) It is known that variants of this lemma hold against propagation delays, faulty nodes, and temporally changing topologies \cite{nishimura2011robust, klinglmayr2012guaranteeing, proskurnikov2015synchronization, nunez2015synchronization}, and are commonly used in multi-agent consensus problems \cite{moreau2005stability, papachristodoulou2010effects, chazelle2011total}.

While the methods based on concentration condition could be applied once the system is nearly synchronized or to maintain synchrony against weak fluctuation, a fundamental question must be addressed: \textit{how can we drive the system close to synchrony from an arbitrary initial configuration in the first place?} This is what Q1 focuses on, which has been answered for some classes of pulse-couplings mainly on complete (all-to-all) graphs or cycles. In their seminal work, Mirollo and Strogatz \cite{mirollo1990synchronization} showed that an excitatory pulse-coupling on complete graphs synchronizes almost all initial configurations. A similar result was derived for an inhibitory pulse-couplings by Klingmayr and Bettstetter \cite{klinglmayr2012self}. For PCOs on cycle graphs, Wang, and Doyle \cite{nunez2015global} addressed Q1. More recently, these authors and Teel \cite{nunez2016synchronization} studied Q1 for PCOs on general topology assuming a global pacemaker. In Theorem \ref{4treethm} and \ref{A4Ctreethm}, we give an answer to Q1 in the case of the 4-coupling and adaptive 4-coupling on tree networks. 

The question Q3 is closely related to the concept of self-stabilization in theoretical computer science. A distributed algorithm is said to be \textit{self-stabilizing} if it recovers desired system configurations from arbitrary system configuration. This notion was first proposed by Dijkstra \cite{dijkstra1982self} as a paradigm for designing distributed algorithms which are robust under arbitrary transient faults. For the convenience in following discussions, we denote by $d$ and $\Delta$ the diameter and maximum degree of the underlying network, respectively. 

A popular approach in designing a clock synchronization algorithm solving Q3 is to use un unbounded memory for each node to `unravel' the cyclic phase space $S^{1}$ and construct an ever-increasing clock counter. As in the above mentioned technique assuming concentration condition, this effectively gives a global total ordering between local times. Then all nodes can tune toward the locally maximal time, for instance, so that the global maximum propagates and subsumes all the other nodes in $O(d)$ time. This idea dates back to Lamport \cite{lamport1978time}, and similar technique has been used in different contexts: for synchronous systems \cite{gouda1990stabilizing} and for asynchronous systems \cite{awerbuch2007time}. The biggest advantage of such approach includes independence of network topology and optimal time complexity of $O(d)$. However, they suffer when it comes to memory efficiency, and assuming unbounded memory on each node is far from practical, especially with the presence of faulty nodes \cite{ghosh2014distributed}.

Considerable amount of works were devoted to solve Q3 with bounded memory per node. Algorithms using $O(\log d)$ and $O(\log |V|)$ memory per node for synchronous \cite{arora1992maintaining} and asynchronous \cite{couvreur1992asynchronous} systems, respectively, are known. Solutions with bounded memory per node assuming some a priori knowledge about global quantities have been studied as well (e.g., \cite{herman2000phase, boulinier2004graph}). However, when \textit{scalability} becomes an important issue (e.g., for wireless sensor networks), it is highly desirable that an algorithm uses only $O(1)$ memory per node without any assumption on the network. Due to the problem of symmetry breaking, however, no such algorithm solving Q3 exists if nodes are indistinguishable (e.g., on rings \cite{dolev2000self}). One way out is to incorporate internal randomization until a desirable global configuration (e.g., concentrated ones) is reached eventually, from which synchrony is guaranteed (e.g., Theorem 5.1 of Lyu \cite{lyu2015synchronization} or Klinglmayr et al. \cite{klinglmayr2012guaranteeing}). However, such algorithms inevitably yield exponential running time, which makes them not ideal for applications. Our Corollary \ref{thm:clocksync} gives a randomized universal solution to Q3 with asymptotically linear worst case expected running time, using only $O(\log \Delta)$ memory per node.   

For synchronous systems with discrete phase clocks taking values from $\mathbb{Z}_{\kappa}$, a number of algorithms which are self-stabilizing on trees with constant memory per node are known: e.g., for $\kappa=3$ by Herman and Ghosh \cite{herman1995stabilizing}, for all odd $\kappa \ge 3$ by Boulinier et al. \cite{boulinier2006toward}. Upper bounds of $O(d)$ for convergence time is known for such algorithms. More recently, the author proposed a class of $\kappa$-state  inhibitory pulse-couplings which are called the \textit{firefly cellular automata} (FCAs) \cite{lyu2015synchronization}. In the reference and in \cite{lyu2016phase}, we showed that the $\kappa$-color FCA is self-stabilizing on finite paths for arbitrary $\kappa\ge 3$, and on finite trees if and only if $\Delta<\kappa\le 6$. The 4-coupling we introduced in this work is a continuous-state and asynchronous-update generalization of the 4-color FCA. That is, if the initial phases for the 4-coupling are discretized on a 1/4 grid points on $S^{1}$, i.e., $\phi_{v}(0)\equiv \phi_{u}(0) \mod 1/4$ for all $u,v\in V$, then the trajectory $(4\phi_{\bullet}(t)+1)_{t\ge 0}$ follows the 4-color FCA dynamics.

\vspace{0.3cm}

\section{Relative circular representation}
\label{Section:Relative circular representation}

In this section, we introduce an alternative representation of phase dynamics of PCOs. Let $G=(V,E)$ be a finite simple graph and consider a phase dynamics $(\phi_{\bullet}(t))_{t\ge 0}$ evolving via a given pulse-coupling.  Since the oscillators evolve on the unit circle $S^{1}$, it is natural to superpose all phases on a single unit circle, so that $|V|$ dots on $S^{1}$ evolve with unit speed adjusting their phase upon receiving pulses from their neighbors. We may choose clockwise orientation for oscillation, since we think of each oscillator as a local clock in network $G$. This way of representing phase dynamics is called the \textit{circular representation}, which we have already used in the examples given in Figure \ref{fig:ex_4C} and \ref{fig:ex_A4C}.

An alternative perspective, which is particularly useful in describing PCO dynamics, is to look at evolution of `relative phases' with respect to an imaginary reference oscillator which revolves the unit circle $S^{1}$ clockwise in unit speed independently. More precisely, fix $\alpha(0)\in S^{1}$ and for each $v\in V$ and $t\ge 0$, define the \textit{relative phase} $\Lambda_{v}(t)$ of $v$ at time $t$ by 
\begin{equation}
\Lambda_{v}(t) = \phi_{v}(t) + \alpha(0)- t \mod 1 \quad \text{$\forall v\in V$ and $t\ge 0$}.
\end{equation} 
Each map $\Lambda_{\bullet}(t):V\rightarrow S^{1}$ is called \textit{relative phase configuration} at time $t$. Then the given phase dynamics $(\phi_{\bullet}(t))_{t\ge 0}$ and the choice of $\alpha(0)\in S^{1}$ induces a unique trajectory $(\Lambda_{\bullet}(t))_{t\ge 0}$ of relative configurations. 

To visualize the relative phase dynamics, we introduce an imaginary node $\alpha$ called the \textit{activator}, which has constant phase $0$ for all times. Then its relative phase at time $t$ is simply 
\begin{equation*}
\alpha(t):=\Lambda_{\alpha}(t)=\alpha(0)-t \mod 1 \quad \text{$\forall t\ge 0$}.
\end{equation*} 
Note that a node $v$ blinks at time $t$ iff $\Lambda_{v}(t^{-})\equiv \alpha(t)\mod 1$. In other words, in the relative phase dynamics, the activator revolves the unit circle counterclockwise with unit speed and each oscillator blinks whenever it is caught up by the activator. Hence oscillators keep their relative phase constant, adjusting their relative phases upon receiving pulses. We call this representation of PCO dynamics the \textit{relative circular representation}. (See Figure \ref{fig:ex0} for example, in the case of the 4-coupling.)

\begin{figure*}[h]
	\centering
	\includegraphics[width = 0.9 \linewidth]{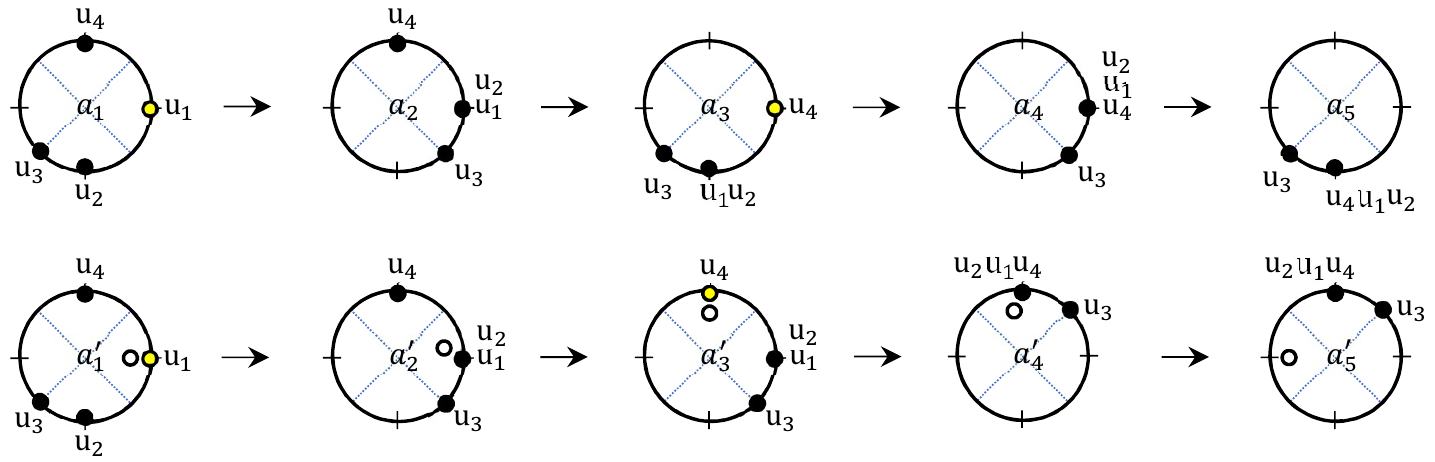}
	\caption{ An example of 4-coupled oscillator dynamics on complete graph $K_{4}$. In  circular representation ($a_{1}\rightarrow a_{5}$), oscillators evolve clockwise with unit speed. During $a_{1}\rightarrow a_{2}$, oscillator $u_{1}$ blinks pulls $u_{2}$ and $u_{3}$. Then $u_{4}$ blinks in $a_{3}$ and pulls all the other nodes by 1/4 in phase. During the following 1/4 second, all oscillators evolve with unit speed and end in $a_{5}$. In relative circular representation ($a_{1}\rightarrow a_{5}$), the activator (shown in empty circle) revolves counterclockwise with constant unit speed. Oscillators blink whenever the activator is on them. For instance, $u_{1}$ blinks in $a_{1}'$ since the activator is on it, and the activator is slightly counterclockwise to $u_{1}$ and $u_{2}$ in $a_{2}$ to indicate an infinitesimal time lapse. 
	}
	\label{fig:ex0}
\end{figure*}

Next, we describe the 4-coupling and adaptive 4-coupling dynamics in terms of relative circular representation. The \textit{counterclockwise displacement} is a function $\delta:S^{1}\times S^{1}\rightarrow [0,1)$ such that 
\begin{equation*}
\delta(x,y) = x-y \mod 1,
\end{equation*}
which is the length of the counterclockwise arc from $x$ to $y$ on $S^{1}$. Given a time evolution of relative phase configurations $(\Lambda_{\bullet}(t))_{t\ge 0}$, we abbreviate $\delta_{t}(u,v):=\delta(\Lambda_{u}(t),\Lambda_{v}(t))$. It is straightforward to verify that the 4-coupling evolves relative phase configurations as follows: 

\begin{equation}\label{eq:relative_evolution_4C}
\begin{cases}
\quad\, \alpha(t) = \alpha(0) - t \\
\Lambda_{v}(t^{+}) = \Lambda_{v}(t)-1/4  
& \text{if $\exists u\in N(v)$ s.t. $\Lambda_{u}(t)=\alpha(t)$ and $1/4\le \delta_{t}(v,u)\le 1/2$} \\   
\Lambda_{v}(t^{+}) = \Lambda_{u}(t)& \text{if $\exists u\in N(v)$ s.t. $\Lambda_{u}(t)=\alpha(t)$ and $0<\delta_{t}(v,u)< 1/4$} \\ 
\Lambda_{v}(t^{+}) = \Lambda_{v}(t) & \text{otherwise},
\end{cases}
\end{equation}	
where all subtractions are taken modulo 1.

For a verbal description, we introduce some terminologies. We say $u$ is \textit{left} to $v$ at time $t$ if $\delta_{t}(u,v)\in (0,1/2)$ and \textit{opposite} to $v$ at time $t$ if $\delta_{t}(u,v)=\delta_{t}(v,u)=1/2$. We say $u$ is \textit{right} to $v$ at time $t$ if $v$ is left to $v$ at time $t$. Note that $u$ is left (resp., right) to $v$ at time $t$ iff it is ahead (resp., behind) of $v$ in the oscillation cycle at time $t$. For instance, in Figure \ref{fig:ex0} $a_{1}'$, $u_{2}$ and $u_{3}$ are left to $u_{1}$, and $u_{2}$ is opposite to $u_{4}$.

Now, (\ref{eq:relative_evolution_4C}) says that each vertex $v$ gets pulled whenever there is a right or opposite blinking neighbor $u$; it shifts its relative phase by $-1/4$ (counterclockwise by $1/4$) if $\delta_{t}(v,u)\in (1/4,1/2]$, and merges with $\Lambda_{u}(t)$ if $\delta_{t}(v,u)\in (0,1/4]$ (see Figure \ref{fig:ex0} $a_{1}'\rightarrow a_{2}'$). The latter `merging' feature is one of the distinct characteristics of the (adaptive) 4-coupling; it quickly reduces continuum number of possibilities into a few highly symmetric ones, enabling a combinatorial analysis of this continuous model. Also note that we have a freedom of choosing $\alpha(0)\in S^{1}$, indicating that the relative circular representation is symmetric under rotation. We will use this rotational symmetry to make a fixed oscillator to have a particular relative phase at a given time for our convenience.

\vspace{0.3cm}

\section{Preliminaries, the branch width lemma, and reduction by excitation}
\label{Section:The width lemma and the branch width lemma}

In this section, we establish some basic properties of the adaptive 4-coupling. Convergence to synchrony from concentrated phase configurations is shown in Lemma \ref{widthlemma}. We also establish its localized variant, Lemma \ref{lemma:branchwidth}, which will play a central role throughout this paper. Moreover, we also show that the occurrence of $E_{v}(t)$ effectively reduces local structures to consider in inductive argument (Proposition \ref{prop:branch_excitation}).

We first observe that in the adaptive 4-coupling dynamics, every node blinks at most once every second and at least once in every five seconds. For each $v\in V$ and any finite interval $I\subset [0,\infty)$, we introduce the \textit{total phase inhibition of $v$ during $I$}, which is defined by
\begin{equation}\label{eq:total_phase_inhibition}
f_{v}(I)=\sum_{\substack{ s\in I\\ \text{$P_{v}(s)$ occurs}   }} \phi_{v}(s^{+})-\phi_{v}(s),
\end{equation} 
where the summation is taken in $\mathbb{R}$ and not modulo 1. It is well-defined by the first part of Proposition \ref{prop:blinking} and $\deg(v)<\infty$. Also note that the summand takes value from $[-1/4,0]$ for all $s\ge 0$. 

\begin{proposition}\label{prop:blinking}
	Let $G=(V,E)$, $(\Sigma_{\bullet}(t))_{t\ge 0}$, and $(\Lambda_{\bullet}(t))_{t\ge 0}$ be as before. For any $v\in V$ and $t_{0}\ge 0$, $B_{v}(t)$ occurs at most once during $(t_{0},t_{0}+1]$ and at least once during $(t_{0},t_{0}+5]$.   
\end{proposition}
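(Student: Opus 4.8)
The plan is to prove both halves by analysing how the phase variable $\phi_v$ moves under the adaptive $4$-coupling, using that between jumps $\dot\phi_v \equiv 1$ and that each jump $\phi_v(t)\mapsto \phi_v(t^+)$ caused by $P_v(t)$ decreases $\phi_v$ by at most $1/4$ (and never increases it, since the coupling is inhibitory), while a jump caused by $B_v(t)$ resets $\phi_v$ from $1$ to $0$.

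\emph{At most once in $(t_0,t_0+1]$.} Suppose $B_v$ occurs at two times $t_1 < t_2$ in this interval, with no blinking strictly between them. Just after $t_1$ we have $\phi_v(t_1^+)=0$, and the only way to reach $\phi_v=1$ again (so that $B_v(t_2)$ can occur) is for $\phi_v$ to travel the full length of $[0,1]$. Since $\phi_v$ moves up at unit rate and every pull event only moves it \emph{down}, the elapsed time $t_2-t_1$ must be at least $1$; combined with $t_2 - t_1 < 1$ (both lie in a half-open interval of length $1$, and $t_1<t_2$ forces $t_2-t_1<1$ unless $t_1=t_0$, $t_2=t_0+1$, which I handle by noting $t_1>t_0$ since the interval is open on the left — actually I should be careful: if $t_1=t_0^+$ is excluded, the worst case is $t_2-t_1$ arbitrarily close to $1$ but, crucially, pulls make the required travel time strictly exceed the deficit, so equality is impossible). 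The clean way to phrase this: between two consecutive blinks the total upward travel of $\phi_v$ equals $1 - f_v(I) + (\text{amount descended}) \ge 1$, hence the time between consecutive blinks is $\ge 1$, which already rules out two blinks in any interval of length $1$ that is open at one end; I will also record for later use that two blinks must be separated by \emph{strictly} more than the time it takes to traverse $[0,1]$ with pulls, i.e. at least $1$.

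\emph{At least once in $(t_0,t_0+5]$.} This is the substantive half. The danger is precisely the one flagged in the introduction: a node that keeps getting pulled never advances. I would track the ``would-be phase'' and quantify the total inhibition $f_v(I)$ of \eqref{eq:total_phase_inhibition} over $I=(t_0,t_0+5]$. If $B_v$ never occurs on $I$, then $\phi_v$ is nondecreasing except at pull times, so over an interval of length $5$ starting from $\phi_v(t_0^+)\ge 0$ it would reach $5 - |f_v(I)| \ge 5 - |f_v(I)|$; blinking is forced unless $|f_v(I)| \ge 4$, i.e. at least $16$ pull events each of size $1/4$ (or more, in total magnitude $\ge 4$) occur on $I$. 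Now I invoke the throttling mechanism: once $\mu_v^3$ reaches the dynamic threshold $\mu_v^1\in\{1,3\}$, the next pull triggers $E_v(t)$, the node becomes excited/refractory ($\sigma_v \ne 0$), and during the refractory period \emph{all} pulls are ignored ($F(x,s)=x$ for $s\ne 0$), so $f_v$ accrues nothing until $v$ blinks twice and becomes rested again. Thus between consecutive blinks the node can only absorb a bounded number of ``effective'' pulls — at most $3$ before excitation (the value of $\mu_v^1$), hence at most $3\cdot\frac14 = \frac34$ of inhibition per inter-blink interval while rested, and $0$ while refractory. Since on a blink-free interval there is only one inter-blink stretch, $|f_v(I)| \le 3/4 < 4$ on $I$, a contradiction. (I will need Proposition~\ref{prop:statebasic}-type bookkeeping to confirm that $\mu_v^3$ is indeed incremented on every pull that actually inhibits, and that refractory $\Rightarrow$ no inhibition; these follow by reading off Algorithm~\ref{algorithm:A4C}.)

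\textbf{Main obstacle.} The delicate point is the interaction between the $\mu_v$ bookkeeping and the blink-free assumption: $\mu_v^3$ is said to count pulls ``since the last time $\beta_v(t')=0$,'' and $\beta_v$ resets at each blink, so on a blink-free interval $\mu_v^3$ never gets reset and the cap of $\mu_v^1\le 3$ genuinely bounds the number of inhibiting pulls — but I must double-check the two branches in Algorithm~\ref{algorithm:A4C} (the $\mu_v^2=0$ vs.\ $\mu_v^2=1$ cases and the $\beta_v\in\{1/4,1\}$ housekeeping events) to be sure no pull of magnitude $1/4$ escapes being counted, and that the possible $\beta_v=1/4$ reset does not silently re-arm the counter in a way that permits more than a bounded number of inhibitions. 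Once that accounting is pinned down, the arithmetic $3/4 < 4$ (indeed one could even afford the cruder bound $1 < 5$ coming from a single reset) closes the argument with room to spare, which is presumably why the stated window is a comfortable $5$ seconds rather than something tight.
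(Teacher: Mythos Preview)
Your argument for the ``at most once'' half is fine and matches the paper's one-line observation that the coupling is inhibitory.

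The ``at least once'' half has a genuine gap. Your central claim is that on a blink-free interval $\mu_v^3$ never gets reset, so at most three inhibiting pulls occur in total, giving $|f_v(I)|\le 3/4$. This is false: $\beta_v$ is the time since the last blink \emph{modulo $1$}, and the housekeeping event $\{\beta_v(t)=1\}\setminus(B_v\cup P_v)$ in Algorithm~1 resets both $\beta_v$ and $\mu_v^3$ to $0$ every second, blink or no blink (line~17). So the pull counter re-arms once per second, and your bound of $3/4$ is a \emph{per-second} bound, not a bound over the whole interval $I$. Worse, the naive patch ``$5\cdot\tfrac34<4$'' does not immediately work either, because during an initial stretch where $\mu_v^2=0$ (which can be inherited from the arbitrary state at $t_0$), pulls are not counted at all and inhibition in that stretch is not controlled by the $\mu$-bookkeeping.

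The paper's fix is exactly the accounting you flagged but did not carry out: first observe that if $E_v$ ever fires (or $\sigma_v\ne 0$) on $(t_0,t_0+4]$ then $v$ is refractory and blinks within one second; otherwise wait until the first time $t_1\le t_0+1$ that $\beta_v$ wraps, after which $\mu_v^2\equiv 1$ and every pull is counted. Then the per-second bound $f_v((t_1+k,t_1+k+1])\ge -3/4$ holds for $k=0,1,2$ (a fourth pull in any such window would force $\mu_v^3=3=\mu_v^1$ and fire $E_v$), so $\phi_v(t_1^++1)\ge 1/4$, $\phi_v(t_1^++2)\ge 1/2$, and from there $\phi_v$ rises unimpeded to $1$ within another $1/2$ second. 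This is the step your sketch is missing: tracking $\phi_v$ across successive one-second windows rather than bounding total inhibition globally.
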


\begin{proof}
	The first part follows since the adaptive 4-coupling is inhibitory. For the second part, suppose for contrary that $B_{v}(t)$ never occurs during some interval $(t_{0},t_{0}+5]$. Then necessarily $\sigma_{v}(t)\equiv 0$ and $E_{v}(t)$ never occurs during $(t_{0},t_{0}+4]$. Let $t_{1}=\inf\{ t_{0}< s \le t_{0}+1 \,:\, \beta_{v}(s^{-})=1 \}$. Then $\mu_{v}^{2}(t)\equiv 1$ during $(t_{1},t_{0}+4]\subseteq (t_{0}+1,t_{0}+4)$. Now we claim that $f_{v}((t_{1}+k,t_{1}+k+1])\ge -3/4$ for $k\in \{0,1,2\}$. Suppose for contrary that $f_{v}((t_{1}+k,t_{1}+k+1])< -3/4$ for some $k\in \{0,1,2\}$. Then $P_{v}(t)$ must occur at least four times during $(t_{1}+k, t_{1}+k+1]$. But then when the fourth $P_{v}(t)$ occurs we have $\mu_{v}^{3}(t)=3\ge \mu_{v}^{1}(t)$; hence $E_{v}(t)$ must occur during $(t_{1}+k, t_{1}+k+1]$, contrary to our assumption. This shows the claim. 
	
	Now by the claim $\phi_{v}(t_{1}^{+}+1)=\phi_{v}(t_{1}^{+})+1+f_{v}( (t_{1},t_{1}+1] )\ge 1/4$ and similarly $\phi_{v}(t_{1}^{+}+2)\ge 1/2$. So $\phi_{v}(t)$ increases to 1 with rate $1$ after time $t_{1}+2$, so $v$ blinks at time some time $\le t_{1}+2+1/2 \le t_{0}+4$, a contradiction.  
\end{proof}

Next, we infer some information about $f_{v}(I)$ from the state evolution of $v$. It will be used crucially in the proof of Lemma \ref{lemma:branchorbit_b} in Section \ref{section:locallemmas}.

\begin{proposition}\label{prop:E_v(t)_needs_enough_pull}
	Let $G=(V,E)$, $(\Sigma_{\bullet}(t))_{t\ge 0}$, and $(\Lambda_{\bullet}(t))_{t\ge 0}$ be as before. Let $v\in V$ and suppose $B_{v}(t_{0})$ occurs and $\sigma_{v}(t_{0}^{+})=0$ for some $t_{0}\ge 0$. Let $t_{i}:=\inf\{ t\ge t_{0}\,:\, \sigma_{v}(t)=i \}$ for each $i\in \{1,2\}$. 
	\vspace{0.1cm}
	\begin{description}
		\item[(i)]  If $B_{v}(t)$ does not occur during $(t_{0},t_{1}]$, then $f_{v}((t_{0},t_{1}])<-1/4$. 
		\vspace{0.1cm}
		\item[(ii)]  If $t_{2}\le t_{0}+3/2$, then $f_{v}((t_{0},t_{1}])\in [-1/2,-1/4)$. 
		\vspace{0.1cm} 		
	\end{description} 
\end{proposition}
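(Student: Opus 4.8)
The plan is to reduce both statements to tracking the single scalar $\phi_v$ along $(t_0,t_1]$ (and, for (ii), along $(t_1,t_2)$), using the auxiliary dynamics only to locate where an incoming pulse may increment the counter $\mu_v^3$. I start from two common observations: since $\sigma_v(t_0^+)=0$ and the state variable changes only at the events in its jump rule, $\sigma_v\equiv 0$ on $(t_0,t_1)$, so $v$ obeys the plain $4$-coupling PRC $f_0$ there; and the jump of $\sigma_v$ at $t_1$ must be the one caused by $\{\sigma_v=0\}\cap E_v$, i.e. $E_v(t_1)=P_v(t_1)\cap\{\mu_v^1(t_1)=\mu_v^3(t_1)\}$ occurs. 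In particular $P_v(t_1)$ occurs, so $\phi_v(t_1)\in(0,1/2]$, and since $\mu_v^1\in\{1,3\}$ we get $\mu_v^3(t_1)=\mu_v^1(t_1)\ge 1$.

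For part (i), assume $B_v$ does not occur on $(t_0,t_1]$. Then $\phi_v$ is never reset to $0$ by a blink on this interval, so $\phi_v(t_1^+)=(t_1-t_0)+f_v((t_0,t_1])$, and it suffices to show $\phi_v(t_1^+)<(t_1-t_0)-1/4$. The pull at $t_1$ (with $\sigma_v(t_1)=0$) always leaves $\phi_v(t_1^+)\le 1/4$, so the bound is immediate when $t_1-t_0>1/2$. When $t_1-t_0\le 1/2$ I use two facts. First, $t_1-t_0>1/4$: on $(t_0,t_1]$ one has $\beta_v(t)\equiv(t-t_0)\bmod 1$, and a pulse can increment $\mu_v^3$ only when $\mu_v^2(t)=1$ (or $\mu_v^2(t)=0$ and $\beta_v(t)=1/4$), which on $(t_0,t_1]$ cannot occur before $t_0+1/4$; hence $\mu_v^3(t_1)\ge1$ forces a counted pull at some $s\in[t_0+1/4,t_1)$, so $t_1>t_0+1/4$. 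Second, that pull at $s$ has $\phi_v(s)\in(0,1/2]$ and $\sigma_v(s)=0$, so it strictly decreases $\phi_v$, whence $f_v((t_0,t_1))<0$ and $\phi_v(t_1)<t_1-t_0$. Now conclude: if $\phi_v(t_1)\le1/4$ then $\phi_v(t_1^+)=0<(t_1-t_0)-1/4$; if $\phi_v(t_1)\in(1/4,1/2]$ then $\phi_v(t_1^+)=\phi_v(t_1)-1/4<(t_1-t_0)-1/4$. In both cases $f_v((t_0,t_1])<-1/4$.

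For part (ii), I first show that $t_2\le t_0+3/2$ rules out any blink on $(t_0,t_1]$; part (i) then applies and gives the upper bound $f_v((t_0,t_1])<-1/4$. A blink at $t_1$ is impossible since $E_v(t_1)$ needs $P_v(t_1)$, hence $\phi_v(t_1)>0$. If $v$ blinked at some $s\in(t_0,t_1)$, then $s\ge t_0+1$ since consecutive blinks are at least $1$ apart (Proposition \ref{prop:blinking}); but the blink that drives $\sigma_v\colon1\to2$ occurs at $t_2$ with $s<t_1<t_2\le t_0+3/2$, giving two blinks less than $1$ apart, contradicting Proposition \ref{prop:blinking}. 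For the lower bound, note $\sigma_v\equiv1$ on $(t_1,t_2)$, where the PRC is the identity, so $v$ ignores all pulses and $\phi_v$ rises at unit rate from $\phi_v(t_1^+)$ until it hits $1$, which is exactly the blink at $t_2$. Hence $t_2=t_1+\bigl(1-\phi_v(t_1^+)\bigr)$, so $\phi_v(t_1^+)=1-(t_2-t_1)\ge1-\bigl(\tfrac32-(t_1-t_0)\bigr)=(t_1-t_0)-\tfrac12$; combined with $\phi_v(t_1^+)=(t_1-t_0)+f_v((t_0,t_1])$ this gives $f_v((t_0,t_1])\ge-1/2$.

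The step I expect to require the most care is the pair of claims in part (i) about when a pulse counts toward $\mu_v^3$: they must be read off directly from the window conventions in Algorithm 1 — tracking $\beta_v,\mu_v^2,\mu_v^3$ through the instants $\beta_v\in\{1/4,1\}$ and through a pulse coinciding with such an instant — so that $\mu_v^3(t_1)\ge1$ genuinely forces both $t_1>t_0+1/4$ and a strict decrease of $\phi_v$ on $(t_0,t_1)$. Everything else is the short $\phi_v$-bookkeeping above.
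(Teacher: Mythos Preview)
Your proof is correct and follows essentially the same line as the paper's: both extract from the $\mu_v$-dynamics that a counted pull cannot occur before $\beta_v=1/4$, forcing enough phase inhibition for (i), and both derive (ii) by ruling out blinks on $(t_0,t_1]$, invoking (i), and bounding $f_v$ from below via the time constraint on $t_2$. Your case split on $t_1-t_0$ in (i) and your explicit blink-exclusion argument in (ii) are more carefully spelled out than the paper's terse treatment, but the strategy is the same.
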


\begin{proof}
	To show (i), note that since $B_{v}(t_{0})$ occurs, $\mu_{v}(t_{0}^{+})=(1+2\cdot \mathbf{1}[\sigma_{v}(t_{0})=0], 0, 0)$ and $\mu_{v}^{2}(t)\equiv 0$ during $(t_{0},t_{0}+1/4]$. Hence in order for $E_{v}(t_{1})$ to occur, $P_{v}(t)$ must occur at least twice during $(t_{0}+1/4,t_{1}]$. This yields $f_{v}((t_{0},t_{1}])< -1/4$ as desired.  
	
	The hypothesis of (ii) implies $t_{1}\le t_{0}+1/2$. This yields 
	$f_{v}((t_{0},t_{1}]) \ge -1/2$. On the other hand, $B_{v}(t)$ never occurs during $(t_{0},t_{0}+1/2]$ since it did at time $t_{0}$. Hence by (i), we have $f_{v}((t_{0},t_{1}]) < -1/4$. This shows (ii).
\end{proof}

The next proposition shows that the adaptive 4-coupling is essentially the 4-coupling when a node has small degree. 

\begin{proposition}\label{prop:statebasic}
	Let $G=(V,E)$, $(\Sigma_{\bullet}(t))_{t\ge 0}$, and $(\Lambda_{\bullet}(t))_{t\ge 0}$ be as before. Let $v\in V$. 
	\vspace{0.2cm}
	\begin{description}[noitemsep]	
		\item[(i)] If $\deg(v)=1$, then $\sigma_{v}(t)\equiv 0$ for all $t> 2$. 	
		\vspace{0.1cm}
		\item[(ii)] Suppose $\deg(v)\le 3$. If either $(\mu_{v}^{1}(t_{0}), \mu_{v}^{3}(t_{0}), \sigma_{v}(t_{0}))$ or $(\mu_{v}^{1}(t_{0}^{+}), \mu_{v}^{3}(t_{0}^{+}), \sigma_{v}(t_{0}^{+}))$ is $(3,0,0)$ for some $t_{0}\ge 0$, then $\sigma_{v}(t)\equiv 0$ for all $t> t_{0}$.
		
		\item[(iii)] If $G$ has maximum degree $\le 3$ and $(\mu_{\bullet}^{1}(0), \mu_{\bullet}^{3}(0), \sigma_{\bullet}(0))\equiv (3,0,0)$, then $(\phi_{\bullet}(t))_{t\ge 0}$ follows the 4-coupling. 
	\end{description}
\end{proposition}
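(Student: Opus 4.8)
The plan is to prove the three parts in order, since (ii) uses (i) and (iii) uses (ii). The engine behind all three is the observation that the excitation event $E_{v}(t) = P_{v}(t) \cap \{\mu_{v}^{1}(t) = \mu_{v}^{3}(t)\}$ is the only gateway out of the rested state $\sigma_{v} = 0$, and that $\mu_{v}^{3}$ is essentially a (capped) count of pulls accumulated since the last blink. So the strategy is to bound how large $\mu_{v}^{3}(t)$ can grow between consecutive blinks of $v$, using Proposition \ref{prop:blinking} (at most one blink per second, at least one per five seconds) together with the degree bound, and show this count can never reach the dynamic threshold $\mu_{v}^{1}(t)$ when $\deg(v)$ is small and the threshold sits at $3$.

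For (i): if $\deg(v) = 1$, let $u$ be its unique neighbor. Between two consecutive blinks of $v$, $u$ blinks at most once (Proposition \ref{prop:blinking}), so $P_{v}(t)$ occurs at most once in any window $\beta_{v}(t) \in [0,1)$; hence $\mu_{v}^{3}(t) \le 1$ always, once $\beta_v$ has been reset by a blink (which happens by time $\le 2$ or so, again by Proposition \ref{prop:blinking}). Since $\mu_{v}^{1}(t) \in \{1,3\}$, once $\mu_{v}^{1}(t) = 3$ the equality $\mu_{v}^{1} = \mu_{v}^{3}$ is impossible; and if $\mu_{v}^{1}(t) = 1$, I need the refinement that $\mu_{v}^{3}$ only gets incremented on a pull counted with the $\mu_v^2$/$\beta_v = 1/4$ bookkeeping, so a single isolated pull per cycle from a degree-one neighbor does not get counted towards reaching the threshold (this is exactly the role of the $\mu_v^2$ flag: a lone pull arriving more than $1/4$ second after the last blink is counted, but there is at most one such, matching $\mu_v^1 = 1$ would need the state to already be nonzero — this needs to be traced carefully through Algorithm 1). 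Either way $E_{v}(t)$ never occurs after a short transient, so $\sigma_{v}(t)$ can only move through the refractory states $1 \to 2 \to 0$, reaching $0$ within two further blinks, i.e.\ by time $\le 2$; and once at $0$ it stays. The main obstacle here — and in fact the delicate point of the whole proposition — is the bookkeeping in the $\mu_v^2$/$\mu_v^3$ update rules: one must verify that a single pull per oscillation cycle genuinely cannot trip $E_v$, which requires reading off the precise cases in lines 21--28 of Algorithm 1 rather than arguing heuristically.

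For (ii): suppose $(\mu_{v}^{1}, \mu_{v}^{3}, \sigma_{v}) = (3,0,0)$ at time $t_0$ or $t_0^+$. The key claim is that from this configuration, $\mu_{v}^{3}(t)$ can never reach $3$ before the next blink (which resets $\mu_v^3$ to $0$ again, and resets $\mu_v^1$ to $3$ when $\sigma_v \ne 2$, and $\sigma_v$ stays $0$). Indeed, with $\deg(v) \le 3$, in a moving window of length $1$ second each of the $\le 3$ neighbors blinks at most once, so at most $3$ pulls can occur; but to have $\mu_v^3$ increment on the first pull one needs $\mu_v^2 = 1$, meaning no blink of $v$ in the preceding $1/4$ second, and I claim three such counted pulls within one inter-blink interval forces a blink of $v$ in between (via the total phase inhibition argument as in Proposition \ref{prop:blinking}: three pulls totalling $-3/4$ in phase pushes $\phi_v$ around). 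More precisely one shows $\mu_v^3(t) \le 2 < 3 = \mu_v^1(t)$ throughout, so $\mu_v^1 = \mu_v^3$ fails, $E_v(t)$ never occurs, $\sigma_v(t)$ stays at $0$, and by induction on successive blinks the triple $(3,0,0)$ is restored at each blink — a self-perpetuating invariant. The quantitative heart is again a phase-inhibition / pigeonhole estimate bounding the counted pull count by $2$.

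For (iii): this is now essentially a corollary of (ii) applied simultaneously to every node. Since $(\mu_\bullet^1(0), \mu_\bullet^3(0), \sigma_\bullet(0)) \equiv (3,0,0)$ and $G$ has maximum degree $\le 3$, part (ii) gives $\sigma_v(t) \equiv 0$ for all $v$ and all $t \ge 0$. When $\sigma_v \equiv 0$, the phase-response map $F(x, s) = f_0(x)\mathbf{1}\{s=0\} + x\mathbf{1}\{s\ne 0\}$ reduces to $F(x,0) = f_0(x)$, which is exactly the $4$-coupling PRC; and the event $E_v(t)$ being impossible means the state update for $\sigma_v$ is vacuous, so the only active part of the dynamics \eqref{eq:A4C_evolution} on the phase coordinate is $\phi_v(t^+) = f_0(\phi_v(t))$ upon $R_v(t)$, i.e.\ the $4$-coupling equations. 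Hence $(\phi_\bullet(t))_{t\ge 0}$ follows the $4$-coupling. I expect no obstacle in (iii) beyond citing (ii); all the real work is the counting estimates in (i) and (ii).
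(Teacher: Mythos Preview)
Your overall strategy --- show $E_v$ never fires by bounding $\mu_v^3$ --- is correct, and (iii) is fine once (ii) is in hand. But the execution of (i) and especially (ii) is tangled, and in (ii) you invoke a mechanism that runs in the wrong direction.

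The structural fact you are missing is that $\mu_v^3$ is reset to $0$ not only when $v$ blinks but also whenever $\beta_v$ hits $1$ (Algorithm~1, line~17). The paper therefore partitions time into intervals $(t_i,t_{i+1}]$ where $t_i$ is the $i$th time that $B_v(t)\cup\{\beta_v(t^-)=1\}$ occurs; each such interval has length $\le 1$ and $\mu_v^3(t_i^+)=0$. With this in hand, part~(ii) is a one-line pigeonhole: in any interval of length $\le 1$, each of the $\le 3$ neighbors blinks at most once (Proposition~\ref{prop:blinking}), so $P_v$ occurs at most three times; but $E_v(s)$ requires $\mu_v^3(s)=\mu_v^1(s)=3$ \emph{at} the pull time $s$, which means three prior pulls plus the pull at $s$ --- four in total, impossible. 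Hence $E_v$ never fires, $\sigma_v$ stays $0$, and since the next interval again starts with $\mu_v^1=3$, $\mu_v^3=0$, the invariant propagates. Part~(i) is the same count with $\deg(v)=1$: at most one pull per interval, so at that pull $\mu_v^3=0\ne\mu_v^1\in\{1,3\}$; no $\mu_v^2$ casework is needed.

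By contrast, you work with inter-\emph{blink} intervals (which can be up to five seconds long, so up to fifteen pulls) and then try to rescue the count with a phase-inhibition argument: ``three such counted pulls within one inter-blink interval forces a blink of $v$ in between.'' This is backwards --- inhibition \emph{delays} the next blink of $v$, it does not force one --- so this step fails, and without the $\beta_v=1$ reset you have no leverage on $\mu_v^3$ over those longer intervals. Switch to the $\beta_v$-reset intervals and the whole proof collapses to a clean count.
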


\begin{proof}
	\begin{description}
		\item{(i)} Let $t_{i}$ be the $i^{\text{th}}$ time that  $B_{v}(t)\cup \{ \beta_{v}(t^{-})=1 \}$ occurs after time 0. Note that for $i\ge 1$, we have $t_{i+1}\le t_{i}+1$ and $\mu_{v}(t_{i}^{+})=(y_{i},0,0)$ where $y_{i}\in \{1,3\}$. Since the unique neighbor of $v$ blinks at most once in every second, $P_{v}(t)$ occurs at most once during every interval $(t_{i},t_{i+1}]$. Hence $E_{v}(t)$ does not occur during each $(t_{i},t_{i+1}]$. Note that since $B_{v}(t)$ occurs once in every 5 seconds (Proposition \ref{prop:blinking}) and $t_{i+1}=t_{i}+1$ if $B_{v}(t_{i})$ occurs, we know that $t_{n}\rightarrow \infty$ as $n\rightarrow \infty$. Thus $E_{v}(t)$ never occurs for all $t\ge t_{1}\ge 1$. Now $\sigma_{v}(s^{+})=0$ for some $s\le 2$, so the assertion follows.   
		\vspace{0.1cm}
		\item{(ii)} First note that $E_{v}(t_{0})$ does not occur and $\sigma_{v}(t_{0})=0$; It is clear if $(\mu_{v}^{1}(t_{0}), \mu_{v}^{3}(t_{0}), \sigma_{v}(t_{0}))$ equals to $(3,0,0)$. Otherwise, note that $\sigma_{v}(t_{0})\in \{0,2\}$ since $\sigma_{v}(t_{0}^{+})=0$. Since $B_{v}(t_{0})$ occurs and $\mu_{v}^{1}(t_{0}^{+})=2$ if  $(\sigma_{v}(t_{0}),\sigma_{v}(t_{0}^{+}))=(2,0)$, we should have $\sigma_{v}(t_{0})=0$. Then $E_{v}(t_{0})$ cannot occur since otherwise we would have $\sigma_{v}(t_{0}^{+})=1$. 
		
		\quad Now it suffices to show that $E_{v}(t)$ never occurs for all $t>t_{0}$. To this end, for each $i\ge 1$, let $t_{i}$ be the $i^{\text{th}}$ time that $B_{v}(t)\cup \{ \beta_{v}(t^{-})=1 \}$ occurs after time $t_{0}$. As before $t_{i+1}\le t_{i}+1$ for all $i\ge 0$. Suppose for contrary that $E_{v}(s_{1})$ occurs for some $s_{1}\in (t_{0},t_{1}]$. In both cases $\mu_{v}^{1}(t_{0}^{+})=3$ and $B_{v}(t)$ never occurs during $(t_{0},t_{1}]$ by definition, so $\mu_{v}^{1}(t)\equiv 3$ during $(t_{0},t_{1}]$. Hence in order for $E_{v}(s_{1})$ to occur, $P_{v}(t)$ must occur at least four times during $(t_{0},t_{1}]$. But since $\deg(v)\le 3$, $P_{v}(t)$ can occur at most three times during any interval $(s,s+1]$, a contradiction. Thus $E_{v}(t)$ never occurs during $(t_{0},t_{1}]$. This yields $\sigma_{v}(t)\equiv 0$ during the same interval and $\sigma_{v}(t_{1}^{+})=0$, so $\mu_{v}^{1}(t_{1}^{+})=3$. By definition of $t_{1}$, $\mu_{v}^{3}(t_{1}^{+})=0$. Thus the hypothesis is satisfied at time $t_{1}^{+}$. By induction, the same conclusion holds for all successive intervals $(t_{j},t_{j+1}]$. Note that $t_{n}\rightarrow \infty$ as $n\rightarrow \infty$ as in the proof of (i). Thus the assertion follows. 	
		\vspace{0.1cm} 
		\item{(iii)}  Follows immediately from (ii).
	\end{description} 
\end{proof}

Fix a finite simple graph $G=(V,E)$. A node $v$ in $G$ is a \textit{leaf} if $\deg(v)=1$. A connected induced subgraph $B\subseteq G$ is called a \textit{branch} if there exists a single node $v\in V(B)$ such that $B-v$ is a disjoint union of leaves in $G$ and $v$ has a single neighbor, say $w$, in $G-B$. We call $v$ and $w$ the \textit{center} and \textit{root} of branch $B$, respectively. We call $B$ a \textit{$k$-branch} in $G$ if it is a branch with $k\ge 1$ leaves. See Figure \ref{fig:branch_fig} for illustration.

\begin{figure*}[h]
	\centering
	\includegraphics[width = 0.3 \linewidth]{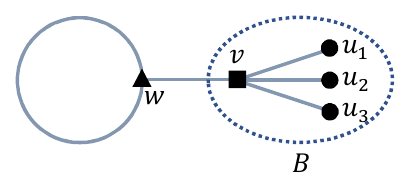}
	\vspace{-0.2cm}
	\caption{ A graph $G$ with a 3-branch $B$ with root $w$, center $v$, and leaves $u_{1},u_{2},$ and $u_{3}$.
	}
	\label{fig:branch_fig}
\end{figure*}

Below is a basic observation about the behavior of leaves in branches under the adaptive 4-coupling. 

\begin{proposition}\label{prop:branch_basic}
	Let $G=(V,E)$, $(\Sigma_{\bullet}(t))_{t\ge 0}$, and $(\Lambda_{\bullet}(t))_{t\ge 0}$ be as before. Let $B\subseteq G$ be a $k$-branch with center $v$ and root $w$.
	\vspace{0.2cm}
	\begin{description}[noitemsep]
		
		\item[(i)] Suppose $\sigma_{v}(t_{0})=0$, $\sigma_{v}(t_{0}^{+})=1$ for some $t_{0}\ge 6$, and let $t_{1}=\inf\{ t\ge t_{0}+1\,:\, \sigma_{v}(t)=0 \}$ and $\Lambda_{v}(t_{0}^{+})=1/2$. Then $t_{1}\le t_{0}+2$ and $\Lambda_{u}(t_{1}^{+})\in (0,1/2]$ for each leaf $u$ in $B$.
		\vspace{0.1cm}
		
		\item[(ii)] Suppose $v$ blinks at time $t_{0}\ge 8$ and $\sigma_{v}(t)\equiv 0$ for all $t\in [6,t_{0}]$. If we let $\Lambda_{v}(t_{0})=1/2$, then $\Lambda_{u}(t_{0}^{+})\in [1/4,3/4]$ for each leaf $u$ in $B$.   
	\end{description}
\end{proposition}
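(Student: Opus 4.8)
The plan is to isolate a single computation --- how a blink of the center $v$ occurring at relative phase $1/2$ moves the leaves of $B$ --- and then to track the leaves across the at most two such blinks that matter in each part. First I would record two facts. (a) Every leaf $u$ of $B$ has degree $1$ in $G$, so by Proposition~\ref{prop:statebasic}(i) it is rested for all $t>2$; hence $u$ evolves by the $4$-coupling, and since $v$ is its only neighbor, $\Lambda_u$ changes only at blink times of $v$. (b) If $v$ blinks at a time $t$ with $\Lambda_v(t)=\tfrac12$, then reading \eqref{eq:relative_evolution_4C} with $\delta_t(u,v)=\Lambda_u(t)-\tfrac12\bmod 1$ gives, for each leaf $u$ of $B$,
\[
\Lambda_u(t^{+})=\begin{cases}\Lambda_u(t)&\text{if }\Lambda_u(t)\in(0,\tfrac12],\\[2pt] \tfrac12 &\text{if }\Lambda_u(t)\in(\tfrac12,\tfrac34),\\[2pt] \Lambda_u(t)-\tfrac14\in[\tfrac12,\tfrac34] &\text{if }\Lambda_u(t)\in[\tfrac34,1)\cup\{0\}.\end{cases}
\]
In particular this map carries $[0,1)$ into $(0,\tfrac34]$, carries $[0,1)\setminus(0,\tfrac14)$ into $[\tfrac14,\tfrac34]$, and carries $(0,\tfrac34]$ into $(0,\tfrac12]$; each is a one-line check on the displayed cases.

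For (i): the jump $\sigma_v(t_0)=0$, $\sigma_v(t_0^{+})=1$ starts $v$'s refractory period, throughout which $v$ ignores all pulses, so $\Lambda_v$ stays equal to $\Lambda_v(t_0^{+})=\tfrac12$ until $v$ has blinked twice. Since $\Lambda_v\equiv\tfrac12$, $v$ blinks exactly when the activator reaches $\tfrac12$: at some $s_1\in(t_0,t_0+1]$ and then at $s_2=s_1+1\le t_0+2$, and $\sigma_v$ first returns to $0$ only after $s_2$, so $t_1=s_2\le t_0+2$. On $(t_0,t_1]$ the only blinks of $v$ are at $s_1$ and $s_2$, both at relative phase $\tfrac12$, and $t_0\ge 6$ keeps every leaf rested throughout. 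By fact (b), right after the blink at $s_1$ every leaf lies in $(0,\tfrac34]$; this persists until $s_2$ because leaves are fixed between $v$-blinks; one further application of the map at $s_2$ sends $(0,\tfrac34]$ into $(0,\tfrac12]$, which is the claim.

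For (ii): by fact (b) it suffices to show no leaf $u$ has $\Lambda_u(t_0)\in(0,\tfrac14)$, for then $\Lambda_u(t_0)\in[0,1)\setminus(0,\tfrac14)$ and the blink of $v$ at $t_0$ (relative phase $\tfrac12$) maps it into $[\tfrac14,\tfrac34]$. Suppose $\Lambda_u(t_0)=p\in(0,\tfrac14)$. By Proposition~\ref{prop:blinking}, $v$ does not blink on $(t_0-1,t_0)$, so $\Lambda_u\equiv p$ there; and as the activator sweeps the whole circle over $(t_0-1,t_0]$ from $\alpha(t_0)=\Lambda_v(t_0)=\tfrac12$, it meets $p$ exactly at $\tau_u:=t_0-(p+\tfrac12)\in(t_0-\tfrac34,t_0-\tfrac12)$, where $u$ blinks with $\Lambda_u(\tau_u)=\alpha(\tau_u)=p$. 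Since $\tau_u>t_0-\tfrac34\ge 8-\tfrac34>6$, $v$ is rested on $[\tau_u,t_0]$ by hypothesis and, again by Proposition~\ref{prop:blinking}, does not blink on $(\tau_u,t_0)$; hence on that interval $\phi_v$ only rises at unit rate or jumps down, so $\phi_v(t_0^{-})=1$ forces $\phi_v(\tau_u^{+})\ge 1-(p+\tfrac12)=\tfrac12-p>\tfrac14$. On the other hand $u$ sits at the activator at $\tau_u$, so $\phi_v(\tau_u)=\delta_{\tau_u}(v,\alpha)=\delta_{\tau_u}(v,u)$; here $\phi_v(\tau_u)=0$ is impossible (a blink of $v$ inside $(t_0-1,t_0)$), if $\phi_v(\tau_u)\in(0,\tfrac12]$ then $u$'s blink pulls $v$ and $v$ being rested gives $\phi_v(\tau_u^{+})=f_0(\phi_v(\tau_u))\le\tfrac14$ (contradiction), and if $\phi_v(\tau_u)\in(\tfrac12,1)$ then pulls leave $\phi_v$ untouched while it exceeds $\tfrac12$, so $\phi_v$ rises monotonically and $v$ blinks at $\tau_u+(1-\phi_v(\tau_u))<\tau_u+\tfrac12<t_0$ --- once more a blink inside $(t_0-1,t_0)$. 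All cases being impossible, no such leaf exists. I expect part (ii) to be the main obstacle: it is the only place where the long rested window ($t_0\ge 8$ and $\sigma_v\equiv 0$ on $[6,t_0]$) is genuinely needed, and the contradiction depends on making the ``dead zone'' $f_0(x)=x$ for $x>\tfrac12$ of the $4$-coupling PRC interlock cleanly with the blink-frequency bounds of Proposition~\ref{prop:blinking}.
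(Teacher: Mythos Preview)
Your proof is correct. Part~(i) is essentially identical to the paper's argument: both observe that $v$ is refractory with $\Lambda_v\equiv\tfrac12$, blinks at $s_1\le t_0+1$ and $s_2=s_1+1$, and that two applications of the blink map at relative phase $\tfrac12$ send every leaf into $(0,\tfrac12]$. Your explicit description of the map (``fact~(b)'') is a clean way to package the case analysis.

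Part~(ii) takes a genuinely different route. The paper argues \emph{forward} from $v$'s side: it identifies the last time $\tau$ at which $P_v$ occurs in $[6,t_0]$, notes that $\Lambda_v(\tau^{+})=\tfrac12$ forces $\alpha(\tau)\in[\tfrac14,\tfrac12]$, and then shows every neighbor of $v$ must already have relative phase $\ge\tfrac14$ at $\tau^{+}$ (else it would blink and pull $v$ again before $t_0$). You argue \emph{backward} from a hypothetical bad leaf: assuming $\Lambda_u(t_0)=p\in(0,\tfrac14)$, you locate $u$'s unique blink $\tau_u\in(t_0-\tfrac34,t_0-\tfrac12)$, bound $\phi_v(\tau_u^{+})\ge\tfrac12-p>\tfrac14$ from $\phi_v(t_0^{-})=1$, and then split on $\phi_v(\tau_u)$ to reach a contradiction via either $f_0(x)\le\tfrac14$ on $(0,\tfrac12]$ or the dead zone $f_0(x)=x$ on $(\tfrac12,1)$. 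Your approach is somewhat more elementary and self-contained: it sidesteps the paper's preliminary claim that $P_v(t)$ must occur somewhere in $[6,t_0]$, and it makes transparent exactly which features of the PRC are doing the work. The paper's approach, on the other hand, yields the slightly stronger intermediate statement that \emph{all} neighbors of $v$ (not just leaves) have relative phase $\ge\tfrac14$ just after the last pull, which is conceptually nice but not needed here.
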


\begin{proof}
	To begin, we note that by Proposition \ref{prop:statebasic} (i), all leaves in $B$ have state $0$ for all times $t\ge 6$. 
	\begin{description}		
		\item{(i)} Let $\tau_{v;i}$ be the $i^{\text{th}}$ time that $v$ blinks after time $t_{0}$. Then $\tau_{v;1}\le t_{0}+1$, $\tau_{v;2}=\tau_{v;1}+1$, and $\Lambda_{v}(t)\equiv 1/2$ for all $t\in (t_{0},\tau_{v;2}]$. Hence if $u\in V$ is any leaf in $B$ with $\Lambda_{u}(t_{0})\in [1/2, 1]$, then it is pulled by $v$ twice at times $\tau_{v;1}$ and $\tau_{v;2}$, and since $\Lambda_{v}(\tau_{v;1})=\Lambda_{v}(\tau_{v;2})=1/2$ and since $\sigma_{u}(t)\equiv 0$ for all $t\ge t_{0}$, we have $\Lambda_{u}(\tau_{v;2}^{+})=1/2$. Moreover, if $u'$ is another leaf in $B$ such that $\Lambda_{u'}(t_{0})\in [0,1/2)$, then $\Lambda_{u'}(t)$ is constant during $[t_{0},\tau_{v;2}]$. Hence all leaves in $B$ have relative phase $\in (0,1/2]$ at time $\tau_{v;2}^{+}$. So the assertion holds for $t_{1}=\tau_{v;2}\le t_{0}+2$. 
		
		\vspace{0.1cm}
		\item{(ii)} Since $v$ blinks at time $t_{0}$, all leaves in $B$ have relative phase $\in (0,3/4]$ at time $t_{0}^{+}$. On the other hand, since $t_{0}\ge 8$, $P_{v}(t)$ must occur during $[6,t_{0}]$. Let $\tau=\sup\{ 6\le s\le t_{0}\,:\, \text{$P_{v}(s)$ occurs} \}$ be the last time that $v$ is pulled before time $t_{0}$. Then $\Lambda_{v}(t)$ is constant during $(\tau,t_{0}]$ so $\Lambda_{v}(\tau^{+})=\Lambda_{v}(t_{0})=1/2$. This yields $\alpha(\tau^{+})\in [1/4, 1/2]$. Since $P_{v}(t)$ does not occur during $[\tau,t_{0}]$, all neighbors of $v$ have relative phase $\in [1/4,1)$ at time $\tau^{+}$. Since the relative phases of all leaves in $B$ are constant during $(\tau,t_{0}]$, all leaves in $B$ have relative phase $\in [1/4,1)$ at time $t_{0}^{+}$. Combining these to observations then gives (ii).
	\end{description}
\end{proof}

Next, let $G=(V,E)$ be a finite simple graph and let $\phi_{\bullet}(t):V\rightarrow S^{1}$ be any phase configuration at time $t$. We define its \textit{width}, which we denote by $\omega(\phi_{\bullet}(t))\in [0,1)$, by 
\begin{equation}
\omega(\phi_{\bullet}(t))= \max_{u,v\in V} \min \{ \delta_{t}(u,v), \delta_{t}(v,u)\}.
\end{equation}  
In words, it is the length of the shortest arc in $S^{1}$ which contains all phases at time $t$. For a branch $B\subseteq G$, we denote $\omega_{B}(t):=\omega(\phi_{B}(t))$, where $\phi_{B}(t) : V(B)\rightarrow S^{1}$ is the restriction of the phase configuration $\phi_{\bullet}(t)$ on $B$. We call $\omega_{B}(t)$ the \textit{branch width} on $B$ at time $t$.

Note that $\phi_{\bullet}(t)$ synchronizes iff the width of $\phi_{\bullet}(t)$ converges to 0. Hence the width can be thought as a global order parameter which measures how close the current phase configuration is to synchrony. The following lemma, Lemma \ref{widthlemma}, implies that $w(\phi_{\bullet}(t))\in [0,1/2)$ for some $t\ge 0$ implies synchronization by time $t+7d$. As mentioned in Section \ref{Section:Related works}, that this half-width concentration implies convergence (without bound on the convergence time) has been observed for broader classes of non-adaptive pulse-couplings, which includes the 4-coupling (e.g., \cite{nishimura2011robust}). An implication of this lemma is that synchrony is stable under small perturbations, as long as the width is still $<1/2$.

\begin{lemma}[width lemma]\label{widthlemma}
	Let $G=(V,E)$ be a finite simple connected graph with diameter $d$ and let $(\Sigma_{\bullet}(t))_{t\ge 0}$ be a joint trajectory on $G$ evolving through the adaptive 4-coupling.  
	\vspace{0.2cm}
	\begin{description}[noitemsep]
		\item[(i)] $\phi_{\bullet}(t)$ synchronizes if and only if $\omega(\phi_{\bullet}(\tau))\in [0,1/2)$ for some $\tau\ge 0$.
		\vspace{0.1cm}
		\item[(ii)] If $\omega(\phi_{\bullet}(\tau))\in [0,1/2)$, then $\omega(\phi_{\bullet}(\tau+7d))=0$. 
	\end{description}
\end{lemma}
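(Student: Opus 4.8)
The plan is to show that the half-width concentration $\omega(\phi_\bullet(\tau))<1/2$ is preserved by the dynamics and contracts linearly along shortest paths. The starting observation is that if all phases lie on an arc of length $<1/2$, then this arc induces a genuine linear order on the oscillators: writing the arc as $(\phi_{\min},\phi_{\max})$ in the clockwise sense, every node $v$ has a well-defined ``lag'' $\ell_v(t)=\delta_t(v,\phi_{\max}(t))\in[0,1/2)$ measuring how far behind the front it is, where $\phi_{\max}$ denotes the most advanced phase. The key structural fact I would establish first is that when $\omega(\phi_\bullet(t))<1/2$, the adaptive 4-coupling can only \emph{decrease} lags and never increase them, and in particular never reverses the order: when a node $u$ blinks (so $u$ is at the front, $\Lambda_u=\alpha$), any neighbor $v$ it pulls satisfies $\delta_t(v,u)<1/2$, so $v$ is behind $u$; the rule either merges $\Lambda_v$ onto $\Lambda_u$ (if $\delta_t(v,u)<1/4$) or shifts $\Lambda_v$ counterclockwise by $1/4$ (if $1/4\le\delta_t(v,u)\le1/2$); in the shift case, since $\delta_t(v,u)\le 1/2$, the new position $\Lambda_v-1/4$ still satisfies $\delta_t(v,u)-1/4\ge 0$, i.e. $v$ does not overtake $u$, and its lag relative to the front strictly decreases. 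Crucially one must also check the refractory/excited modifications do not spoil this: in states $\sigma_v\ne 0$ the PRC is the identity, so $v$ simply does not move on a pulse, which trivially cannot increase any lag or break the order. This gives part (i): once $\omega<1/2$, it stays $<1/2$ forever, and the only recurrent behavior compatible with every node blinking at least once every five seconds (Proposition \ref{prop:blinking}) is for all lags to reach $0$, i.e. $\omega\to 0$; conversely synchrony obviously forces $\omega=0<1/2$.

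For part (ii), the quantitative bound, I would track the front oscillator and a ``laggard'' and argue that the laggard catches up at a definite rate. Fix $\tau$ with $\omega(\phi_\bullet(\tau))<1/2$ and let $u$ be a node achieving the minimal lag (on the front) at time $\tau$; let $v$ be arbitrary with $d(u,v)\le d$. Along a shortest $u$–$v$ path $u=x_0,x_1,\dots,x_m=v$ ($m\le d$), the claim to prove is: whenever $x_{i-1}$ has lag $0$ (is synchronized with the front) throughout a time interval, then $x_i$ acquires lag $0$ within $7$ seconds (or some explicit constant summing to $7d$). The mechanism is that $x_{i-1}$, being at the front, blinks at least once every $5$ seconds (Proposition \ref{prop:blinking}), and each blink pulls $x_i$ toward it — by merging if $x_i$ is within $1/4$, else by a $1/4$ counterclockwise jump that brings it within $1/4$, after which the next blink merges it — unless $x_i$ is refractory. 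Here I would use Proposition \ref{prop:statebasic}/\ref{prop:blinking} to bound how long $x_i$ can stay refractory: a refractory node becomes rested again after blinking twice, and since it is near the front it blinks within a couple of seconds, so after at most a bounded delay $x_i$ is rested and the merging argument applies at the next blink of $x_{i-1}$. Summing the per-edge delay over the $\le d$ edges of the path gives synchronization of $v$ with the front by time $\tau+7d$; since $v$ was arbitrary this gives $\omega(\phi_\bullet(\tau+7d))=0$, noting also that once a node is synchronized with the front it stays so (by the no-overtaking/monotonicity from part (i)).

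The main obstacle is the bookkeeping around the adaptive variables: one must be sure that the extra dynamics ($\sigma_v$, $\mu_v$, $\beta_v$) never causes a front node to stop blinking or a following node to be pulled the ``wrong way,'' and one must pin down the exact constant so that the per-edge cost (blink period of the front node, plus worst-case refractory delay of the follower, plus the one or two blinks needed to complete the merge) is at most $7$. Getting $7$ rather than a larger constant presumably relies on the fact that near the front ($\omega<1/2$) excitation $E_v(t)$ is hard to trigger — indeed a front node facing only nearby phases should essentially never get excited — so the relevant nodes are almost always in the rested state and behave like the plain 4-coupling, for which the half-circle contraction argument is classical (as cited, e.g. \cite{nishimura2011robust}). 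I would isolate this ``no excitation near the front'' fact as the crux lemma and derive the rate bound from it, with the linear-order monotonicity of part (i) doing the rest.
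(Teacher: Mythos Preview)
Your proposal has the anchor backwards, and this is not a cosmetic issue. In the adaptive $4$-coupling a blinking node $u$ pulls a neighbor $v$ exactly when $\delta_t(v,u)\in(0,1/2]$; in the paper's terminology this means $v$ is \emph{left} of (i.e., \emph{ahead} of) $u$, not behind it as you wrote. Consequently the ``front'' (most advanced) oscillator is \emph{not} invariant: every time a lagging neighbor blinks, the front gets pulled back. Conversely, the most lagging oscillator $v_0$ is never ahead of anyone, hence is never pulled; it is the only stationary anchor. Your propagation scheme ``$x_{i-1}$ at the front blinks and pulls $x_i$ toward it'' fails for the same reason: when the most advanced node blinks, all neighbors have phase in $(1/2,1)$ and none is pulled.

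The paper's proof takes the opposite orientation. It fixes $v_0$ as the most lagging node, sets $\Lambda_{v_0}(\tau)=0$, and BFS-layers $V=V_0\sqcup V_1\sqcup\cdots\sqcup V_k$ from $V_0=\{u:\Lambda_u(\tau)=0\}$. Since the coupling is inhibitory and $\omega<1/2$, all relative phases stay in $[0,1/2)$ and $V_0$ stays at $0$ forever. For $v_1\in V_1$ one waits for a time $t_{v_1;1}$ at which $B_{v_1}\cap\{\sigma_{v_1}(t^+)=0\}$ occurs (such times recur every $\le 7/2$ seconds under the half-width hypothesis); within the next $1/2$ second some $V_0$-neighbor blinks and pulls $v_1$, forcing $\Lambda_{v_1}\in[0,1/4)$ either directly (if $\sigma_{v_1}=0$) or via Proposition~\ref{prop:E_v(t)_needs_enough_pull}(i) (if $\sigma_{v_1}=1$). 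One more such cycle gives $\Lambda_{v_1}=0$ by time $\tau+7$, and induction over the layers gives $\omega(\phi_\bullet(\tau+7d))=0$. Your ``no excitation near the front'' crux lemma is also inverted: front nodes are pulled often and may well get excited; it is the back nodes that are never pulled, and the paper sidesteps the refractory issue not by ruling out excitation but by clocking the argument on the events $B_x\cap\{\sigma_x(t^+)=0\}$.
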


\begin{proof} 
	The ``only if'' part of (i) is obvious. (ii) implies the ``if'' part of (i). In order to show (ii), let $v_{0}\in V$ be a vertex such that $\delta_{\tau}(u,v_{0})=\omega(\phi_{\bullet}(\tau))\in [0,1/2)$ for some $u\in V$. In other words, $v_{0}$ is one of the ``most lagging'' oscillator at time $\tau$. Since the adaptive 4-coupling is inhibitory, this implies that $v_{0}$ never get pulled after time $\tau$ and all the other nodes are inhibited toward $v_{0}$. 
	
	\begin{figure*}[h]
		\centering
		\includegraphics[width = 1 \linewidth]{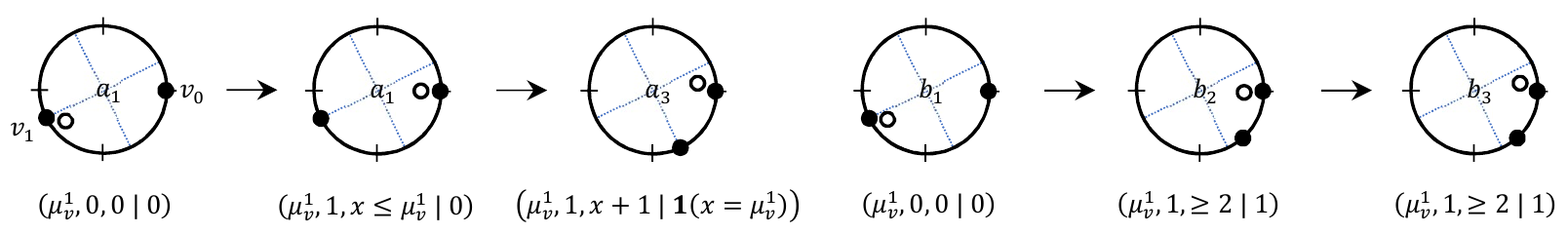}
		\vspace{-0.2cm}
		\caption{ (In relative circular representation) Dynamics on $G$ assuming $\omega(\phi_{\bullet}(\tau))\in [0,1/2)$. $\bullet=$relative phases of $v_{0}$ and $v_{1}$, and $\circ= $ activator. Below each $b_{i}$'s, $(a,b,c \,|\, d)$ denotes $\mu_{v}=(a,b,c)$ and $\sigma_{v}=d$. 
		}
		\label{fig:fig_widthlemma}
	\end{figure*}

	More precisely, let $(\Lambda_{\bullet}(t))_{t\ge 0}$ be the induced relative phase trajectory, where we take $\Lambda_{v_{0}}(\tau)=0$. Decompose $V=V_{0}\sqcup V_{1}\sqcup \cdots \sqcup V_{k}$ where $V_{0}=\{u\in V\,:\, \Lambda_{u}(\tau)=0 \}$ and $	V_{i+1} = \left( \bigcup_{u\in V_{i}} N(u) \right) \setminus V_{i}$ for $1\le i \le k$. Note that $0\le k \le d$ by construction. Observe that since $\omega(\phi_{\bullet}(\tau))\in [0,1/2)$, all nodes have relative phase $\in [0,1/2)$ for all times $t\ge \tau$ and $\Lambda_{v}(t)\equiv 0$ for all $v\in V_{0}$ and $t\ge \tau$. For each $x\in V$ and $i\ge 1$, let $t_{x;i}$ be the $i^{\text{th}}$ time that $B_{x}(t)\cap \{ \sigma_{x}(t^{+})=0 \}$ occurs after time $\tau$. By the hypothesis, $t_{x;i+1}\le t_{x;i}+3+1/2$ for each $i\ge 0$, where we take $t_{x;0}=\tau$.

	Fix $v_{1} \in V_{1}$. By the construction of $V_{i}$'s and the hypothesis, $v_{1}$ is pulled by some neighbor in $V_{0}$ at some time $t_{2}\in (t_{v_{1};1},t_{v_{1};1}+1/2]$. Then either $\sigma_{v_{1}}(t_{2})=0$ and $\Lambda_{v_{1}}(t_{2}^{+})\in [0,1/4)$ (see Figure \ref{fig:fig_widthlemma} $a_{1}\rightarrow a_{3}$) or $\sigma_{v_{1}}(t_{2})=1$. In the latter case, $\Lambda_{v_{1}}(t_{2})\in [0,1/4)$ by Proposition \ref{prop:E_v(t)_needs_enough_pull} (i) (see Figure \ref{fig:fig_widthlemma} $b_{1}\rightarrow b_{3}$). Now applying the same argument for $t_{v_{1};2}$, we get $\Lambda_{v_{1}}(t)=0$ for some $t\le t_{v_{1};2}+1/2 \le \tau + 7$. This holds for all $v_{1}\in V_{1}$, so all nodes in $V_{0}\cup V_{1}$ have relative phase $0$ by time $\tau+7$. Then the assertion follows by repeating the same argument to synchronize all $V_{i}$'s. 
\end{proof}

Next, we state and prove a localized version of Lemma \ref{widthlemma} in the case of adaptive 4-coupling. For any interval $I\subseteq [0,\infty)$ and a subgraph $H\subseteq G$, we say the joint trajectory $(\Sigma_{\bullet}(t))_{t\ge 0}$ \textit{restricts on $H$ during $I$} if the coupling commutes with restriction on $H$ during $I$,  that is, we have 
\begin{equation*}
\Sigma_{v}(t) = \Psi_{v}(t) \quad \text{$\forall v\in V(H)$ and $t\in I$},
\end{equation*}   
where $(\Psi_{\bullet}(t))_{t\ge 0}$ is the joint trajectory on $H$ (independent of $G-H$) evolving from the restricted initial joint configuration $\Psi_{\bullet}(0)=\Sigma_{\bullet}(0)|_{V(H)}:V(H)\rightarrow \Omega$. If $I=[\tau,\infty)$ for some $\tau\ge 0$, then we say the joint trajectory $(\Sigma_{\bullet}(t))_{t\ge 0}$ restricts \textit{after} time $\tau$. 

Now we state our localized version of Lemma \ref{widthlemma}. 

\begin{lemma}[branch width lemma]\label{lemma:branchwidth}
	Let $G=(V,E)$, $(\Sigma_{\bullet}(t))_{t\ge 0}$, $(\Lambda_{\bullet}(t))_{t\ge 0}$ as before. Let $B\subseteq G$ be a branch with center $v$, root $w$, and its set of leaves $L$. Suppose $\Lambda_{v}(t_{0}^{+})=1/2$ for some $t_{0}\ge 6$. 
	\vspace{0.1cm}
	\begin{description}
		\item[(i)] Suppose $\Lambda_{u}(t_{0}^{+})\in [1/2,3/4)$ for all $u\in L$. Then no leaves in $B$ pull $v$ during $[t_{0},\infty)$, and $\omega_{B}(s^{+})<1/4$ whenever $s\ge t_{0}$ and $B_{v}(s)$ occurs. Furthermore, the joint trajectory restricts on $G-L$ during $[t_{0},\infty)$;
		\vspace{0.1cm}
		\item[(ii)] If $\omega_{B}(s_{0}^{+})<1/4$ for some $s_{0}\ge 3$, then (i) holds for some $t_{0}\in [s_{0},s_{0}+3]$. 
	\end{description}
\end{lemma}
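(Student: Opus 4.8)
The plan is to show that, under the hypothesis of (i), the center $v$ is the ``most behind'' node of the branch --- it sits at relative phase $1/2$ while every leaf sits in $[1/2,3/4)$, so $\delta_{t_0^+}(u,v)\in[0,1/4)$ for every leaf $u$ --- and that this feature perpetuates itself, so that no leaf ever gets to the right of, or opposite to, $v$ and hence, by \eqref{eq:relative_evolution_4C}, no leaf ever pulls $v$. I will use three elementary facts. \textbf{(a)} By Proposition \ref{prop:statebasic}(i) every leaf is rested for all $t\ge3$, so it responds to $v$'s blinks through the plain rule \eqref{eq:relative_evolution_4C}, and a blinking leaf $u$ pulls $v$ only when $\delta_t(v,u)\in(0,1/2]$, that is, only when $u$ is right of or opposite to $v$; equivalently, ``$\delta_t(u,v)\in[0,1/2)$ for every leaf'' is exactly the statement that no leaf pulls $v$. \textbf{(b)} A pull of $v$ by $w$ resets $\phi_v$ into $[0,1/4]$ and increases $\delta_t(u,v)$, for every leaf $u$, by at most $1/4$. \textbf{(c)} Hence, as long as no leaf pulls $v$, between any two pulls of $v$ by $w$ the center $v$ blinks at least once (after such a pull $v$ is uninhibited, so reaches phase $1$ within one second, while by Proposition \ref{prop:blinking} a further second must pass before $w$ can pull $v$ again), so at most one pull of $v$ by $w$ falls between two consecutive blinks of $v$.

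\textbf{Proof of (i).} I would induct over the blinking times of $v$ after $t_0$, carrying the invariant: at $t_0^+$ and right after every blink of $v$ one has $\delta_t(u,v)\in[0,1/4)$ for every leaf $u$, and at every time $\ge t_0$ one has $\delta_t(u,v)\in[0,1/2)$ for every leaf $u$. The base case is the hypothesis; the second clause validates the use of (a) and (c) throughout. Between two consecutive blinks of $v$: the leaves only get pulled among themselves, and since $\omega_B<1/4$ each such pull is a merge that slides the pulled leaf toward $v$ without overtaking it; meanwhile, by (c), $w$ pulls $v$ at most once, raising every $\delta_t(u,v)$ by at most $1/4$, hence from $[0,1/4)$ into $[0,1/2)$. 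Thus the leaves stay in $[0,1/2)$ relative to $v$; and when $v$ next blinks it pulls all of them, merging those with $\delta_t(u,v)<1/4$ onto $v$ and shifting those with $\delta_t(u,v)\in[1/4,1/2)$ by $1/4$, so every leaf lands in $[0,1/4)$ --- this reinstates the invariant and yields $\omega_B(s^+)<1/4$ at that blink. If $v$ is, or through a pull by $w$ becomes, refractory, then it ignores every pulse, so $\Lambda_v$ is frozen and $v$ blinks once per second, returning to the rested state within two blinks; throughout, the leaves merge among themselves toward a point of $[\Lambda_v,\Lambda_v+1/2)$ and are re-collected into $[\Lambda_v,\Lambda_v+1/4)$ at the first blink of $v$, so the invariant survives and the main induction resumes. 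Finally, since no leaf ever causes $P_v$ and a leaf blinking while $\phi_v\notin(0,1/2]$ changes neither $\phi_v$ nor the auxiliary variables of $v$, the trajectory of $v$ --- hence of every node of $G-L$ --- coincides with the one obtained by running the adaptive $4$-coupling on $G-L$ alone.

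\textbf{Proof of (ii).} By the rotational symmetry of the relative representation it is enough to produce a time $t_0\in[s_0,s_0+3]$ at which $\delta_{t_0^+}(u,v)\in[0,1/4)$ for every leaf $u$; normalizing $\Lambda_v(t_0^+)=1/2$ then puts the configuration into the shape demanded by (i) (and once that configuration occurs it recurs at every later blink of $v$, which takes care of the constraint $t_0\ge6$). Starting from $\omega_B(s_0^+)<1/4$: the leaves --- rested by Proposition \ref{prop:statebasic}(i) --- collapse onto their rightmost representative within one second, as the activator sweeps the branch and every internal pull is a merge. If $v$ is refractory or excited at $s_0$ it freezes $\Lambda_v$, blinks once per second, and is rested by time $\le s_0+2$. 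Once $v$ is rested, within one further second the branch collapses completely, every leaf coinciding with $v$ (a blink of $v$ absorbs the leaves if $v$ is behind them, a blink of the leaves merges $v$ onto them otherwise); a pull of $v$ by $w$ in between is absorbed exactly as in (i). Tallying the at most three unit delays --- $v$ becoming rested, the leaf collapse, the final collapse at a blink --- gives $t_0\le s_0+3$.

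\textbf{Main obstacle.} The real work is in part (ii): one must treat an arbitrary starting state of $v$ (rested, refractory, or excited) together with possible interference from the root $w$, all while keeping the convergence window at exactly three seconds. The underlying mechanism --- branch width $<1/4$ is preserved by the internal merging dynamics, which collapse the branch onto its rightmost node, and that node is, or becomes, $v$ --- is simple; the difficulty is the bookkeeping and, above all, checking that a pull of $v$ by $w$ can never carry a leaf past the position opposite $v$, which is the only way a leaf could start pulling $v$.
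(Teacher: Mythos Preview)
Your argument follows the paper's approach: for (i), induct over the blinking times of $v$, using that $w$ pulls $v$ at most once between consecutive blinks so each $\delta_t(u,v)$ stays strictly below $1/2$ and is reset into $[0,1/4)$ at the next blink of $v$; for (ii), wait for $v$ to become rested (at most two seconds) and then show that one further second suffices to reach the configuration of (i).

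There is one recurring slip worth correcting: in a branch the leaves are adjacent only to the center $v$, not to one another. So phrases like ``the leaves only get pulled among themselves'' and ``collapse onto their rightmost representative'' are wrong --- between consecutive blinks of $v$ the leaves' relative phases are simply \emph{frozen}, since their unique neighbor has not blinked. For (i) this is harmless (frozen leaves trivially stay within $[0,1/2)$ of $v$ after the single possible pull of $v$ by $w$), and your inductive step goes through unchanged. For (ii) the actual mechanism is not leaf-to-leaf collapse but rather: as the activator sweeps through the branch arc, $v$ blinks (absorbing the leaves to its left onto $\Lambda_v$), and then each leaf to its right blinks in turn (each merging $v$ onto itself), so that $v$ ends at the rightmost leaf's position with every leaf sitting in $[\Lambda_v,\Lambda_v+1/4)$ --- precisely the configuration demanded by (i). The paper defers this transition to a figure reference, so once you correct the leaf-adjacency picture your argument is at the same level of detail.
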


The intuition is that the dynamics on $B$ can only be perturbed by a single node $w$, which can only do so often (at most once in every 1 seconds in our case) under an inhibitory pulse-coupling. Once $\omega_{B}(t)$ is small enough, the branch width remains bounded. This then allows us to restrict the global dynamics on a proper subgraph $G-L$.

\begin{proof}
	Let $L=\{ u_{1},\cdots,u_{\ell} \}$ and write $\lambda_{i}=\Lambda_{u_{i}}(t_{0}^{+})$ for each $1\le i \le \ell$. We may assume without loss of generality that all $\lambda_{i}$'s are distinct, so $1/2 \le \lambda_{1}<\lambda_{2}<\cdots<\lambda_{\ell}<3/4$ (e.g., see Figure \ref{fig:branchwidth1} $a_{2}$). For each $x\in V$ and $i\ge 1$, let $\tau_{x;i}$ be the $i^{\text{th}}$ time that $v$ blinks after time $t_{0}$. These times are well-defined by Proposition \ref{prop:blinking}.

	\begin{figure*}[h]
		\centering
		\includegraphics[width = 0.8 \linewidth]{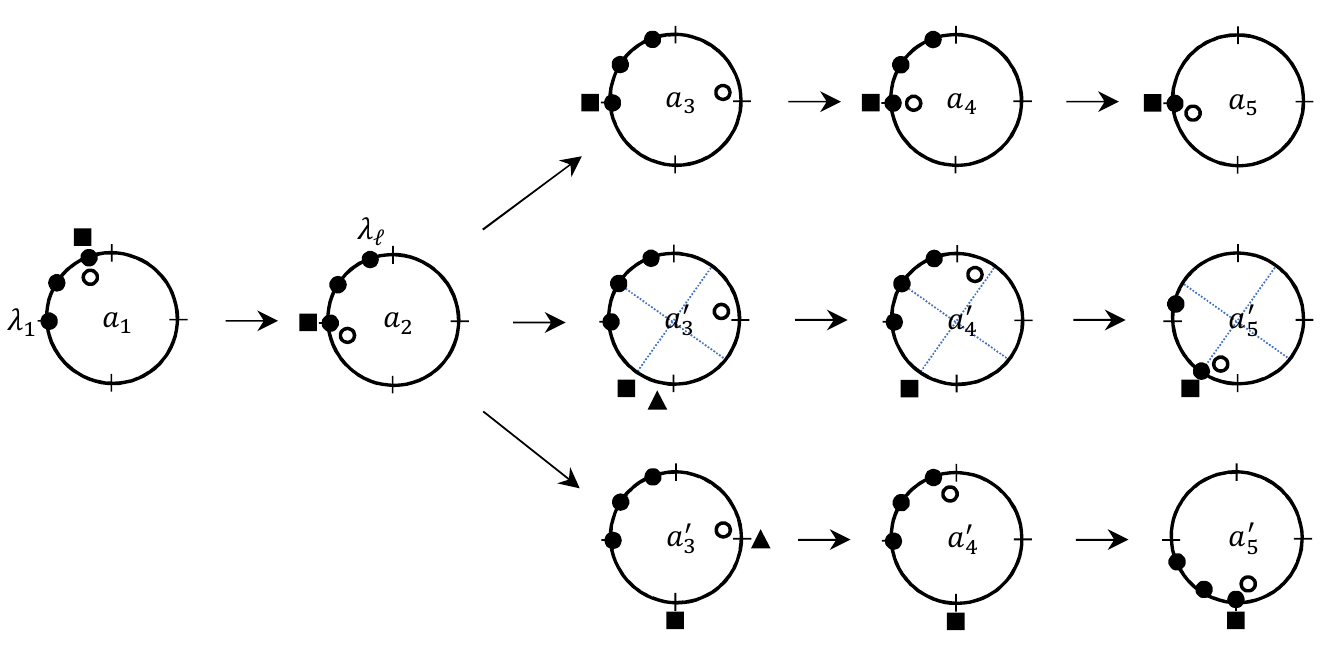}
		\caption{ (In relative circular representation) Local dynamics chasing on a branch $B$ with branch width $<1/4$. $\blacksquare =v$ center, $\bullet = \text{leaves}$, $\blacktriangle=$root, and $\circ= $ activator. External fluctuation on the local dynamics of the branch by its root is weaker than its own restoring force. 
		}
		\label{fig:branchwidth1}
	\end{figure*}
	
	To show (i), since $\tau_{v;n}\rightarrow \infty$ as $n\rightarrow \infty$, it is enough to show that at time $\tau_{v;i}$ for each $i\ge 1$, we are back to the similar local configuration on $B$ and during each interval $(\tau_{v;i},\tau_{v;i+1}]$ none of the leaves in $B$ pulls $v$. Note that since $\Lambda_{v}(t_{0}^{+})=1/2$ and $\lambda_{i}\in [1/2,3/4)$ for all $1\le i \le \ell$, $P_{v}(t)$ never occurs during $(\tau_{w;1},\tau_{v;1}]$. Hence $\Lambda_{v}(\tau_{v;1})=\Lambda_{v}(\tau_{w;1}^{+})\in [1/4,1/2]$ and none of $u_{i}$'s pull $v$ during $[t_{0},\tau_{v;1}]$. Moreover, since all leaves in $B$ have state 0 for all times $t\ge t_{0}$ by Proposition \ref{prop:statebasic} (i), we also get $\Lambda_{u_{i}}(\tau_{v;1}^{+})\in [\Lambda_{v}(\tau_{v;1}),\Lambda_{v}(\tau_{v;1})+1/4)$ for all $1\le i \le \ell$. (See., e.g., Figure \ref{fig:branchwidth1} starting from $a_{2}$). Thus after re-centering so that $\Lambda_{v}(\tau_{v;1}^{+})=1/2$, we recover the same situation with $\omega_{B}(\tau_{v;1}^{+})<1/4$. The same argument applies on successive intervals $(\tau_{v;i},\tau_{v;i+1}]$. This shows (i).
	
	(ii) follows easily from (i). Indeed, if $\sigma_{v}(s_{0}^{+})=0$, then the hypothesis of (i) is satisfied at time $\tau_{u_{1};1}^{+}$ and $\tau_{u_{1};1}\le s_{0}+1$ (see Figure  \ref{fig:branchwidth1} $a_{1}\rightarrow a_{2}$). Otherwise, let $s_{1}=\inf\{ s\ge s_{0}\,:\, \sigma_{v}(s^{+})=0 \}$. Then during $[s_{0},s_{1}]$ all nodes in $B$ keeps the same relative phase, so by the previous case we can apply (i) for some time $t_{0}\le s_{1}+1\le s_{0}+3$. This shows (ii).  
\end{proof}

Our first application of lemma \ref{lemma:branchwidth} for 1-branches is given below.  

\begin{proposition}\label{prop:1-branch}
	Let $G=(V,E)$, $(\Sigma_{\bullet}(t))_{t\ge 0}$, $(\Lambda_{\bullet}(t))_{t\ge 0}$ as before. Suppose $G$ has a $1$-branch $B$ with leaf $u$. Then $\omega_{B}(s_{1}^{+})<1/4$ for some $s_{1}\le 13$ and the joint trajectory restricts on $G-L$ after time $t=16$.
\end{proposition}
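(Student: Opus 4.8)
The plan is to show that a $1$-branch reaches small branch width quickly and then invoke Lemma~\ref{lemma:branchwidth}~(ii) to get the restriction. Let $B$ be a $1$-branch with center $v$, leaf $u$, and root $w$, so $V(B)=\{v,u\}$ and $\deg(v)\le 2$ within $B$ (its only neighbors are $u$ and $w$). First I would record that $\deg(v)\le 3$ regardless of $G$ is not automatic, so instead the key observation is that $u$ is a leaf, hence by Proposition~\ref{prop:statebasic}~(i) we have $\sigma_u(t)\equiv 0$ for all $t>2$. Moreover, $v$ has at most one neighbor other than $w$ (namely $u$), so during any unit interval $P_v(t)$ can be triggered by $u$ at most once and by $w$ at most once; thus $P_v(t)$ occurs at most twice per unit time, which is strictly fewer than the four occurrences needed to push $\mu_v^3(t)$ up to $3$. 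This means $E_v(t)$ can essentially never occur once the memory variables settle, so on $B$ the center $v$ also behaves like a $4$-coupling node after a short transient.

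The main work is then a short chasing argument, in relative circular representation, to show $\omega_B(s_1^+)<1/4$ for some $s_1\le 13$. By Proposition~\ref{prop:blinking}, both $v$ and $u$ blink at least once in every $5$ seconds, so after time, say, $t=8$ each has blinked. Consider the first time $t_0\ge 8$ that $v$ blinks; re-center so $\Lambda_v(t_0)=1/2$. Since $v$ blinks at $t_0$, the leaf $u$ has $\Lambda_u(t_0^+)\in(0,3/4]$ (it is either pulled toward $v$ or was already left of $v$). If $u$ has state $0$ (true for $t>2$) and $\Lambda_u(t_0^+)\in(0,1/2)$ (i.e. $u$ is strictly left of $v$), then $u$ never pulls $v$ again and $v$'s next two blinks at $t_0$-type phase keep $\Lambda_v\equiv 1/2$ modulo perturbations from $w$; tracking one more blink of $u$, which gets pulled by $v$ into $[\Lambda_v,\Lambda_v+1/4)$, puts both oscillators within an arc of length $<1/4$. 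If instead $\Lambda_u(t_0^+)\in[1/2,3/4]$, then $u$ is right of or opposite to $v$; at the next blink of $v$ (within $1$ more second), $u$ gets pulled and the gap $\delta(u,v)$ or $\delta(v,u)$ shrinks below $1/4$. Each such correction takes at most a bounded number of blink cycles of $v$ and $u$, each of length $\le 1$, and perturbations from $w$ (at most once per second, always inhibitory) cannot enlarge the relevant arc faster than $v$'s own blinking contracts it — this is exactly the ``external fluctuation weaker than restoring force'' picture of Figure~\ref{fig:branchwidth1}. Counting: transient through $t=8$, then $t_0\le 8+5=13$ for the first blink of $v$ may be too generous, so more carefully I would take $t_0$ to be an early blink and budget the few subsequent blinks to land $\omega_B<1/4$ by $s_1\le 13$.

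Finally, given $\omega_B(s_1^+)<1/4$ with $s_1\le 13$, Lemma~\ref{lemma:branchwidth}~(ii) yields some $t_0\in[s_1,s_1+3]\subseteq[13,16]$ at which Lemma~\ref{lemma:branchwidth}~(i) applies, and (i) asserts precisely that the joint trajectory restricts on $G-L=G-\{u\}$ during $[t_0,\infty)$, hence after time $t=16$. The main obstacle I anticipate is the bookkeeping in the middle step: getting the explicit constant $13$ requires carefully choosing which blink of $v$ to call $t_0$ and bounding how many blink cycles the correction of $\Lambda_u$ can take while accounting for the worst-case interference from the root $w$; one must also be slightly careful that the memory variables $\mu_v$ have settled enough that $E_v(t)$ truly cannot occur, which is where the ``$P_v$ at most twice per unit time'' bound is used together with an argument like that in Proposition~\ref{prop:statebasic}~(ii). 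The qualitative structure, however, is identical to the chasing arguments already carried out for Lemma~\ref{lemma:branchwidth} and Proposition~\ref{prop:branch_basic}, so no new idea beyond careful constant-tracking should be needed.
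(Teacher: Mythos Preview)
Your overall strategy---a short local chasing argument on $B=\{u,v\}$ followed by Lemma~\ref{lemma:branchwidth}~(ii)---is exactly the paper's approach, and your final paragraph deducing restriction on $G-\{u\}$ by time $16$ is correct. However, the middle step (the case analysis) contains a genuine error that would make the argument fail as written.

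You have the pull direction reversed. In the paper's convention (Section~\ref{Section:Relative circular representation}), ``$u$ is left to $v$'' means $\delta_t(u,v)=\Lambda_u-\Lambda_v\bmod 1\in(0,1/2)$, i.e.\ $u$ is \emph{ahead} of $v$. With $\Lambda_v(t_0)=1/2$ and $\Lambda_u(t_0^+)\in(0,1/2)$, the leaf $u$ is \emph{right} of $v$ (behind), so when $u$ blinks next (within $1/2$ second) it \emph{does} pull $v$; conversely $v$ does \emph{not} pull $u$ in this configuration. Your description (``$u$ never pulls $v$ again'', ``$u$ gets pulled by $v$ into $[\Lambda_v,\Lambda_v+1/4)$'') therefore tracks the wrong oscillator in each case, and the subsequent bounds do not follow. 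The paper's actual split is: first discard $\Lambda_u(t_0^+)\in(1/4,3/4)$ (then $\omega_B(t_0^+)<1/4$ immediately); if $\Lambda_u(t_0^+)\in(0,1/4]$ then $u$ blinks within $1/2$ second and pulls $v$ down, giving $\omega_B<1/4$; the only delicate case is $\Lambda_u(t_0^+)=3/4$, where one must track whether $w$ pulls $v$ before $u$ does, and this sub-case can cost up to $3$ additional seconds.

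Two further points tied to the constant $13$. First, you need $t_0$ chosen so that $B_v(t_0)$ occurs \emph{and} $\sigma_v(t_0^+)=0$; the paper gets such $t_0\in[3,10]$ (first blink after $t=3$ by time $8$, then at most two more refractory blinks), and then $s_1\le t_0+3\le 13$. Taking merely ``first blink after $8$'' could land at $t_0=13$ with no room left. Second, your claim that $E_v(t)$ ``essentially never occurs'' is not justified and is in fact not used by the paper: if $\sigma_v(t_0)=2$ then $\mu_v^1(t_0^+)=1$, so in the hardest sub-case $E_v$ \emph{can} occur; the paper simply absorbs this by writing $\sigma_v(\tau_{u;1}^+)\in\{0,1\}$ and adding at most one extra second before returning to the easy case.
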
 

\begin{proof}
	Let $v$ and $w$ be the center and root $B$. By Proposition \ref{prop:statebasic} (i), $\sigma_{u}(t)\equiv 0$ for all $t\ge 3$. By Proposition \ref{prop:blinking}, there exists $t_{0}\in [3,10]$ such that both $B_{v}(t_{0})$ and $\sigma_{v}(t_{0}^{+})=0$ occur. Suppose without loss of generality that $\Lambda_{v}(t_{0})=1/2$. For each $x\in V$ and $i\ge 1$, denote by $\tau_{x;i}$ the $i^{\text{th}}$ blinking time of $x$ after time $t_{0}$. By Lemma \ref{lemma:branchwidth}, it suffices to show the first part of the assertion. We assume $\Lambda_{u}(t_{0}^{+})\notin (1/4,3/4)$ since otherwise $\omega_{B}(t_{0}^{+})<1/4$.

	\begin{figure*}[h]
		\centering
		\includegraphics[width = 0.85 \linewidth]{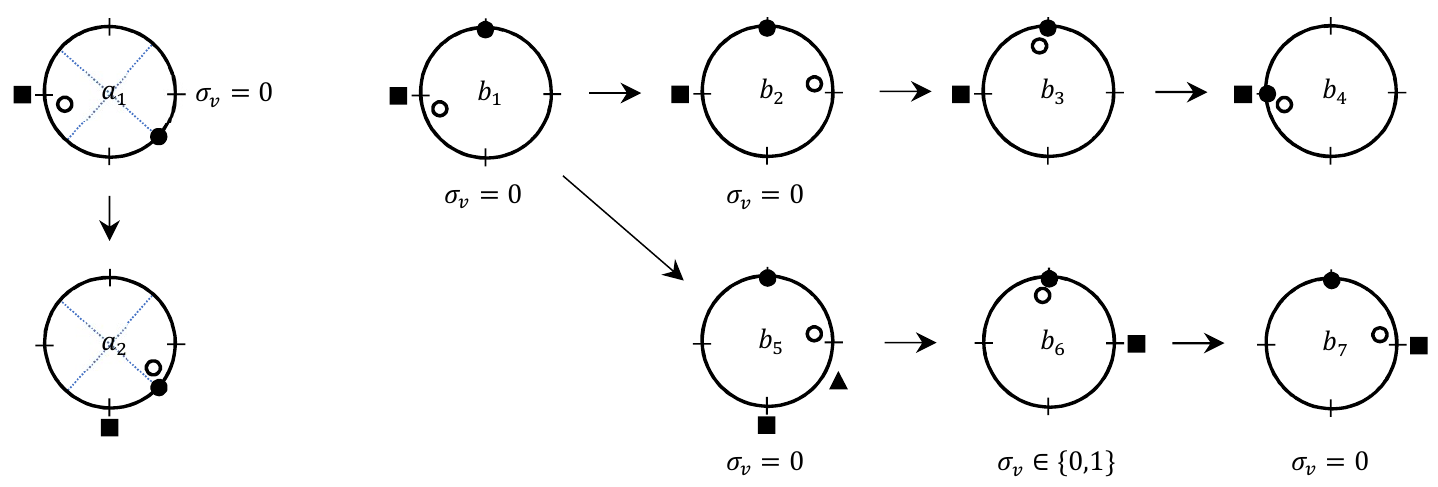}
		\caption{ 
			(In relative circular representation) Local dynamics chasing on 1-branch $B$ from time $t_{0}^{+}$. Its branch width becomes $<1/4$ in a constant time.  $\blacksquare =v$ (center), $\bullet = u$ (left), $\blacktriangle=w$ (root), and $\circ= $ activator. 
		}
		\label{fig:1branchpruning}
	\end{figure*} 	
	
	If $\Lambda_{u}(t_{0}^{+})\in (0,1/2]$, then $\tau_{u;1}\le t_{0}+1/2$ and $\omega_{B}(\tau_{u_{1};1}^{+})<1/4$ (see, e.g., Figure \ref{fig:1branchpruning} $a_{1}\rightarrow a_{2}$). Otherwise $\Lambda_{u}(t_{0}^{+})\in [1/2,3/4]$ and we may assume $\Lambda_{u}(t_{0}^{+})=3/4$ (as in Figure \ref{fig:1branchpruning} $b_{1}$). 
	If $\Lambda_{v}(t_{0}^{+}+1/2)\in (1/4,1/2]$, then $P_{v}(\tau_{u_{1};1})$ does not occur, $\tau_{v;1}\le t_{0}+5/4$, and $\omega_{B}(\tau_{v;1}^{+})<1/4$ (as in Figure \ref{fig:1branchpruning} $b_{1}\rightarrow b_{4}$). Otherwise $\Lambda_{v}(t_{0}^{+}+1/2)=1/4$, which requires $\tau_{w;1}\in [t_{0}+1/4,t_{0}+1/2]$. Then $P_{v}(\tau_{u;1})$ occurs, $\Lambda_{v}(\tau_{u;1}^{+})=0$, $\sigma_{v}(\tau_{u;1}^{+})\in \{0,1\}$, and $\tau_{v;1}=t_{0}+3/2$. Hence if we denote $t_{1}:=t_{0}+3/2+\mathbf{1}[\sigma_{v}(\tau_{v;1})=1]\le t_{0}+5/2$, at time $t_{1}^{+}$ we are back to the previous case in Figure \ref{fig:1branchpruning} $a_{1}$. Thus we get $\omega_{B}(t_{2}^{+})<1/4$ for some $t_{2}\le t_{1}+1/2 \le t_{0}+3\le 13$.
\end{proof}

The following proposition, combined with Lemma \ref{lemma:branchwidth}, shows that if the center of a branch ever jumps into state $1$ (gets excited), then its leaves become irrelevant of the global dynamics after some constant burn-in period. 

\begin{proposition}\label{prop:branch_excitation}
	Let $G=(V,E)$, $(\Sigma_{\bullet}(t))_{t\ge 0}$, and $(\Lambda_{\bullet}(t))_{t\ge 0}$ be as before. Let $B\subseteq G$ be a branch with center $v$ and set $L$ of leaves. Suppose $\sigma_{v}(t_{0})=0$ and $E_{v}(t_{0})$ occurs for some $t_{0}\ge 3$. Then $\omega_{B}(t_{1}^{+})<1/4$ for some $t_{1}\le t_{0}+8$. 
\end{proposition}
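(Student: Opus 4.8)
The plan is to show that once $E_{v}(t_0)$ occurs at a rested center $v$, the dynamic threshold $\mu_v^1$ drops to $1$, and then after $v$ blinks (ending the excitation), $v$ quickly re-excites and re-enters refractory mode, during which its leaves are left unperturbed and get ``swept'' into a half-circle around $v$. More precisely, I would first record the immediate consequences of the hypothesis: by the jump rule for $\sigma_v$, we have $\sigma_v(t_0^+)=1$, so $v$ is now refractory; and since $E_{v}(t_0)$ occurred with $\sigma_v(t_0)=0$ we had $\mu_v^1(t_0)=\mu_v^3(t_0)$, and upon the next blink of $v$ the memory refreshes to $\mu_v^1=3-2\cdot\mathbf 1(\sigma_v=2)$. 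The key point is to track what value $\mu_v^1$ takes over the next couple of blinks and to argue that $v$ re-enters a refractory window (state $1$ again, or state $2$) fast enough that Proposition~\ref{prop:branch_basic}(i) can be applied to conclude $\Lambda_u\in(0,1/2]$ (hence $\omega_B<1/4$ after recentering) for every leaf $u$.

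The core of the argument runs as follows. Let $\tau$ be the first blink of $v$ after $t_0$; by Proposition~\ref{prop:blinking}, $\tau\le t_0+1$, and during $(t_0,\tau]$ we have $\sigma_v\in\{1,2\}$, so $v$ ignores all pulses (its PRC is the identity in refractory mode) and $\Lambda_v$ is constant on $(t_0,\tau]$. At time $\tau$, $v$ blinks: $\sigma_v$ advances by $1$ (since $\sigma_v(\tau)\ne 0$), so $\sigma_v(\tau^+)\in\{2,0\}$. I split into the two cases. If $\sigma_v(\tau^+)=2$: then $v$ is still refractory through its next blink, and I can recenter so $\Lambda_v(\tau^+)=1/2$ and apply Proposition~\ref{prop:branch_basic}(i) with $t_0$ there equal to some time in $[t_0, t_0+2]$ — wait, Proposition~\ref{prop:branch_basic}(i) is stated for the transition $\sigma_v(t_0)=0,\sigma_v(t_0^+)=1$. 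So instead I would use the following: since $\sigma_v(\tau^+)=2$, $v$ stays refractory until it blinks once more, at time $\tau'\le\tau+1$, after which $\sigma_v(\tau'^+)=0$; but during $(\tau,\tau']$, $v$ has constant relative phase $\Lambda_v(\tau^+)$, and since $v$ is refractory it pulls each leaf $u$ with $\Lambda_u\in[\Lambda_v(\tau^+),\Lambda_v(\tau^+)+1)$ — actually each leaf that the activator sweeps past while $v$ sits still gets merged to or pulled toward $\Lambda_v(\tau^+)$. The cleanest route: after $v$'s next two blinks at times $\le \tau+2$, with $\Lambda_v$ constant throughout, every leaf has relative phase in a half-circle counterclockwise-adjacent to $\Lambda_v$, so recentering gives $\omega_B<1/4$; this mirrors exactly the reasoning in the proof of Proposition~\ref{prop:branch_basic}(i). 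If instead $\sigma_v(\tau^+)=0$ (so $\sigma_v$ went $1\to 2$? no — $\sigma_v(t_0^+)=1$ and it advances by $1$ only on a blink, so after one blink it is $2$, not $0$): I should recheck the $\sigma_v$ jump rule. The rule is $\sigma_v(t^+)-\sigma_v(t)=\mathbf 1[(\{\sigma_v=1\}\cap B_v)\cup\cdots]$, i.e., $1\to 2\to 0$ over two blinks. So from $\sigma_v(t_0^+)=1$ we need exactly two blinks of $v$ to return to rested, at times $\tau_1\le t_0+1$ and $\tau_2\le \tau_1+1\le t_0+2$, and on $(t_0,\tau_2]$ the node $v$ is refractory with $\Lambda_v$ constant (no pulls affect a refractory node's phase). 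That is the single clean case.

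Hence the skeleton is: (1) $\sigma_v(t_0^+)=1$; (2) $v$ is refractory and phase-frozen on $(t_0,\tau_2]$ where $\tau_2\le t_0+2$ is its second blink after $t_0$, and $\sigma_v(\tau_2^+)=0$; (3) because $v$ blinks (at least) twice at the same relative phase while all leaves have state $0$ for $t\ge 6\le t_0$ (Proposition~\ref{prop:statebasic}(i)) — here I need $t_0\ge 6$, but the hypothesis only gives $t_0\ge 3$, so I would first use Proposition~\ref{prop:blinking} to push forward to the first blink of $v$ at or after $\max(t_0,6)$, at the cost of at most $\max(0,6-t_0)+1\le 4$ extra seconds; actually it is cleaner to first advance to $t_0'\ge 6$ via the $\le 5$-second blink bound, costing $\le 5$ seconds, then run the $\le 2$-second argument above, for a total $\le t_0+7\le t_0+8$ — (4) apply the leaf-sweeping computation exactly as in the last paragraph of the proof of Proposition~\ref{prop:branch_basic}(i) to get every leaf's relative phase into $(\Lambda_v(\tau_2),\Lambda_v(\tau_2)+1/2]$, i.e., $\omega_B(\tau_2^+)<1/4$; so $t_1:=\tau_2\le t_0+8$. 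The main obstacle, and the place requiring genuine care, is reconciling the $t_0\ge 3$ hypothesis with the $t\ge 6$ requirement for leaves to be guaranteed rested (Proposition~\ref{prop:statebasic}(i) and Proposition~\ref{prop:branch_basic}), and checking that the ``wait to reach time $6$'' step does not itself let $v$ leave refractory and re-excite in a way that resets $\mu_v^1$ unfavorably — but since refractory $v$ ignores all input and we only need two blinks, and Proposition~\ref{prop:blinking} bounds the inter-blink gap, the bookkeeping closes with the stated slack of $8$ seconds.
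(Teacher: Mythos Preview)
Your proposal has a genuine gap at the crucial step. You correctly identify that after $E_v(t_0)$ the center $v$ goes refractory, its relative phase freezes, and after two blinks at times $\tau_1,\tau_2\le t_0+2$ it returns to $\sigma_v=0$; and you correctly invoke the ``leaf-sweeping'' mechanism of Proposition~\ref{prop:branch_basic}(i). But that proposition only guarantees $\Lambda_u(\tau_2^+)\in(0,1/2]$ for every leaf $u$ (with $\Lambda_v(\tau_2^+)=1/2$ after recentering). This yields $\omega_B(\tau_2^+)<1/2$, \emph{not} $\omega_B(\tau_2^+)<1/4$. Your sentence ``every leaf's relative phase into $(\Lambda_v(\tau_2),\Lambda_v(\tau_2)+1/2]$, i.e., $\omega_B(\tau_2^+)<1/4$'' conflates an arc of length $1/2$ with a width bound of $1/4$.

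What the paper actually does is use the configuration at time $s_0:=\tau_2$ (all leaves in the right half-circle, $\sigma_v=0$, and crucially $\mu_v^1(s_0^+)=1$ because $\sigma_v(s_0)=2$) as the \emph{starting point} for a further case analysis. The lowered threshold $\mu_v^1=1$ is essential here: with it, a second pull within the next window triggers $E_v$ again. The proof then splits on the number of leaves ($k=2$ versus $k\ge 3$) and on where the leaf phases sit within $(0,1/2]$, tracking pulls from the root $w$ and the leaves to show that either the center blinks with $\omega_B<1/4$ directly, or $E_v$ fires again and a second refractory sweep collapses the leaves onto at most two phases within a quarter-arc. This case analysis (occupying roughly three more seconds for $k=2$ and up to six for $k\ge3$) is where the real work lies, and it is entirely absent from your proposal. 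You allude to the threshold drop and ``quickly re-excites'' in your opening paragraph, but your skeleton (steps (1)--(4)) never uses either; without them the argument does not close.
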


\begin{proof}
	By Proposition \ref{prop:1-branch} we may assume $k\ge 2$. By Proposition \ref{prop:branch_basic} (i), all leaves in $B$ have state 0 for all times after $t\ge 3$. Let $s_{0}=\inf\{ s\ge t_{0}+1\,:\, \sigma_{v}(s)=0 \}\le t_{0}+2$. Then $\mu_{v}(s_{0}^{+})=(1,0,0)$. Let $\{u_{1},\cdots, u_{k}\}$ be the set of leaves in $B$. Denote $\lambda_{i}=\Lambda_{u_{i}}(t_{1}^{+})$ for $1\le i \le k$ and for each $x\in V$ and $j\ge 1$, denote by $\tau_{x;j}$ the $j^{\text{th}}$ time that $x$ blinks after time $s_{0}$. We may assume without loss of generality that $\Lambda_{v}(s_{0}^{+})=1/2$. Then by Proposition \ref{prop:branch_basic} (i), we may assume $1/2\ge \lambda_{0}>\lambda_{1}>\cdots>\lambda_{k}>0$. 
	
	\begin{figure*}[h]
		\centering
		\includegraphics[width = 0.98 \linewidth]{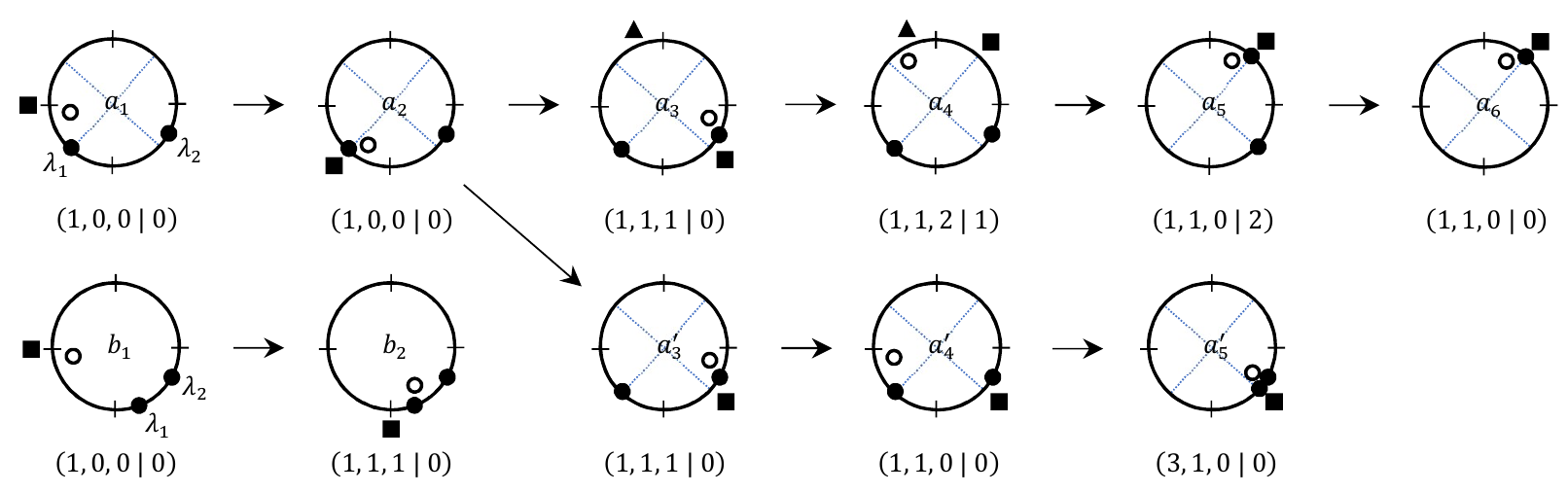}
		\vspace{-0.2cm}
		\caption{ (In relative circular representation) Example of local dynamics on $B+w$ during from time $s_{0}^{+}$ to $\tau_{v;2}^{+}$. $\blacksquare =v$ center, $\bullet=$leaves, $\blacktriangle=w$ root, and $\circ= $ activator. Below each local configuration, $(a,b,c \,|\, d)$ denotes $\mu_{v}=(a,b,c)$ and $\sigma_{v}=d$. 
		}
		\label{fig:2branch_rested}
	\end{figure*}
	
	First we show that if $k=2$, then $\omega_{B}(t_{1}^{+})<1/4$ for some $t_{1}\le s_{0}+3$.  Note that $\lambda_{2}\le (1/4,1/2]$ implies $\omega_{B}(s_{0}^{+})<1/4$, and $\lambda_{1}\in (0,1/4]$ gives $\omega_{B}(\tau_{u_{1};1}^{+})<1/4$ (as in Figure \ref{fig:2branch_rested} $b_{1}\rightarrow b_{2}$). So we may assume $\lambda_{1}\in (1/4,1/2]$ and $\lambda_{2}\in (0,1/4]$ as in Figure \ref{fig:2branch_rested} $a_{1}$. Note that $P_{v}(\tau_{u_{2};1})$ occurs and $P_{v}(t)$ occurs at most twice during $(s_{0}+1/4,s_{0}+1]$. Thus $\Lambda_{v}(s_{0}+1)\in [\lambda_{1}-1/2,1/4]$. If $\Lambda_{v}(s_{0}+1)\in (\lambda_{1}-1/2,1/4]$, then $P_{v}(t)$ does not occur during $[s_{0}+1,\tau_{v;1}]$ so $\tau_{v;1}\le s_{0}+3/4+1$ and $\Lambda_{v}(\tau_{v;1})\in (\lambda_{1}-1/2,1/4]$. This gives $\omega_{B}(\tau_{v;1}^{+})<1/4$ (see, e.g., Figure \ref{fig:2branch_rested} $a_{3}\rightarrow a_{5}$). Otherwise $\Lambda_{v}(s_{0}+1)=\lambda_{1}-1/2$, which requires $P_{v}(\tau_{w;1})$ to occur between $\tau_{u_{2};1}$ and $s_{0}+1$. Since $\mu_{v}^{1}(t)\equiv 1$ during $(s_{0},s_{0}+1]$, $E_{v}(\tau_{w;1})$ occurs. Then $\tau_{v;1}\le s_{0}+3/4+1$,  $\tau_{v;2}=\tau_{v;1}+1$, and $\omega_{B}(\tau_{v;2}^{+})=0$ (see Figure \ref{fig:2branch_rested} $a_{3}\rightarrow a_{6}$). This shows the assertion for $k=2$. 
	
	Now assume $k\ge 3$ and we show $\omega_{B}(t_{1}^{+})<1/4$ for some $t_{1}\le t_{0}+8$. Suppose $\lambda_{k}\in (0,1/4)$ and let $r=\min\{ 1\le i \le k\,:\, \lambda_{i}\in (0,1/2) \}$. If $r\le k-1$ as in Figure \ref{fig:3branch_excitation} $a_{1}$, then $\Lambda_{v}(\tau_{u_{r+1};1})\in (0,1/4)$ and $E_{v}(\tau_{u_{r+1};1})$ occurs, so we have $\omega_{B}(\tau_{u_{r+1};1}^{+}+2)<1/4$ and $\tau_{u_{r+1};1}^{+}+2\le s_{0}+1/2 \le t_{0}+5/2$. (See transitions $a_{1}\rightarrow a_{8}$ in Figure \ref{fig:3branch_excitation}).

	\begin{figure*}[h]
		\centering
		\includegraphics[width = 1 \linewidth]{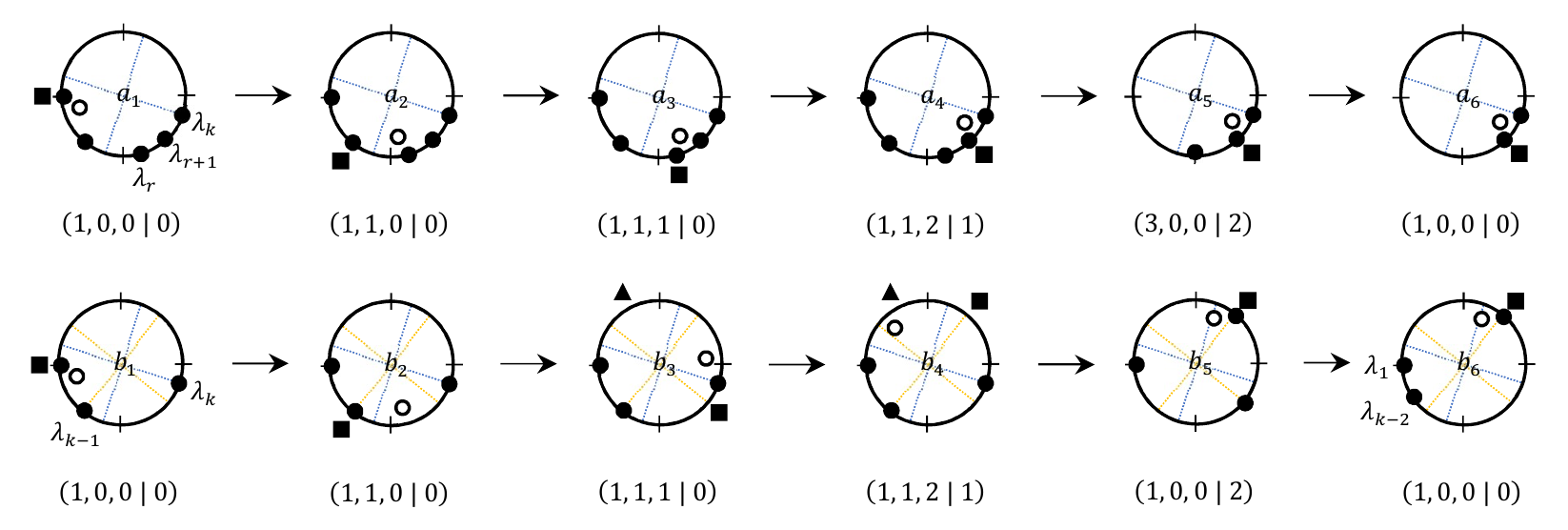}
		\vspace{-0.5cm}
		\caption{ 
			(In relative circular representation) A local configuration chasing on branch $B$ from time $s_{0}^{+}$. $\blacksquare =v$ center, $\bullet = \text{leaves}$, $\circ= $ activator, and $\blacktriangle = w$ root. Below each local configuration, $(a,b,c\,|\, d)$ abbreviates $\mu_{v}(t)=(a,b,c)$ and $\sigma_{v}(t)=d$.  
		}
		\label{fig:3branch_excitation}
	\end{figure*}  
	
	Suppose $r=k$ as in Figure \ref{fig:3branch_excitation} $b_{1}$. If $\Lambda_{v}(s_{0}+1)\in (0,1/2]$, then $\tau_{v;1}\le s_{0}+3/2$ and $\omega_{B}(\tau_{v;1}^{+})<1/4$, as similarly as in the previous case in Figure \ref{fig:3branch_excitation} $a_{1}\rightarrow a_{2}\rightarrow a_{3}'\rightarrow a_{5}'$. Hence we may assume $\Lambda_{v}(s_{0}+1)\in [1/2,1]$, which requires $\tau_{w;1}\in [s_{0}+1/2,s_{0}+1)$ and $P_{v}(\tau_{w;1})$ occurs. Then $E_{v}(\tau_{w;1})$ occurs and $\Lambda_{v}(\tau_{w;1}^{+})\in (3/4,1]$. Hence $\tau_{v;2}\le \tau_{w;1}+2\le s_{0}+ 3/4+2$ and at time $\tau_{v;2}^{+}$, either the leaves in $B$ have two distinct phases or we are back to the previous case of $r\le k-1$ (see the transition $b_{1}\rightarrow b_{8}$ in Figure \ref{fig:3branch_excitation}). Hence by the previous cases, we have $\omega_{B}(t_{1}^{+})<1/4$ for some $t_{1}\le \tau_{v;2}+3\le s_{0}+3/4+5 \le t_{0}+3/4+7$.  This shows the assertion.
\end{proof}

Lastly in this section, we summarize some of our observations in the next proposition for later use. 

\begin{proposition}\label{prop:2branch_rested}
	Let $G=(V,E)$, $(\Sigma_{\bullet}(t))_{t\ge 0}$, and $(\Lambda_{\bullet}(t))_{t\ge 0}$ be as before. Let $B\subseteq G$ be a $2$-branch. If $\omega_{B}(t)\ge 1/4$ for all $t\in [0,16]$, then $\sigma_{x}(t)\equiv 0$ for all $t\ge 7$ and $x\in V(B)$.
\end{proposition}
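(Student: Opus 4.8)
The plan is to argue by contradiction: suppose some node $x\in V(B)$ ever leaves the rested state after time $7$, and derive that the branch width on $B$ must drop below $1/4$ at some point in $[0,16]$, contradicting the hypothesis. Let $B$ have center $v$, root $w$, and leaves $u_1,u_2$. First I would dispense with the leaves: by Proposition~\ref{prop:statebasic}(i), since $\deg(u_i)=1$, we have $\sigma_{u_i}(t)\equiv 0$ for all $t>2$, so leaves cannot be the offending node $x$. Hence the only way the conclusion can fail is if $\sigma_v(t)\ne 0$ for some $t\ge 7$. Because the state variable enters $\{1,2\}$ only via an occurrence of $E_v$ at a rested moment (the jump rule for $\sigma_v$), there must be some $t_0$ with $\sigma_v(t_0)=0$ and $E_v(t_0)$ occurring; moreover we can take the \emph{first} such $t_0$, and one checks $t_0\ge 7$ forces $t_0$ to be at least $3$ (so Proposition~\ref{prop:branch_excitation} applies) — in fact we only need $t_0\ge 3$, which is automatic once $t_0\ge 7$.

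Next I would invoke Proposition~\ref{prop:branch_excitation}: since $\sigma_v(t_0)=0$ and $E_v(t_0)$ occurs with $t_0\ge 3$, we get $\omega_B(t_1^{+})<1/4$ for some $t_1\le t_0+8$. The delicate point is that $t_0$ could a priori be large, so $t_1$ need not lie in $[0,16]$. To handle this I would show that $t_0$ must in fact be small. Here is where Proposition~\ref{prop:blinking} does the work: $B_v$ occurs at least once in every window of length $5$, so there is a blinking time of $v$ in $(0,5]$; combined with the state-update rules, one sees $v$ returns to (or starts in) a rested configuration by a bounded time, and since $t_0$ is the \emph{first} excitation moment with $\sigma_v=0$, if $t_0>8$ then $v$ has been continuously rested on, say, $[6,t_0]$, and Proposition~\ref{prop:branch_basic}(ii) together with the branch width lemma (Lemma~\ref{lemma:branchwidth}(ii)) would already have forced $\omega_B<1/4$ at some point before $t_0$ — but then by Lemma~\ref{lemma:branchwidth}(i) the branch width stays below $1/4$ thereafter and $v$ is never pulled in a way that can trigger $E_v$, contradicting that $E_v(t_0)$ occurs. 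So in fact $t_0\le 8$, whence $t_1\le t_0+8\le 16$, and $\omega_B(t_1^{+})<1/4$ with $t_1\in[0,16]$ contradicts the hypothesis $\omega_B(t)\ge 1/4$ on $[0,16]$.

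It remains to conclude: having ruled out any excitation of $v$ after time (essentially) $7$, the state variable $\sigma_v$, once it returns to $0$, stays $0$. By Proposition~\ref{prop:blinking}, $v$ blinks at least once in $(2,7]$, and whatever refractory state it may have been in at time $0$ is cleared after at most two blinks, i.e. within $5$ more seconds; a short bookkeeping with the jump rule for $\sigma_v$ shows $\sigma_v(t)=0$ for all $t\ge 7$. Combined with $\sigma_{u_i}(t)\equiv 0$ for $t\ge 3$, this gives $\sigma_x(t)\equiv 0$ for all $t\ge 7$ and all $x\in V(B)$.

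The main obstacle I anticipate is the bounding of $t_0$ in the middle paragraph: making rigorous the claim that a node that is continuously rested for long enough on a $2$-branch either blinks into a low-width configuration or must have had its branch width already drop below $1/4$. This needs a careful chase with Proposition~\ref{prop:branch_basic}(ii) (which gives $\Lambda_{u}(t_0^+)\in[1/4,3/4]$ for each leaf after a blink of a long-rested center) and then Lemma~\ref{lemma:branchwidth}(ii); the arithmetic of the burn-in constants ($6$, $7$, $8$, $16$) has to be tracked so that everything fits inside the window $[0,16]$. Once that estimate is in place, the rest is a routine application of the already-established propositions.
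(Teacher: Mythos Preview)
Your overall strategy---reduce to the center $v$, and show that any excitation of $v$ forces $\omega_B<1/4$ within the window $[0,16]$ via Proposition~\ref{prop:branch_excitation}---matches the paper. But the middle paragraph, where you try to bound the first excitation time $t_0\le 8$, has a real gap, and the paper avoids it entirely by a shorter route you have overlooked.

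The gap: you claim that if $t_0>8$ then ``Proposition~\ref{prop:branch_basic}(ii) together with the branch width lemma (Lemma~\ref{lemma:branchwidth}(ii)) would already have forced $\omega_B<1/4$.'' But Proposition~\ref{prop:branch_basic}(ii) only says that after a blink of a long-rested center the leaves lie in $[1/4,3/4]$ (with $\Lambda_v=1/2$), which gives $\omega_B\le 1/2$, not $\omega_B<1/4$. Lemma~\ref{lemma:branchwidth}(ii) needs $\omega_B<1/4$ as \emph{input}, so you cannot chain them. Your attempted contradiction (``$v$ is never pulled in a way that can trigger $E_v$'') also does not follow from Lemma~\ref{lemma:branchwidth}(i), since the root $w$ can still pull $v$. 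As you yourself flag, this step is the obstacle---and in fact it does not go through as written.

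The paper's fix is to use Proposition~\ref{prop:statebasic}(ii), which you never invoke. The argument is: pick a blink time $t_0\in(2,7]$ of $v$ (Proposition~\ref{prop:blinking}). If $\sigma_v(t_0)\in\{1,2\}$, trace back to an excitation at some $s_0<t_0\le 7$ and apply Proposition~\ref{prop:branch_excitation} to get $\omega_B<1/4$ by time $s_0+8\le 15$, contradicting the hypothesis. Hence $\sigma_v(t_0)=0$, so the blink resets $(\mu_v^1,\mu_v^3,\sigma_v)(t_0^+)=(3,0,0)$; since $\deg(v)=3$, Proposition~\ref{prop:statebasic}(ii) then gives $\sigma_v(t)\equiv 0$ for all $t>t_0$, and in particular for all $t\ge 7$. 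This replaces your entire ``bound $t_0$'' paragraph with a one-line appeal to a result already in hand.
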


\begin{proof}
	Combining Propositions \ref{prop:statebasic} (i), the assertion holds if $x$ is a leaf in $B$. Let $v$ be the center of $B$. By Proposition \ref{prop:blinking}, $v$ blinks at some time $t_{0}\in [2,7]$. If $\sigma_{v}(t_{0})\in \{1,2\}$, then there exists $s_{0}\in [0,t_{0})$ such that $\sigma_{v}(s_{0})=0$ and $E_{v}(s_{0})$ occurs. So by Proposition \ref{prop:branch_excitation} we get $\omega_{B}(t_{1})<1/4$ for some $t_{1}\le s_{0}+8 \le 15$, a contradiction. Hence $\sigma_{v}(t_{0})=0$, so $\mu_{v}(t_{0}^{+})=(3,0,0)$ and $\sigma_{v}(t_{0}^{+})=0$. Then $\sigma_{v}(t)\equiv 0$ for all $t\ge t_{0}\ge 7$ by Proposition \ref{prop:statebasic} (ii). 
\end{proof}

\vspace{0.3cm}
\section{Proof of main results assuming three lemmas about local limit cycles}
\label{Section:Proof of main results}

In this section, we prove our main results, Theorems \ref{4treethm} and \ref{A4Ctreethm}, assuming a key lemma (Lemma \ref{key}). We derive Lemma \ref{key} from three lemmas of about local limit cycles on branches (Lemmas \ref{lemma:branch_attraction}, \ref{lemma:branchorbit_a}, and \ref{lemma:branchorbit_b}), which we will prove in Section \ref{section:locallemmas}.

Let $T=(V,E)$ be a finite tree. A branch $B\subseteq T$ is called \textit{terminal} if all but at most one neighbors of its root are either leaves or centers of other branches. For instance, suppose $V(T)\ge 3$ and let $P\subseteq T$ be a longest path with vertices $v_{0},v_{1},v_{2},\cdots,v_{l}$ in order. Then $v_{1}$ is the center of a terminal branch rooted at $v_{2}$. Moreover, if there is anther branch $B'\ne B$ rooted at $v_{2}$, then it is also a terminal branch. 

In  Lemma \ref{lemma:branchwidth}, we showed that if any branch ever has branch width $<1/4$, then we can delete its leaves without affecting the global dynamics. In Lemma \ref{key}, we show that the adaptive 4-coupling is capable of breaking possible symmetries of local phase configurations on $B$, and shrinks its branch width below the threshold $1/4$ in a constant time: 

\begin{lemma}[key lemma]\label{key}
	Let $T=(V,E)$ be a finite tree with maximum degree $\Delta$ and let $(\Sigma_{\bullet}(t))_{t\ge 0}$ be a joint trajectory on $T$. Let $B$ be any terminal branch in $T$. Then $\omega_{B}(t_{1}^{+})<1/4$ for some $t_{1}\in [4,D_{\Delta}]$, where $D_{\Delta}=94 + 63 \cdot \mathbf{1}[ \Delta\ge 4 ]$.
\end{lemma}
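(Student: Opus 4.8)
The plan is to prove the key lemma by a case analysis on the structure of the terminal branch $B$ and on whether its center ever gets excited. Write $v$ for the center of $B$, $w$ for its root, $L$ for its set of leaves, and $k=|L|$. The overall strategy: we want to show that within a constant amount of time (at most $D_\Delta$ seconds) the branch width $\omega_B(t)$ drops below $1/4$, after which Lemma~\ref{lemma:branchwidth} takes over. I expect the proof to split into two regimes according to whether the center $v$ ever gets excited, i.e.\ whether $E_v(t)$ occurs (with $\sigma_v(t)=0$) at some time in a suitable initial window.

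First, suppose $E_v(t_0)$ occurs for some $t_0$ that we can locate in a constant-length window. If $k\le 3$ this cannot actually happen for the wrong reasons by Propositions~\ref{prop:statebasic}(ii) and \ref{prop:2branch_rested}; the interesting case is $\Delta\ge 4$ and $k$ large. In that regime I would first use Proposition~\ref{prop:blinking} to pin down a blinking time of $v$ in $[2,7]$ and then argue that, because $\deg(v)$ may be large (it sees $k$ leaves plus $w$, and terminality controls the rest), the pull count $\mu_v^3$ must reach the dynamic threshold $\mu_v^1$ reasonably quickly, forcing $E_v(t_0)$ with $t_0$ bounded by a constant. Once $E_v(t_0)$ occurs I apply Proposition~\ref{prop:branch_excitation} directly to conclude $\omega_B(t_1^+)<1/4$ for some $t_1\le t_0+8$, which is $\le D_\Delta$. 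The constant $63\cdot\mathbf 1[\Delta\ge4]$ in $D_\Delta$ is exactly the budget for the extra work needed to \emph{force} the first excitation in the high-degree case; this is where I expect most of the bookkeeping to live.

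Second, suppose $v$ never gets excited in the relevant window, so $\sigma_v(t)\equiv 0$ and the adaptive dynamics on $B$ coincides with the plain 4-coupling (Proposition~\ref{prop:statebasic}(ii)–(iii), applied to the branch, whose non-leaf nodes all have the reduced behaviour once $\mu_v$ refreshes to $(3,0,0)$ at a blinking time). Now $B$ behaves as a 4-coupled branch perturbed only by the single root $w$, which can pull $v$ at most once per second. For $k=1$ this is exactly Proposition~\ref{prop:1-branch}, giving $\omega_B<1/4$ by time $13$. For $k\ge 2$ I would invoke the three local limit-cycle lemmas (Lemmas~\ref{lemma:branch_attraction}, \ref{lemma:branchorbit_a}, \ref{lemma:branchorbit_b}): Lemma~\ref{lemma:branch_attraction} should show the branch configuration is attracted into a small family of symmetric local limit cycles, and Lemmas~\ref{lemma:branchorbit_a} and \ref{lemma:branchorbit_b} should enumerate those cycles and show that under the occasional root perturbation (which, crucially, is not \emph{too} frequent — this is where Proposition~\ref{prop:E_v(t)_needs_enough_pull} is used, since a genuine limit cycle without excitation cannot sustain enough pulls) the symmetry is eventually broken and $\omega_B$ falls below $1/4$. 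Summing the burn-in constants from Proposition~\ref{prop:blinking} (reaching a clean blinking time of $v$ with refreshed memory) plus the attraction and symmetry-breaking times yields the bound $t_1\le 94$.

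The main obstacle I anticipate is the high-degree excitation-forcing step: showing that when $\Delta\ge 4$ and $v$ has not yet been excited, the leaves of $B$ (possibly together with $w$) necessarily pull $v$ often enough, within a \emph{bounded} time window independent of $k$, to trigger $E_v(t_0)$. The subtlety is that $\mu_v^3$ only counts certain "effective" pulls (those recorded against $\mu_v^2$ and the $\beta_v=1/4$ condition), so one has to argue that among any sufficiently long run of blinks by $v$, enough of the incoming pulls are of the counted type — otherwise $v$'s phase would be pushed forward so fast that $v$ blinks early, contradicting a fixed blinking schedule, or the width already collapsed. Tracking this against the memory update rules in Algorithm~\ref{algorithm:A4C} while keeping all constants explicit is the bulk of the work; the non-excited case, by contrast, should reduce cleanly to the plain 4-coupling and the three cited local lemmas.
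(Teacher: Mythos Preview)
Your proposal misidentifies where the real work lies. The excitation case you describe first is not a separate top-level case in the paper: it is already absorbed into Lemma~\ref{lemma:branch_attraction} (via Propositions~\ref{prop:3branch_gen_to_right_half} and~\ref{prop:branch_excitation}), whose conclusion is that either $\omega_B<1/4$ or the leaves of $B$ have at most two distinct phases and $B$ sits in one of the two configurations of Figure~\ref{fig:2branch_limiting_config}. So after $E_{\Delta-1}$ seconds the problem is reduced to an effective 2-branch, and no further ``forcing of excitation'' argument is needed for $\Delta\ge 4$; that is not the obstacle.

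The genuine gap is in your second case. You write that Lemmas~\ref{lemma:branchorbit_a} and~\ref{lemma:branchorbit_b} ``show that under the occasional root perturbation \ldots\ the symmetry is eventually broken and $\omega_B$ falls below $1/4$.'' They do not. Each of these lemmas outputs a \emph{dichotomy}: either $\omega_B<1/4$, or $B+w$ is locked into one of the explicit local limit cycles in Figure~\ref{fig:2branch_limit}. These limit cycles are genuinely sustainable if the root $w$ is allowed to behave adversarially, so nothing in Lemmas~\ref{lemma:branch_attraction}--\ref{lemma:branchorbit_b} alone rules them out. The content of the key lemma is precisely to exclude these limit cycles, and this is done by \emph{going one level up} to the root $w$ and exploiting terminality: using Lemma~\ref{lemma:branchorbit_a}(iv), one shows that if cycle~(a) persists then $w$ itself becomes the center of a branch $B_w$ in a reduced tree, reapplies the trichotomy to $B_w$, and checks that neither limit cycle on $B_w+w'$ is compatible with the one already running on $B+w$; for cycle~(b) one first reduces (again via terminality and the structure of the cycle) to the situation where $B$ is the \emph{unique} terminal branch at $w$, and then counts pulls on $w$ to reach a contradiction. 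Your outline contains no trace of this root-level argument, and without it the proof does not close.
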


Assuming this lemma, we give a proof of Theorem \ref{4treethm} and \ref{A4Ctreethm} here. The idea is simple: we can then restrict the joint trajectory on $T$ to the subtree obtained by deleting the leaves in all terminal branches in $T$, and use induction on the diameter.

\begin{proof}[Proof of Theorem \ref{4treethm} and \ref{A4Ctreethm}] 
	
	For Theorem \ref{4treethm} (ii), let $T=(V,E)$ be a finite tree with a node $v$ of degree $m\ge 4$. Then we can basically reproduce the example given in Figure \ref{fig:ex_4C}, Namely, let $T_{1},\cdots,T_{m}$ be the connected components of $T-v$. Consider a phase configuration $\phi_{\bullet}(0):V\rightarrow S^{1}$ such that $\phi_{T_{i}}(0)\equiv i/4$ mod $1$ for $i=1,2,\cdots,m$ and $\phi_{v}(0)=1/4$. Then all nodes in each component $T_{i}$ blinks simultaneously once in every 1 second, and the `center' $v$ is pulled by 1/4 in phase each time one of its neighbor blink. As in the example in Figure \ref{fig:ex_4C}, $v$ gets pulled constantly and never blinks. Hence $\phi_{\bullet}(t)$ does not synchronize under the 4-coupling.   
	
	Now we show Theorem \ref{A4Ctreethm}. By Proposition \ref{prop:statebasic} (iii), this also yields Theorem \ref{4treethm} (i). Let $T=(V,E)$ be a tree with diameter $d$ and maximum degree $\Delta$.  Fix an arbitrary trajectory $(\Sigma_{\bullet}(t))_{t\ge 0}$ of joint configurations on $T$ evolving through the adaptive 4-coupling. We use induction on $d$ to show that arbitrary dynamics synchronizes by time $C_{\Delta} d$. Note that  $C_{\Delta}\le (D_{\Delta}+8)/2$. For $d\le 1$, $T$ is isomorphic to a singleton or $K_{2}$ and the assertion is easy to check. We may assume $d\ge 2$. Let $T'$ be the subtree of $T$ obtained by deleting all leaves in all terminal branches, which has diameter $d-2$. 
	
	By Lemmas \ref{key} and \ref{lemma:branchwidth}, the dynamic on $T$ restricts on $T'$ after time $D_{\Delta}+3$.  By the induction hypothesis, all nodes in $T'$ synchronize by time $t_{1}\in [0,D_{\Delta}+3+C_{\Delta}(d-2)]$. Also note that for each node $u\in V(T')$, $\sigma_{u}(t)\equiv 0$ and $P_{u}(t)$ never occurs during $[t_{1}+2,\infty)$. Now we may contract $T'$ into a single node, say, $v$. The resulting tree $T/T'$ is a isomorphic to a star with center $v$ and maximum degree $|V(T-T')|$. Note that $v$ blinks once in every 1 second after time $t\ge t_{1}$ in the induced dynamics. When it blinks at some time $t_{2}\in (t_{1}+2,t_{1}+3]$ all leaves in $T/T'$ must be left to $v$, since otherwise $P_{v}(t)$ occurs during $(t_{2},t_{2}+1/2]$, contrary to our assumption. This implies $\omega(\phi_{\bullet}(t_{2}^{+}))\le 1/4$ and $\omega(\phi_{\bullet}(t_{2}^{+}+1))=0$. Thus we obtain synchrony on $T$ by time $t=t_{2}+2 \le t_{1}+5 \le D_{\Delta}+C_{\Delta}(d-2)+8 \le C_{\Delta}d$. This completes the induction. This shows the assertion.
\end{proof}

Note that Proposition \ref{prop:1-branch} yields a special case of Lemma \ref{key} when $B$ is a 1-branch. Our strategy for the general case is the following. We first show that if $\omega_{B}(t)\ge 1/4$ for all $t\le \mathtt{b}$, then $B+w\subseteq G$ undergoes a particular `conditional' local limit cycle during some window inside $[0,b]$. Now if we assume $G$ is a tree and $B$ is a terminal branch, then each of `non-synchronizing' terminal branches rooted at $w$ induces a constraint on the local dynamics of the common root $w$. These constraints are not compatible, so we can assume that there is at most one non-synchronizing terminal branch rooted at $w$, whose local limit cycle must be `supported' by the input on $w$ from its leaves plus at most one extra neighbor. This will be shown to be impossible for large $\mathtt{b}$, thereby showing that $\omega_{B}(t_{0})<1/4$ for some $t_{0}\le \mathtt{b}$.  

In order to give a more precise description of the two local limit cycles, consider the following two particular local configurations on a branch. Suppose for some $t_{0}\ge 0$, we have $\phi_{v}(t_{0}^{+})=0$ and $\{ \phi_{u}(t_{0}^{+})\,:\, u\in N(v)\setminus \{w \}  \} = \{ 0,\lambda \}$ for some $\lambda\in (1/4, 1/2)$. In relative circular representation, such a local instance is represented by Figure \ref{fig:2branch_limiting_config} $a$, and we say that ``the branch $B$ has local configuration in Figure \ref{fig:2branch_limiting_config} $a$ at time $t_{0}^{+}$". Similarly, the local configuration in Figure \ref{fig:2branch_limiting_config} $b$ represents the instance at $t_{0}^{+}$ where $\phi_{v}(t_{0}^{+})=0$ and $\{ \phi_{u}(t_{0}^{+})\,:\, u\in N(v)\setminus \{w \}  \} = \{ 0,1/4 \}$. 

\begin{figure*}[h]
	\centering
	\includegraphics[width = 0.35 \linewidth]{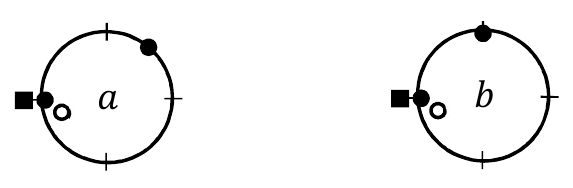}
	\caption{ (In relative circular representation) Two `absorbing' local configurations on a branch, where $\blacksquare$, $\bullet$, and $\circ$ denote the relative phases of center, leaves, and the activator, respectively. 
	}
	\label{fig:2branch_limiting_config}
\end{figure*} 

In the following lemma, we show that any $k$-branch $B$ will either have a small branch width $<1/4$ or have one of the two local configurations in Figure \ref{fig:2branch_limiting_config} in at most first 32 seconds. 

\begin{lemma}\label{lemma:branch_attraction}
	Let $G=(V,E)$ be a finite graph and let $(\Sigma_{\bullet}(t))_{t\ge 0}$ and $(\Lambda_{\bullet}(t))_{t\ge 0}$ be as before. Let $B\subseteq G$ be a $k$-branch rooted at $w\in V$. Define $E_{k}=16+21\cdot \mathbf{1}[ k\ge 3 ]$. Then there exists $t_{1}\in [8,E_{k}]$ such that we have either of the following. 
	\vspace{0.2cm}
	\begin{description}
		\item[(i)] $\omega_{B}(t_{1}^{+})<1/4$.
		\vspace{0.1cm}
		\item[(ii)] At time $t_{1}^{+}$, all leaves in $B$ have at most two distinct phases and $B$ has one of the two local configurations in Figure \ref{fig:2branch_limiting_config}. 
	\end{description}
\end{lemma}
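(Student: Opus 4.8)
The plan is to track the local dynamics on $B+w$ in relative circular representation, exploiting two facts: by Proposition \ref{prop:statebasic} (i) every leaf of $B$ is rested for $t \ge 3$, and by Proposition \ref{prop:blinking} the center $v$ blinks at least once every five seconds. First I would fix a time $t_0$ in a small window (say $t_0 \in [3,10]$, or a slightly larger window if $v$ must be waited out of its refractory state, as in the proof of Proposition \ref{prop:2branch_rested}) at which $B_v(t_0)$ occurs; I may also assume $\sigma_v(t_0^+)=0$ after a bounded delay, since if $E_v$ ever fires while $v$ is rested then Proposition \ref{prop:branch_excitation} already gives conclusion (i) within $8$ more seconds, which is comfortably inside $E_k$. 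So the substantive case is $\sigma_v(t)\equiv 0$ on the relevant window, i.e. the branch runs the plain $4$-coupling perturbed only by the single root neighbor $w$, which by inhibition can pull $v$ at most once per second.

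The core of the argument is the ``merging'' feature of the $4$-coupling PRC: whenever $v$ blinks, each leaf $u$ with $\Lambda_u \in (0,1/4]$ (right of $v$) merges onto $\Lambda_v$, while a leaf with $\Lambda_u\in(1/4,1/2]$ shifts by $-1/4$, and a leaf left of $v$ is untouched. Normalizing $\Lambda_v(t_0^+)=1/2$, after at most one more blink of $v$ one sees (Proposition \ref{prop:branch_basic}) that all leaves sit in $(0,1/2]$, and iterating the blinks of $v$ forces the leaf phases to collapse: repeatedly, the leaves closest to $v$ on the right get absorbed, and the leaves on the far side of $v$ drift toward it. I would argue that within a bounded number of blink cycles of $v$ the leaves are reduced to at most two distinct values $\{0,\lambda\}$ relative to $v$; if $\lambda \le 1/4$ or $\lambda\ge 3/4$ we land in case (i) ($\omega_B<1/4$) directly, and if $\lambda\in(1/4,1/2)$ together with a leaf merged onto $v$'s phase we land in case (ii) — precisely Figure \ref{fig:2branch_limiting_config}$a$; the configuration $\{0,1/4\}$ (Figure \ref{fig:2branch_limiting_config}$b$) is the boundary case that arises when $w$ pulls $v$ just as a leaf would have merged. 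The bookkeeping splits into $k=2$ versus $k\ge 3$ — for $k=2$ I expect the window $[8,16]$ to suffice (only two leaves to coalesce, one $w$-perturbation per cycle), matching $E_2=16$; for $k\ge 3$ the extra $21$ seconds covers the worst case where $w$ repeatedly disrupts the coalescence, but since $w$ fires at most once per second and each firing can cost only $O(1)$ extra blink cycles of $v$ (each cycle $\le 3/2$ seconds by Proposition \ref{prop:blinking}), the process still terminates in a constant number of cycles.

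The main obstacle will be the casework around the root $w$: unlike the isolated-branch analysis in Proposition \ref{prop:branch_excitation}, here $w$'s blinking is arbitrary, so I must show that no adversarial schedule of $w$-pulses can indefinitely postpone the collapse of the leaf phases. The key quantitative point is monotonicity — the multiset of leaf phases, viewed as arcs relative to $\Lambda_v$, can only contract or stay fixed at each blink of $v$ that is not disturbed by $w$, and a $w$-pulse during a cycle changes $\Lambda_v$ by at most $1/4$ and hence can ``un-merge'' at most a bounded amount; combined with the fact that $w$ acts at most once per second while $v$ blinks at least once per $3/2$ seconds (when rested, after possibly one excitation-free recovery — Proposition \ref{prop:E_v(t)_needs_enough_pull} controls the recovery time), the number of $w$-disrupted cycles before the leaf phases are trapped in an arc of length $<1/4$ or in one of the two Figure \ref{fig:2branch_limiting_config} configurations is bounded by a constant, yielding the stated $E_k$. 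I would organize the final write-up as a short induction on the number of distinct leaf phases, with the constants tuned to $16$ and $16+21$ by carefully counting cycles in the two worst-case chase diagrams.
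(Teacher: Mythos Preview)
Your high-level dichotomy --- either $E_v$ fires at some point (and Proposition~\ref{prop:branch_excitation} finishes) or $\sigma_v\equiv 0$ on the whole window (and one argues merging directly) --- is logically sound, and for $k\le 2$ it is essentially how the paper proceeds: Proposition~\ref{prop:2branch_rested} pins $\sigma_x\equiv 0$ on $B$ after time~$7$, and then Proposition~\ref{2branchtoabsorbing} is an explicit six-case forward chase (Figure~\ref{fig:2branch_0}) rather than a monotonicity argument. Your monotonicity sketch for $k=2$ could perhaps be made rigorous, but as written it does not give the constants.

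The real gap is in the $k\ge 3$ case. Your plan is to assume $\sigma_v\equiv 0$ and then argue that repeated blinks of $v$, perturbed at most once per second by $w$, force the leaf phases to coalesce. But this is exactly the regime where the plain $4$-coupling can fail: Theorem~\ref{4treethm}(ii) exhibits a periodic orbit on a star with $\ge 4$ leaves in which the center is pulled every $1/4$ second, never blinks, and the four leaf phases stay fixed forever. Of course Proposition~\ref{prop:blinking} rules out ``never blinks'' in the adaptive model --- but only \emph{because} the proof of Proposition~\ref{prop:blinking} uses that $E_v$ must fire if $v$ is pulled four times in a window. In other words, the very fact that keeps $v$ blinking is the excitation mechanism, so the hypothesis $\sigma_v\equiv 0$ is not a benign simplification you can analyze by pure merging; it is a strong constraint whose consequences you have not extracted. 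Your monotonicity claim (``the multiset of leaf phases relative to $\Lambda_v$ can only contract at each blink of $v$'') is also not true as stated: between blinks of $v$ the leaves blink and drag $\Lambda_v$ around, so arcs measured relative to $\Lambda_v$ are not monotone.

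The paper runs the argument in the opposite direction. Proposition~\ref{prop:3branch_gen_to_right_half} starts from a blink of $v$ at some $t_0\in[10,15]$ with $\sigma_v(t_0)=0$ and $\mu_v(t_0^+)=(3,0,0)$, and then \emph{tracks the pull counter $\mu_v^3$} through the subsequent leaf and root blinks. The point of the case analysis (Figures~\ref{fig:3branch_gen_1}--\ref{fig:3branch_gen_2}) is to show that whenever the leaf phases fail to collapse to two values and the branch width stays $\ge 1/4$, the counter $\mu_v^3$ is driven up to $3$ and $E_v$ is \emph{forced} within $t_0+6$. So the trichotomy (small width / $\le 2$ leaf phases / excitation) is established directly, and then either branch reduces to the $k\le 2$ case by time~$21$ or to Proposition~\ref{prop:branch_excitation} by time~$21+8$, giving $E_k=37$. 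The missing idea in your sketch is precisely this: for $k\ge 3$ you must argue that non-collapse forces excitation, not that non-excitation forces collapse.
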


Lemma \ref{lemma:branch_attraction} reduces it down to analyzing possible local orbits on $B$ involving those two local configurations. We will observe that starting from either of the two local configurations in Figure \ref{fig:2branch_limiting_config}, it is indeed possible to have large branch width $\omega_{B}\ge 1/4$ for arbitrarily long periods of time by going through one of the two local limit cycles on $B+w\subseteq T$ in Figure \ref{fig:2branch_limit}.

We next address that there are essentially no other ways to avoid having small branch width $<1/4$ than going through the local limit cycles in Figure \ref{fig:2branch_limit}. More precisely, for the local configuration \ref{fig:2branch_limiting_config} $a$, we will show the following lemma.

\begin{lemma}\label{lemma:branchorbit_a}
	Let $T=(V,E)$ be a finite tree, and let $(\Sigma_{\bullet}(t))_{t\ge 0}$ and $(\Lambda_{\bullet}(t))_{t\ge 0}$ be as before. Let $B\subseteq T$ be a 2-branch rooted at $w\in V$. Suppose for some $t_{0}\ge 8$, $B$ has the local configuration in Figure \ref{fig:2branch_limiting_config} $a$ at time $t_{0}^{+}$. Then for any $\mathtt{a}\ge 17$, we have either of the following. 
	\vspace{0.1cm}
	\begin{description}
		\item[(i)] $w_{B}(t^{+})< 1/4$ for some $t\in [0,t_{0}+\mathtt{a}]$.
		\vspace{0.1cm}
		\item[(ii)] The local dynamics on $B+w\subseteq T$ during $(t_{0},t_{0}+\mathtt{a}-2]$ undergo the limit cycle given by the transitions in Figure \ref{fig:2branch_limit} (a). 
		\vspace{0.1cm}
		\item[(iii)] $B$ has the local configuration in Figure \ref{fig:2branch_limiting_config} $b$ at time $t_{1}^{+}$ for some $t_{1}\le t_{0}+\mathtt{a}-13$.
	\end{description}
	\vspace{0.1cm}
	Furthermore, if $t_{0}\ge 37$ and (ii) holds, then 
	\vspace{0.1cm}
	\begin{description}
		\item[(iv)] There exists a subtree $T'\subseteq T$ such that $w$ is the center of a branch in $T'$ and the dynamics on $T$ restrict on $T'$ during $(t_{0},t_{0}+\mathtt{a}-6]$.
	\end{description}
	
\end{lemma}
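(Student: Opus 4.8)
The plan is to run a forward time-marching case analysis of the local dynamics on $B+w$, tracking the relative phases of $v$ and of the two leaves $u_{1},u_{2}$ of $B$ together with the auxiliary variables $\sigma_{v},\mu_{v}$, and treating the root $w$ as an adversary which, by Proposition \ref{prop:blinking}, can pull $v$ at most once during any unit-length time interval. I would begin with three reductions. First, pass to the relative circular representation and use rotational symmetry to put $B$ in exactly the position of Figure \ref{fig:2branch_limiting_config} $a$ at time $t_{0}^{+}$. Second, since $t_{0}\ge 8$, Proposition \ref{prop:statebasic} (i) gives $\sigma_{u_{1}}\equiv\sigma_{u_{2}}\equiv 0$ on $[t_{0},\infty)$, so both leaves obey the $4$-coupling response, change relative phase only at a blink of $v$, and can only be moved counterclockwise, never clockwise. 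Third, dispose of $\sigma_{v}(t_{0}^{+})\ne 0$: then $v$ ignores all pulses until its next blink, which occurs by $t_{0}+1$, and on that blink it pulls the advanced leaf --- which sits at relative distance $\lambda\in(1/4,1/2)$ ahead of $v$ --- back to distance $\lambda-1/4<1/4$ without changing anything else, so $\omega_{B}((t_{0}+1)^{+})<1/4$ and (i) holds. Hence we may assume $\sigma_{v}(t_{0}^{+})=0$. Finally, whenever $E_{v}(s)$ would occur with $\sigma_{v}(s)=0$ at some $s$ in the window under consideration, Proposition \ref{prop:branch_excitation} gives $\omega_{B}(t^{+})<1/4$ for some $t\le s+8$, which is (i); so throughout we may also assume $v$ is never excited, hence $\sigma_{v}\equiv 0$ and $v$ follows the $4$-coupling there.

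Next I would analyze one ``period'' of the putative limit cycle. Starting from Figure \ref{fig:2branch_limiting_config} $a$ at a time $s_{0}^{+}$ with $\sigma_{v}(s_{0}^{+})=0$, march forward for the bounded length of one cycle, using at each step: the explicit $4$-coupling response of $v$ to a pull from $w$ (merge to $0$ if $\phi_{v}\in(0,1/4]$, shift back by $1/4$ if $\phi_{v}\in(1/4,1/2]$, nothing otherwise); the fact that $u_{1},u_{2}$ move only at blinks of $v$ and only counterclockwise; and the at-most-one-pull-per-second bound on $w$. The only freedom is whether, when, and at which phase of $v$ the single pull from $w$ lands; enumerating these finitely many possibilities together with the concurrent $\mu_{v}$-bookkeeping --- invoking Proposition \ref{prop:branch_excitation} the instant a fourth counted pull, or a counted pull while $\mu_{v}^{1}=1$, would trigger $E_{v}$ --- I would check that every branch of the analysis terminates in one of exactly three outcomes: $\omega_{B}$ dips below $1/4$ somewhere in the cycle; or $B$ returns to Figure \ref{fig:2branch_limiting_config} $a$ and the $(v,w)$-dynamics over the cycle were precisely the transitions of Figure \ref{fig:2branch_limit} (a); or $B$ reaches Figure \ref{fig:2branch_limiting_config} $b$.

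Since one cycle has bounded length and $\mathtt{a}\ge 17$, I would iterate this over $(t_{0},t_{0}+\mathtt{a}-2]$: if the first or third outcome ever occurs we finish after checking the relevant time falls inside $[0,t_{0}+\mathtt{a}]$, resp.\ that the config-$b$ time is $\le t_{0}+\mathtt{a}-13$; otherwise every cycle is the limit-cycle step, which is exactly (ii). For the ``furthermore'' part, assume $t_{0}\ge 37$ and that (ii) holds, so on $(t_{0},t_{0}+\mathtt{a}-2]$ the pair $(v,w)$ repeats the cycle of Figure \ref{fig:2branch_limit} (a). From that cycle one reads off that $v$ blinks once per second at a fixed relative phase and that the leaves $u_{1},u_{2}$ never pull $v$ during the window (each time a leaf blinks, $\phi_{v}$ lies outside $(0,1/2]$); hence $u_{1},u_{2}$ may be deleted without changing the trajectory on $V(T)\setminus\{u_{1},u_{2}\}$ over the window. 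Now feed the known regular input from $v$ into a width / branch-width lemma argument (Lemmas \ref{widthlemma} and \ref{lemma:branchwidth}) localized at $w$: the extra burn-in $t_{0}\ge 37$ lets Propositions \ref{prop:statebasic} and \ref{prop:branch_basic} act at $w$ and at its surrounding leaves and branch-centers, and the monotonicity of the inhibitory coupling then confines $w$'s relative phase to an arc on which all but at most one of its remaining neighbors become irrelevant, so the subtrees through those neighbors can be pruned. Pruning $u_{1},u_{2}$ and these subtrees leaves a subtree $T'\subseteq T$ in which $v$ is a leaf and $w$ is the center of a branch, and on which the dynamics of $T$ restrict during $(t_{0},t_{0}+\mathtt{a}-6]$.

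The main obstacle is the second paragraph: the per-cycle case analysis, carried out for the full range $\lambda\in(1/4,1/2)$ and for every admissible time and phase of the single $w$-pull, is where essentially all the content lies, and the delicate point throughout is to track $(\sigma_{v},\mu_{v})$ accurately enough that $E_{v}$ is detected exactly when $v$ is about to be excited, so that Proposition \ref{prop:branch_excitation} applies precisely there --- this is what rules out any non-synchronizing local orbit other than the single limit cycle of Figure \ref{fig:2branch_limit} (a). A secondary but unavoidable chore is the bookkeeping of constants, so that the offsets $t_{0}+\mathtt{a}$, $t_{0}+\mathtt{a}-13$, $t_{0}+\mathtt{a}-6$, $t_{0}+\mathtt{a}-2$ and the thresholds $\mathtt{a}\ge 17$ and $t_{0}\ge 37$ come out of the cycle lengths and the $+8$ burn-in in Proposition \ref{prop:branch_excitation}.
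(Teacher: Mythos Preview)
Your plan for parts (i)--(iii) is broadly on target and matches the paper's approach: assume (i) fails, reduce to $\sigma_{v}\equiv 0$ on the window, and then case-split on the position of $\Lambda_{w}(t_{0}^{+})$ to see that the local dynamics either shrink $\omega_{B}$ below $1/4$, return to Figure~\ref{fig:2branch_limiting_config}~$a$ (one cycle of Figure~\ref{fig:2branch_limit}~(a)), or land in Figure~\ref{fig:2branch_limiting_config}~$b$. Two minor comments: (1) the cleaner way to get $\sigma_{v}\equiv 0$ is to invoke Proposition~\ref{prop:2branch_rested} once (i) fails on $[0,16]$, rather than tracking $\mu_{v}$ and appealing to Proposition~\ref{prop:branch_excitation} each time an excitation threatens; (2) your constant bookkeeping will be simpler if you parametrize the case split by $\Lambda_{w}(t_{0}^{+})$ directly (the paper finds three ranges that correspond exactly to the three outcomes).

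The real gap is in your argument for (iv). First, two of your factual claims about the limit cycle are wrong. The leaves \emph{do} pull $v$ during each cycle: with $\Lambda_{v}(t_{0}^{+})=1/2$ and the lagging leaf at $\lambda_{2}\in(-1/4,0)$, after $w$ pulls $v$ down to $\Lambda_{w}\in(1/4,\lambda_{2}+1/2]$, the blink of $u_{2}$ finds $\phi_{v}\in(1/4,1/2]$ and pulls $v$ by $1/4$. Also, $v$ does not blink ``once per second at a fixed relative phase'': the cycle has length $3/2-\lambda_{2}\in(3/2,7/4)$ and the whole picture rotates each cycle. So deleting $u_{1},u_{2}$ is not justified on the grounds you give. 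The correct observation for (iv) is a different one: by back-tracking the cycle one sees that at the unique blink of $v$ (at $a_{6}$), $w$ is right to $v$, so $w$ is \emph{never pulled by} $v$; hence the dynamics restrict on $T-B$, not merely $T-\{u_{1},u_{2}\}$. Second, for the remaining branches $B'\ne B$ rooted at $w$, the argument is not a width-lemma localization at $w$. One applies Lemmas~\ref{lemma:branch_attraction}, \ref{lemma:branchorbit_a}(i)--(iii), and \ref{lemma:branchorbit_b} to $B'$ itself (this is where $t_{0}\ge 37\ge E_{\Delta-1}$ is used), rules out cycle (b) on $B'+w$ by incompatibility with the blink spacing of $w$ forced by cycle (a) on $B+w$, and then shows $B'+w$ must also run cycle (a), whence the same ``$w$ not pulled by center'' argument prunes $B'$. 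A final back-and-forward tracking step excludes $B'$ exiting cycle (a) prematurely. Your ``monotonicity of the inhibitory coupling confines $w$'s relative phase'' is too vague to substitute for this recursive classification.
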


\begin{figure*}[h]
	\centering
	\includegraphics[width = 0.95 \linewidth]{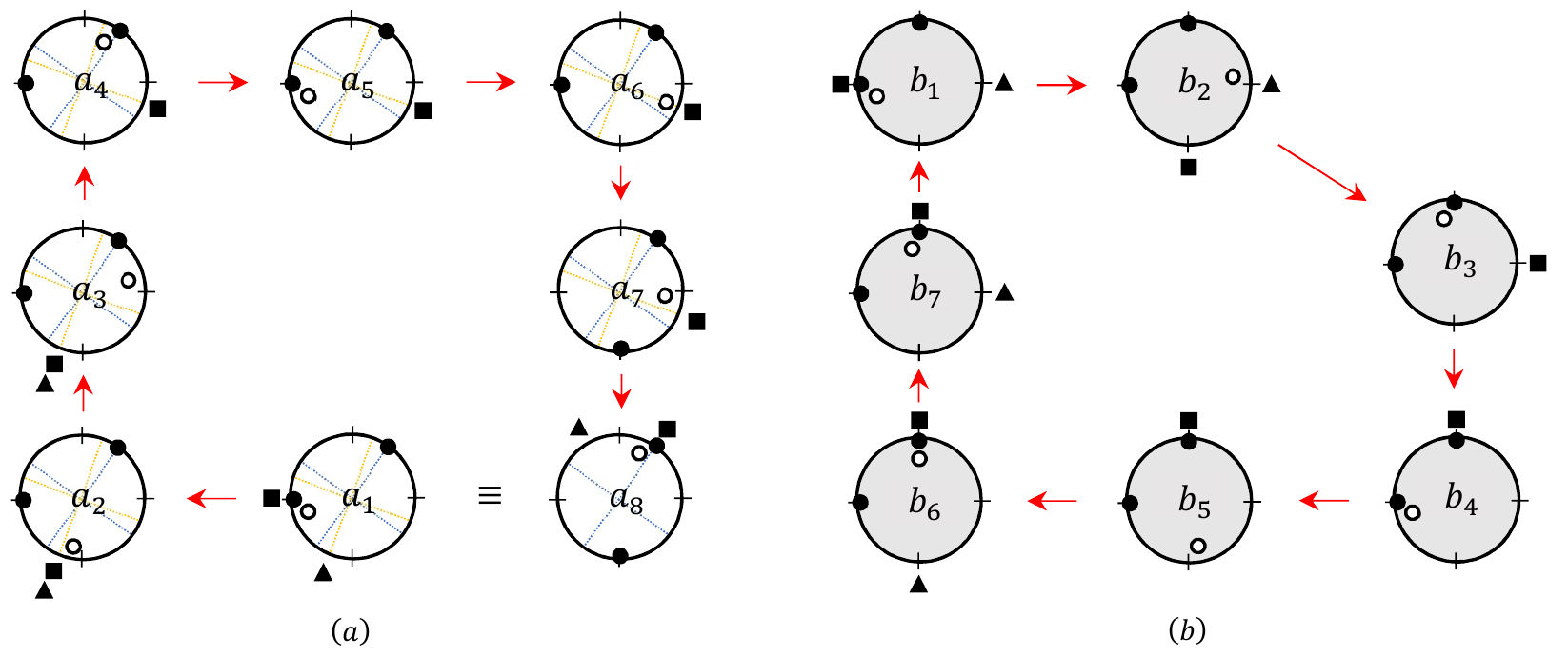}
	\vspace{-0.2cm}
	\caption{ (In relative circular representation) A conditional local limit cycle on $B+w$, where each transition arrow takes at most 1/2 second.	$\blacksquare$, $\bullet$, and $\circ$ denote the relative phases of center, leaves, and the activator, respectively. Relative phases of $w$ are indicated by $\blacktriangle$ except $c_{2},c_{3},c_{4}$, and $c_{5}$, where they are undetermined (hence not shown).
	}
	\label{fig:2branch_limit}
\end{figure*}

For the local configuration \ref{fig:2branch_limiting_config} $b$, we have the following lemma.

\begin{lemma}\label{lemma:branchorbit_b}
	Let $T=(V,E)$ be a finite tree, and let $(\Sigma_{\bullet}(t))_{t\ge 0}$ and $(\Lambda_{\bullet}(t))_{t\ge 0}$ be as before. Let $B\subseteq T$ be a 2-branch rooted at $w\in V$. Suppose for some $t_{0}\ge 8$, $B$ has the local configuration in Figure \ref{fig:2branch_limiting_config} $b$ at time $t_{0}^{+}$. Then for any $\mathtt{b}\ge 26$, either of the following holds:
	\vspace{0.1cm}
	\begin{description}
		\item[(i)] $\omega_{B}(t^{+})< 1/4$ for some $t\in [0,t_{0}+\mathtt{b}]$.
		\vspace{0.1cm}
		\item[(ii)]  $B+w$ undergoes the transitions in Figure \ref{fig:2branch_limit} $(b)$ during $(t_{0}+2, t_{0}+\mathtt{b}-8]$.
	\end{description}
\end{lemma}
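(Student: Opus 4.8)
The plan is to suppose that alternative (i) fails, i.e. $\omega_B(t^+)\ge 1/4$ for every $t\le t_0+\mathtt b$ at which $B_v(t)$ occurs, and to chase the local dynamics on $B+w$ forward from the configuration of Figure~\ref{fig:2branch_limiting_config}~$b$ at time $t_0^+$, showing that the obligation to keep the branch width from dropping below $1/4$ forces the local configuration on $B+w$, the blink times of $w$ relative to the activator, and the internal state of $v$ to follow exactly the itinerary in Figure~\ref{fig:2branch_limit}~$(b)$. I work in the relative circular representation, normalizing $\alpha(t_0)=0$, and track $\Lambda_v,\Lambda_{u_1},\Lambda_{u_2},\Lambda_w$ together with $(\mu_v,\sigma_v)$; by Proposition~\ref{prop:statebasic}~(i) the two leaves are rested for all $t\ge 8$, so only the center carries a nontrivial internal state. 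The inductive invariant to be maintained is: whenever the local configuration reaches the canonical state $b_1$ of Figure~\ref{fig:2branch_limit}~$(b)$ at a time $\tau^+$ and (i) has not yet been triggered, the transitions of that figure occur during the next period and the canonical state recurs (up to rotation) at the end of it.

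The starting observation is that configuration $b$ is \emph{not} self-sustaining without help from the root: if $v$ evolves as the rested $4$-coupling and $w$ does not pull $v$ during $(t_0,t_0+1]$, then a short direct computation in the relative representation — the leaf at relative phase $1/4$ blinks at $t_0+3/4$ without touching $v$ or the other leaf, and the blink of $v$ at $t_0+1$ pulls the remaining leaf onto $v$ — produces three co-located phases and $\omega_B(\cdot^+)=0<1/4$, contradicting our assumption. Hence $w$ must pull $v$ at least once in each near-period of length $\approx 1$. A finite case analysis over the phase $\phi_v$ at the instant of such a pull — using only $\deg(v)=3$, that $w$ blinks at most once per second (Proposition~\ref{prop:blinking}), Proposition~\ref{prop:branch_basic}, and the $4$-coupling/refractory response curves — shows that only the pull locations appearing in Figure~\ref{fig:2branch_limit}~$(b)$ keep two distinct leaf phases alive, that each such possibility pins $\Lambda_w$ modulo the activator into the window shown in the figure, and that it re-establishes a rotated copy of one of the states $b_1,b_2,\dots$. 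Iterating this step is what produces the periodic itinerary.

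The delicate part is the bookkeeping of $\sigma_v$ and $\mu_v$, and this is where the auxiliary dynamics are used in full. Since $v$ has degree $3$, Proposition~\ref{prop:statebasic}~(ii) says that as soon as $(\mu^1_v,\mu^3_v,\sigma_v)$ equals $(3,0,0)$ the center stays rested forever, whence the merging argument of the previous paragraph would force $\omega_B<1/4$; so in the surviving scenario $v$ must get excited once per cycle, and the whole content is to show the excitation can only occur at the step displayed in Figure~\ref{fig:2branch_limit}~$(b)$. This is exactly where Proposition~\ref{prop:E_v(t)_needs_enough_pull}~(i) enters: between a blink of $v$ at which it becomes rested and the next excitation, provided $v$ does not blink in between, it must absorb total phase inhibition $f_v(\cdot)<-1/4$; since within any one-second window $v$ receives at most one pull from each leaf and one from $w$, an excitation cannot be triggered until enough pulls have accumulated, which rules out every itinerary except the one shown. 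The $+2$ offset in the interval $(t_0+2,t_0+\mathtt b-8]$ of alternative (ii) absorbs the at-most-two-second transient during which $\mu_v$ relaxes from the arbitrary memory value it may carry at $t_0$, after which the cycle is exactly periodic; the $-8$ slack at the right end leaves room for the last period to complete before $t_0+\mathtt b$, and $\mathtt b\ge 26$ guarantees the interval contains at least one full period, so that (ii) is a meaningful alternative consistent with (i) failing.

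I expect the main obstacle to be the simultaneous management of the continuous phase chase for the three oscillators $v,u_1,u_2$ plus the driver $w$ and the discrete state/memory of $v$: the genuine danger is an ``off-schedule'' excitation of $v$ that throws the leaves out of the regime in which $w$ can keep the branch width $\ge 1/4$, and excluding this is precisely the role of Proposition~\ref{prop:E_v(t)_needs_enough_pull}, as flagged in the remarks following Theorem~\ref{A4Ctreethm}. A secondary nuisance is the number of rotated and reflected sub-cases in the initial step of synchronizing the phases on $B+w$ into the canonical state $b_1$ before the periodic regime begins; these are routine but must be carried out in order to justify the $t_0+2$ burn-in.
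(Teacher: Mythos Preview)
Your overall plan---assume (i) fails and chase the local dynamics on $B+w$ forward---is the right one, and your first paragraph correctly identifies that $w$ must pull $v$ each period to keep the branch width from collapsing. But the third paragraph contains a genuine misconception that derails the argument.

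You claim that ``in the surviving scenario $v$ must get excited once per cycle,'' and that the delicate bookkeeping is of $\sigma_v,\mu_v$. This is backwards. The center $v$ has degree $3$, and by Proposition~\ref{prop:2branch_rested} (which you do not invoke), if (i) fails then $\sigma_x(t)\equiv 0$ for every $x\in V(B)$ and all $t\ge 7$; in particular $v$ is rested for all $t\ge t_0$. Your inference ``if $v$ stays rested then the merging argument forces $\omega_B<1/4$'' is false: the merging argument needs $w$ not to pull $v$, which is independent of $\sigma_v$. The rested center can perfectly well be pulled by $w$ each period and keep two distinct leaf phases alive; that is exactly what happens along the limit cycle in Figure~\ref{fig:2branch_limit}~(b).

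The node whose internal state must be tracked is the \emph{root} $w$, not $v$. The paper's proof pins down $\Lambda_w$ at the critical step of each period (it must lie in $[0,1/4]$, and in fact must equal $1/4$ after the burn-in), and uses Proposition~\ref{prop:E_v(t)_needs_enough_pull} applied to $w$---not to $v$---to rule out the transitions in which $\sigma_w\in\{1,2\}$ at the wrong moment. The logic is: for $w$ to be in the right window at the right time with state $2$, it would have to have been excited recently, but then part (ii) of that proposition forces $\Lambda_w$ out of the allowed window. This is the mechanism that collapses all surviving possibilities onto the single cycle $b_1\to b_8\equiv b_1$. Your proposal never engages with $\sigma_w$ at all, so as written it cannot close.
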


In the rest of this section, we give a proof of Lemma \ref{key} assuming Lemmas \ref{lemma:branch_attraction}, \ref{lemma:branchorbit_a}, and \ref{lemma:branchorbit_b}.

\begin{proof}[Proof of Lemma \ref{key}] 
	
	Let $T=(V,E)$ be a finite tree with maximum degree $\Delta$. Fix a terminal branch $B$ with root $w$ and center $v$. Let $\mathtt{a}\ge 17$ and $D_{\Delta}\ge E_{\Delta-1}$ be some constants to be determined later. Suppose $\omega_{B}(t)\ge 1/4$ for all $[0,D_{\Delta}]$. Then by Lemma \ref{lemma:branch_attraction}, there exists $t_{0}\in [3,E_{\Delta-1}]$ such that $B$ has one of the two local configurations in Figure \ref{fig:2branch_limit_a_pf} at time $t_{0}^{+}$. Then by Lemma \ref{lemma:branchorbit_a} and \ref{lemma:branchorbit_b}, either of the following must hold:
	
	\begin{description}
		\vspace{0.1cm}
		\item{(i)} $B+w$ undergoes the local limit cycle in Figure \ref{fig:2branch_limit} (a) during $(t_{0}, t_{0}+\mathtt{a}-2]$.
		\vspace{0.1cm}
		\item{(ii)} $B+w$ undergoes the local limit cycle in Figure \ref{fig:2branch_limit} (b) during $(E_{\Delta-1}+\mathtt{a}-13,D_{\Delta}-8]$,
	\end{description}
	\vspace{0.1cm}
	We will show that both statements are impossible if $\mathtt{a}\ge 2E_{\Delta-1}+24$ and $D_{\Delta}\ge \mathtt{a}+22+E_{\Delta-1}$. This will show that the assertion holds for $D_{\Delta}=3E_{\Delta-1}+46$.  
	
	Suppose (i) holds. Then by Lemma \ref{lemma:branchorbit_a} (iv), there exists a subtree $T'\subseteq T$ such that $w$ is the center of some branch $B_{w}\subseteq T'$ and the dynamics on $T$ restrict on $T'$ during $(t_{0},t_{0}+\mathtt{a}-4]$. Let $w'\in V(T')$ be the root of $B_{w}$. By applying Lemma \ref{lemma:branch_attraction} to the branch $B_{w}$ restarting the dynamics at $t=t_{0}$, $B_{w}$ must have one of the local configurations in Figure \ref{fig:2branch_limiting_config} at time $t_{1}^{+}$ for some $t_{1}\in [t_{0},t_{0}+E_{\Delta-1}]$. Hence at least one of the conclusions of Lemma \ref{lemma:branchorbit_a} or \ref{lemma:branchorbit_b} must hold. We will show that none of them holds. 
	
	First we show that $\omega_{B_{w}}(t)\ge 1/4$ for all $t\in [t_{0}, t_{0}+\mathtt{a}-13]$. Suppose for contrary that $\omega_{B_{w}}(t)<1/4$ for some $t\in [t_{0},t_{0}+\mathtt{a}-13 ]$. Then by Lemma \ref{lemma:branchwidth}, the leaves in $B_{w}$ do not affect the dynamics on $w$ during $[t_{0}+\mathtt{a}-10, t_{0}+\mathtt{a}-4 ]$. Recall the definition of $f_{v}(I)$ in \eqref{eq:total_phase_inhibition} in Section \ref{Section:The width lemma and the branch width lemma}.  
	Hence $f_{w}((s,s+1])\ge -1/4$ for any $s\in [t_{0}+\mathtt{a}-10, t_{0}+\mathtt{a}-5 ]$. But note that the transition $a_{1}\rightarrow a_{8}\equiv a_{1}\rightarrow a_{8}$ in Figure \ref{fig:2branch_limit} occurs on $B+w$ at least once during $[t_{0}+\mathtt{a}-10, t_{0}+\mathtt{a}-4 ]$, which requires $f_{w}((s_{1},s_{2}])<-1/2$, where $(s_{1},s_{2}]$ is the interval during which the first cycle $a_{1}\rightarrow a_{8}$ occurs. Since $|s_{2}-s_{1}|<2$, this yields a contradiction. Thus we may assume $\omega_{B_{w}}(t)\ge 1/4$ for all $t\in [t_{0}, t_{0}+\mathtt{a}-13]$.
	
	\begin{figure*}[h]
		\centering
		\includegraphics[width = 0.7 \linewidth]{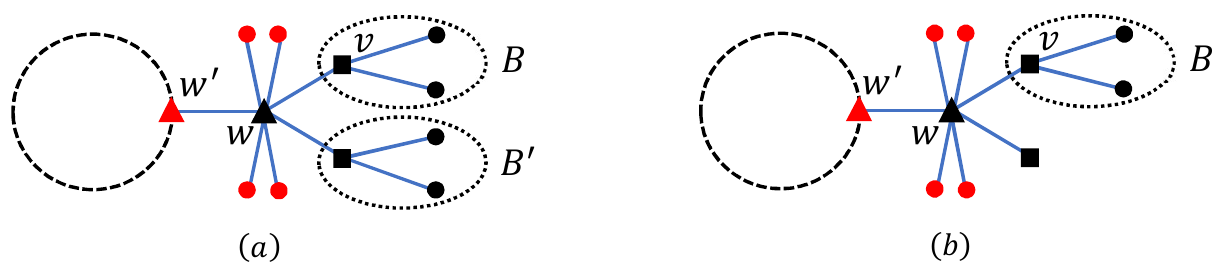}
		\vspace{-0.2cm}
		\caption{ (a) Terminal branch $B\subseteq T$ is rooted at $w$, which might have another terminal branch $B'$ also rooted at itself and other neighbors of degree 1 shown in red dots. At most one neighbor of $w$, say $w'$, may be neither of degree 1 or center of another terminal branch. (b) Assuming $\omega_{B}(t)\ge 1/4$ for long times, we can omit all leaves of $B'$ without affecting the global dynamics.
		}
		\label{fig:tree_substructure}
	\end{figure*}    
	
	Then restarting the dynamics on $T'$ at time $t=E_{\Delta-1}$, Lemmas \ref{lemma:branchorbit_a} and \ref{lemma:branchorbit_b} yield that either of the two transitions $a_{1}\rightarrow a_{8}\equiv a_{1}\rightarrow a_{8}$ or $b_{1}\rightarrow b_{1}\rightarrow b_{1}$ in Figure \ref{fig:2branch_limit} must occur on $B_{w'}+w'$ during $(t_{1}, t_{0}+\mathtt{a}-21]$. But none of them are compatible with the local limit cycle in Figure \ref{fig:2branch_limit} on $B+w$ during this period. Indeed, in the latter case, the center $w$ of the branch $B_{w}\subseteq T'$ must blink twice in exactly 2 seconds at the same phase during some interval $I\subset (t_{1}, t_{0}+\mathtt{a}-21]$ of length 4. But from the local limit cycle on $B+w$, we deduce that $w$, as the root of $B$, should blink once in every $<2$ seconds in different phases. In the former case, as the center of $B_{w}$, $w$ is pulled twice between each consecutive blinks (by its root and one of its leaves); but as the root of $B$, the total phase inhibition during each consecutive blink is $<-1/2$, which requires at least three pulls. Therefore given that $t_{0}+\mathtt{a}-21\ge t_{1}+6$, which is satisfied if $\mathtt{a}\ge 2E_{\Delta-1}+24$, we rule out (i). In fact, notice that this holds for any branch rooted at $w$, which is also a terminal branch. Hence, for any such $\mathtt{a}$, we assume that (ii) holds for all branches in $T$ rooted at $w$.

	Next, suppose (ii) holds. We claim that there exists a subtree $T''\subseteq T$ such that $B$ is the unique terminal branch in $T''$ rooted at $w$ and the joint trajectory restricts on $T''$ during $(t_{0}+\mathtt{a}+2,D_{\Delta}-8]$ (see Figure \ref{fig:tree_substructure} (b)). Suppose this is true Then provided  $D_{\Delta}\ge \mathtt{a}+22+E_{\Delta-1}$, the transition $b_{1}\rightarrow b_{1}$ in Figure \ref{fig:2branch_limit} occurs at least five times during $(t_{0}+\mathtt{a}+2,D_{\Delta}-8]$. Note that $w$ blinks exactly once in every 2 seconds at relative phase 0 (right before Figure \ref{fig:2branch_limit} $b_{2}$) during the five cycles. Hence in $b_{1}$ of the third cycle, there is no leaf neighbor of $w$ with relative phase $\in (0,1/2]$. Also, notice that $w$ must keep state 0 during the first four cycles.  Since $w$ keeps the same relative phase during $b_{7}\rightarrow b_{1}$ in the second cycle, it follows that no leaf neighbor of $w$ has relative phase $\in [1/2,3/4)$ at $b_{1}$ of the third cycle. But then $w$ can only be pulled by its external neighbor $w'\notin V(B)$ during $b_{3}\rightarrow b_{6}$ in the third cycle, yet $f_{w}(I)<-1/2$ during the transition, a contradiction.

	It remains to show the claim. If $B$ is the unique branch in $T$ rooted at $w$, then we are done. Suppose $T$ has another branch $B'\ne B$ rooted at $w$. Let $L'$ be the set of leaves in $B'$. It is enough to show that the dynamics on $T$ restrict on either $T-L'$ or $T-B'$ during $(t_{0}+\mathtt{a}+2,D_{\Delta}-8]$. According to Lemma \ref{lemma:branchwidth}, we may assume that $\omega_{B'}(t)\ge 1/4$ for all $t \in [0,t_{0}+\mathtt{a}-1]$. Then by a similar argument for $B$, either of the transition $a_{1}\rightarrow a_{8}\equiv a_{1}\rightarrow a_{8}$ or $b_{1}\rightarrow b_{1}\rightarrow b_{1}$ in Figure \ref{fig:2branch_limit} must occur on $B'+w$ during $[E_{\Delta-1},t_{0}+\mathtt{a}-9]$. However, since the local orbits on $B+w$ and $B'+w$ assumes identical local dynamics on the common root $w$, $B'+w$ must undergo the local limit cycle in Figure \ref{fig:2branch_limit} (b) during $(t_{0}+\mathtt{a}-13,t_{0}+\mathtt{a}-9]$. Moreover, such a local limit cycle is uniquely determined given the blinking times of $w$. So both $B$ and $B'$ must have identical local dynamics during this period. But then since they get affected by their common root $w$, their local dynamics are identical for all times $t\ge t_{0}+\mathtt{a}-13$. In this case the joint trajectory restricts on $T-B'$ after time $t_{0}+\mathtt{a}-13$, as desired. This shows the claim, and hence the assertion. 
\end{proof}

\vspace{0.3cm}
\section{Proof of Lemmas \ref{lemma:branch_attraction}, \ref{lemma:branchorbit_a}, and \ref{lemma:branchorbit_b}}
\label{section:locallemmas}

In this section, we prove Lemmas \ref{lemma:branch_attraction}, \ref{lemma:branchorbit_a}, and \ref{lemma:branchorbit_b}. This will complete the proof of Lemma \ref{key} and hence that of our main results, Theorems \ref{4treethm} and \ref{A4Ctreethm}.

\vspace{0.2cm}
\subsection{Attraction to the local limit cycles.}
\label{subsection:attraction}

In this subsection we prove Lemma \ref{lemma:branch_attraction}. The  following proposition combined with Proposition \ref{prop:branch_excitation} shows that it is enough to consider 2-branches only. 

\newpage
\begin{proposition}\label{prop:3branch_gen_to_right_half}
	Let $G=(V,E)$, $(\Sigma_{\bullet}(t))_{t\ge 0}$, and $(\Lambda_{\bullet}(t))_{t\ge 0}$ be as before. Let $B\subseteq G$ be a branch with center $v$. Suppose $v$ blinks at some time $t_{0}\ge 10$. Then there exists $t_{1}\in [8, t_{0}+6]$ such that one of the followings hold:
	\vspace{0.2cm}
	\begin{description}
		\item[(i)] $\omega_{B}(t_{1}^{+})<1/4$;
		\vspace{0.1cm}
		\item[(ii)] All leaves of $B$ have at most two distinct states at time $t_{1}^{+}$;
		\vspace{0.1cm}
		\item[(iii)] $\sigma_{v}(t_{1})=0$ and $E_{v}(t_{1})$ occurs.
	\end{description} 
\end{proposition}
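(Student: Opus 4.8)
The plan is to separate off the situations in which the center $v$ of $B$ becomes excited, and then reduce what remains to a bounded case analysis of the local dynamics on $B$ right after a blink of $v$. First I record the standing facts. By Proposition~\ref{prop:statebasic}(i) every leaf of $B$ has state $0$ for all $t\ge 3$, so from time $3$ onward each leaf evolves purely under the $4$-coupling driven by $v$: its relative phase is constant between blinks of $v$, and at a blink of $v$ a leaf lying within counterclockwise distance $1/4$ ahead of $v$ merges onto $\Lambda_v$ while a leaf at distance in $(1/4,1/2]$ ahead of $v$ shifts by $-1/4$. If $B$ has at most two leaves then (ii) holds at every time, so I may assume $k\ge 3$.

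Next, the excitation cases. If $\sigma_v(s)=0$ and $E_v(s)$ occurs for some $s\in[8,t_0+6]$, take $t_1=s$ and (iii) holds. If $\sigma_v(s)=0$ and $E_v(s)$ occurs for some $s\in[3,8)$, then Proposition~\ref{prop:branch_excitation} gives $\omega_B(s'^+)<1/4$ for some $s'$ with $3\le s'\le s+8<16$; if $s'\ge 8$ take $t_1=s'$, and if $s'<8$ apply Lemma~\ref{lemma:branchwidth}(ii) and pass to the first blink of $v$ at or after time $8$, which occurs by time $16$ and at which the branch width is still $<1/4$. In both cases $t_1\in[8,16]$, which lies in $[8,t_0+6]$ because $t_0\ge 10$, and (i) holds.

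It remains to treat the case in which $\sigma_v(s)=0$ and $E_v(s)$ never occur together for $s\in[3,t_0+6]$. Then $v$ starts no new refractory period during $[3,t_0+6]$, and any refractory period in progress at time $3$ lasts at most two blinks --- during a refractory period pulls leave $\phi_v$ unchanged, so $v$ blinks exactly once per second --- hence ends by time $\le 5$. Thus $\sigma_v\equiv 0$ on $[5,t_0+6]\supseteq[6,t_0]$, and Proposition~\ref{prop:branch_basic}(ii) applies to the given blink: after normalizing $\Lambda_v(t_0^+)=1/2$, every leaf of $B$ lies in $[1/4,3/4]$ at time $t_0^+$. I then follow the leaves through the blinks of $v$ during $(t_0,t_0+6]$, separating the leaves in the ``behind'' arc $[1/4,1/2)$ from those in the ``ahead'' arc $[1/2,3/4]$ and keeping account of the pulls of $v$ by its root $w$ (at most one per second, inhibitory). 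The behind-leaves drag $v$ backward by successive merges as they blink --- these pulls fall in the first quarter-second after a blink of $v$, so they do not accumulate in $\mu_v^3$, consistent with the present case --- after which a later blink of $v$ merges the former behind-leaves onto a single phase; meanwhile the ahead-leaves are pulled toward $\Lambda_v$ and, once within distance $1/4$, merge onto it at the next blink. A pull of $v$ by $w$ only moves $v$ backward by at most $1/4$ while leaving the leaves fixed, so it can delay but not undo this collapse; checking the cases one finds that within six seconds of $t_0$ either at most two distinct leaf phases remain, giving (ii), or $\omega_B<1/4$ at an intervening blink of $v$, giving (i), in either case with $t_1\le t_0+6$.

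The main obstacle is this last step: the finite case analysis on the leaf distribution over $(t_0,t_0+6]$, and specifically checking that the inhibitory perturbations of $v$ by $w$ --- up to roughly five between consecutive blinks of $v$, since $v$ may blink only once per five seconds while being pulled --- cannot keep the leaves from collapsing to at most two distinct phases within the six allotted seconds, while simultaneously never driving $\mu_v^3$ up to $\mu_v^1$ (which would trigger an excitation and return us to the previous case). A secondary chore is the timing bookkeeping that fits the estimates of the excitation cases inside $[8,t_0+6]$ using only $t_0\ge 10$.
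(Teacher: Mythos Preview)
Your decomposition is logically sound and not far in spirit from the paper's argument: peel off the excitation cases, then analyse what remains. The bookkeeping in the excitation case on $[3,8)$ is a little loose --- Proposition~\ref{prop:branch_excitation} gives no explicit lower bound on the time at which $\omega_B<1/4$, and Lemma~\ref{lemma:branchwidth} needs the base time to be at least $6$ --- but these are repairable.

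The genuine gap is exactly where you flag it, and the heuristic you offer does not close it. Your picture --- the behind-leaves merge onto $v$, then a later blink of $v$ collapses them, with $w$'s pulls merely delaying --- is too coarse. In most sub-configurations with $k\ge 3$ distinct leaf phases, the pulls from the behind-leaves \emph{together with} a well-timed pull from $w$ are precisely what drive $\mu_v^3$ up to $\mu_v^1=3$ and force $E_v$; under your non-excitation hypothesis those sub-cases become vacuous, but you still have to identify them, and that means tracking $\mu_v$ through the same case tree. The paper does this head-on: it forward-tracks from $t_0^+$ with $(\mu_v,\sigma_v)$ recorded beneath each local picture, and in the majority of branches the conclusion is (iii) (for instance whenever $\lambda_3\in[1/2,3/4]$, or when $\lambda_2\in[1/2,3/4]$ after one pass, $E_v$ fires within two cycles); only a handful of branches terminate in (i) or (ii), and those are exactly the ones that survive your non-excitation assumption. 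There is no soft argument that bypasses this: the bound $t_1\le t_0+6$ hinges on the timing of the $\mu_v^3$ increments relative to $\beta_v=1/4$ and $\beta_v=1$, and your final step does not touch $\mu_v$ at all. In short, your plan reduces to the paper's case analysis rather than shortcutting it.
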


\begin{proof}
	Let $w$ be the root of $B$ and $L=\{u_{1},\cdots,u_{k}\}$ be the set of leaves in $B$. We may assume $k\ge 3$ since otherwise (ii) holds for $t_{1}=t_{0}$. By Proposition \ref{prop:statebasic} (i), all $u_{i}$'s have state $0$ for all times after $t=3$. If $\sigma_{v}(t_{0})\in \{1,2\}$, then (iii) holds for some $t_{1}\in [8,t_{0}]$. Hence we may assume $\sigma_{v}(t_{0})=0$ and so $\mu_{v}(t_{0}^{+})=(3,0,0)$. For any $x\in V$ and $j\ge 1$, denote by $\tau_{x;j}$ the time of the $j^{\text{th}}$ blink of $x$ after time $t_{0}$. For $x=u_{i}$, we write $\tau_{u_{i};j}=\tau_{i;j}$. We may assume without loss of generality that $\Lambda_{v}(t_{0})=1/2$. Denote $\lambda_{i}=\Lambda_{u_{i}}(t_{0}^{+})$.  
	Proposition \ref{prop:branch_basic} (ii) then yields $\lambda_{i}\in [1/4,3/4]$ for all $1\le i \le k$. We may assume that $\lambda_{i}$'s are distinct. We also assume $\lambda_{1}\in [1/2,3/4]$ since otherwise (i) holds for $t_{1}=\tau_{1;1}< t_{0}+1/4$.

	\begin{figure*}[h]
		\centering
		\includegraphics[width = 0.9 \linewidth]{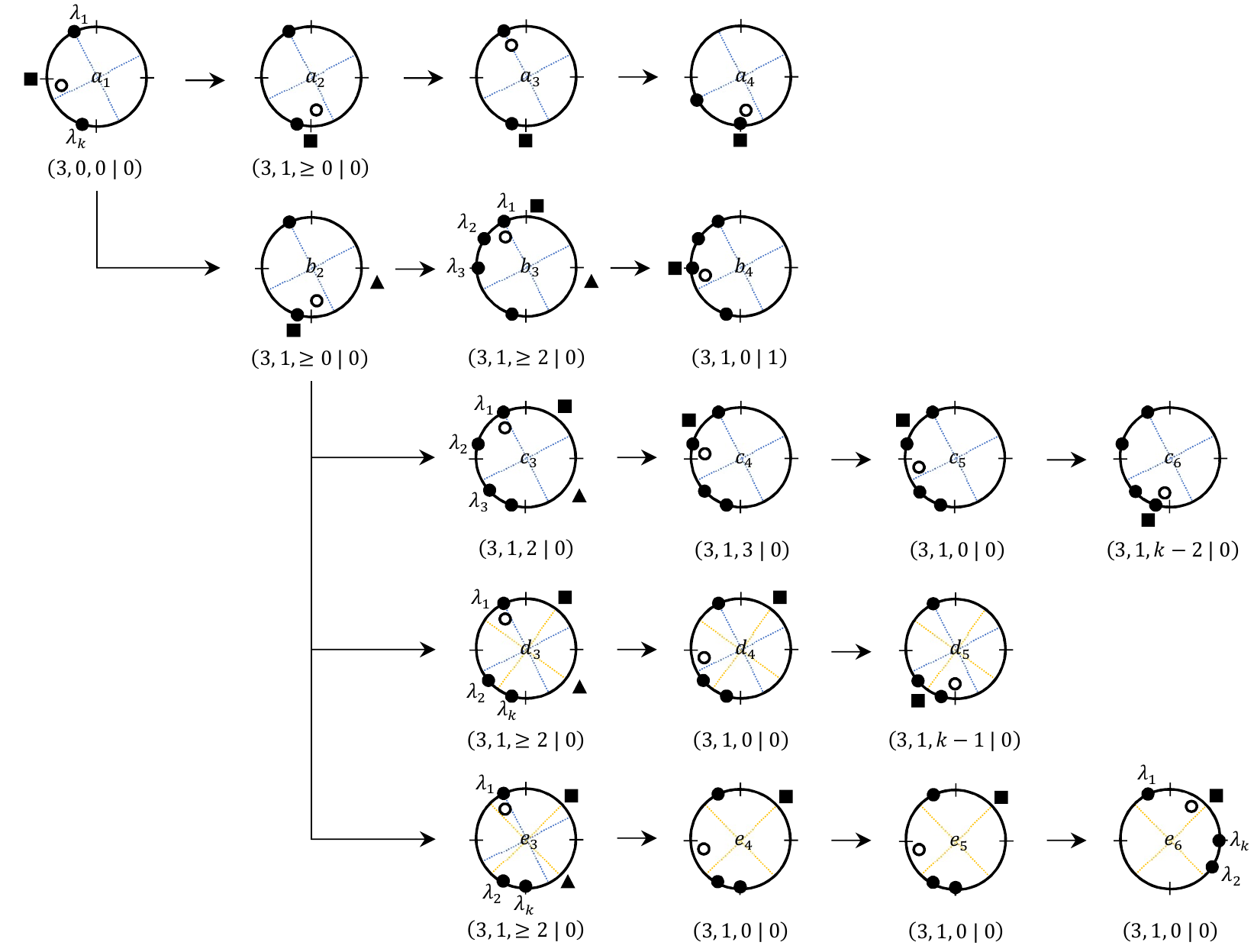}
		\caption{ (In relative circular representation) Forward tracking local dynamics on a $k$-branch $B$ ($k\ge 3$) starting from time $t_{0}^{+}$ when $\lambda_{1}\in [1/2,3/4)$ and $\lambda_{k} \in [1/4,1/2]$. Below each local configuration,  $(a,b,c\,|\, d)$ abbreviates $\mu_{v}=(a,b,c)$ and $\sigma_{v}=d$. 
		}
		\label{fig:3branch_gen_1}
	\end{figure*}
	
	We first show the assertion when $\lambda_{1}\in [1/2,3/4)$. If $P_{v}(\tau_{1;1})$ does not occur, then $P_{v}(t)$ does not occur during $[\tau_{1;1},\tau_{v;1}]$ so $\Lambda_{v}(\tau_{v;1})=\Lambda_{v}(\tau_{1;1})\in (\lambda_{1}-1/2,1/2]$. So $\omega_{B}(\tau_{v;1}^{+})< (\lambda_{1}-1/4)-(\lambda_{1}-1/2)=1/4$, so (i) holds for some $t_{1}=\tau_{v;1}\le t_{0}+3/2$, as desired. (See, e.g., transition $a_{1}\rightarrow a_{4}$ in Figure \ref{fig:3branch_gen_1}). Hence we may assume $P_{v}(\tau_{1;1})$ occurs, which requires $\tau_{w;1}\in (\tau_{k;1},\tau_{1;1})$ and $P_{v}(\tau_{w;1})$ to occur. Furthermore, if $\lambda_{3}\in [1/2,3/4]$, then $E_{v}(\tau_{2;1})$ or $E_{v}(\tau_{3;1})$ occurs (see Figure \ref{fig:3branch_gen_1} $b_{2}\rightarrow b_{4}$). Thus we assume $\lambda_{3}\in [1/4,1/2)$. 
	
	\begin{figure*}[h]
		\centering
		\includegraphics[width = 0.9 \linewidth]{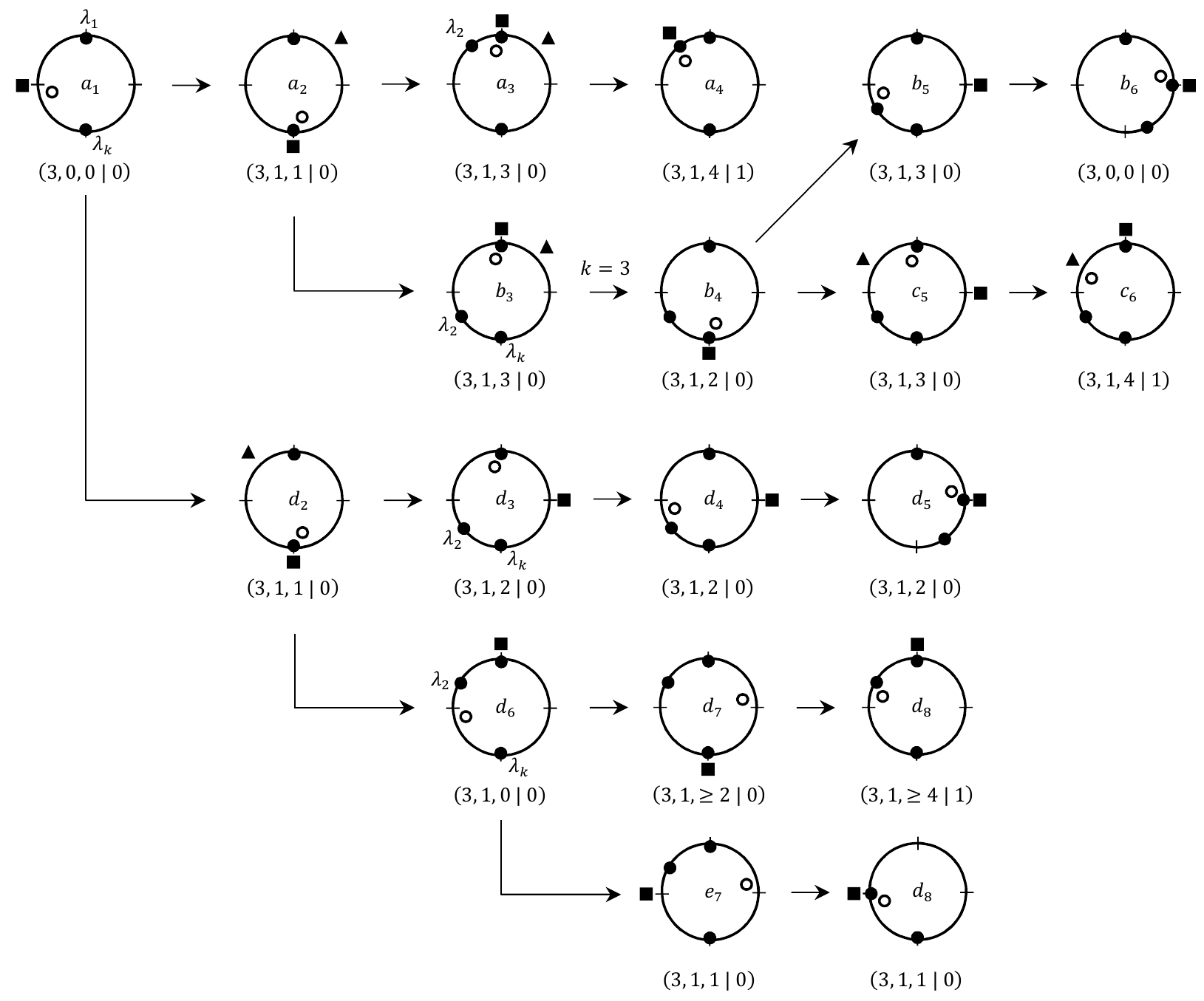}
		\caption{ (In relative circular representation) Forward tracking local dynamics on a $k$-branch $B$ ($k\ge 3$) starting from time $t_{0}^{+}$ when $\lambda_{1}=3/4 $ and $\lambda_{k}=1/4$.  Below each local configuration, $(a,b,c\,|\, d)$ abbreviates $\mu_{v}=(a,b,c)$ and $\sigma_{v}=d$. 
		}
		\label{fig:3branch_gen_2}
	\end{figure*}

	If $\lambda_{2}\in [1/2,3/4]$ (as in Figure \ref{fig:3branch_gen_1} $c_{3}$), then $P_{v}(\tau_{j;2})$ occurs for all $3\le j \le k$ so $\Lambda_{v}(\tau_{k;2}^{+})\in [1/4,1/2]$ and $\mu_{v}(\tau_{k;2}^{+})=(3,1,k-2)$ (as in Figure \ref{fig:3branch_gen_1} $c_{6}$). This is similar to the previous case in Figure \ref{fig:3branch_gen_1} $a_{2}$ so $P_{v}(\tau_{1;2})$ (and hence $P_{v}(\tau_{w;2})$) must occur. But since we know $\mu_{v}^{3}(\tau_{k;2}^{+})=k-2 \ge 1$, $E_{v}(\tau_{2;2})$ occur, as desired. Hence we assume $\lambda_{2}\in [1/4,1/2)$ as in Figure \ref{fig:3branch_gen_1} $d_{3}$ or $e_{3}$. If $P_{v}(\tau_{2;2})$ occurs then so does $P_{v}(\tau_{j;2})$ for all $3\le j \le k$; hence we have $\Lambda_{v}(\tau_{2;2}^{+})\in [1/4,1/2]$ and $\mu_{v}(\tau_{v;2}^{+})=(3,1,k-1)$. Hence a similar argument for the case in Figure \ref{fig:3branch_gen_1} $c_{6}$ applies. Otherwise $P_{v}(\tau_{j;2})$ does not occur for all $2\le j \le k$; but then $\tau_{v;1}\le t_{0}+7/4$ and at time $\tau_{v;1}^{+}$, we are back to the original configuration at time $t_{0}^{+}$. After recentering $\Lambda_{v}(\tau_{1;1}^{+})=1/2$, we have $\lambda_{j}\in [1/2,3/4)$ for all $2\le j \le k$ and $\lambda_{1}\in [1/4,1/2]$. Hence the local dynamics from time $\tau_{1;1}^{+}$ follows the transitions in Figure \ref{fig:3branch_gen_1} and lead to one of the previous cases $b_{4}$ or $c_{6}$. Hence the assertion holds for some $t_{1}\le t_{0}+4$.

	Next, suppose $\lambda_{1}=3/4$. By a similar argument the assertion holds if $P_{v}(\tau_{1;1})$ does not occur. If both $P_{v}(\tau_{w;1})$ and $P_{v}(\tau_{1;1})$ occur by time $\tau_{1;1}$, then an entirely similar argument we used in the previous case applies (see, e.g., the transitions on $a_{i}$'s, $b_{i}$'s, and $c_{i}$'s in Figure \ref{fig:3branch_gen_2} for $\lambda_{1}=3/4$ and $\lambda_{k}=1/4$). Note that if $\lambda_{k}\ne 1/4$, then $P_{v}(\tau_{w;1})$ must occur for $P_{v}(\tau_{1;1})$ to occur. Hence we may assume that $\lambda_{k}=1/4$ and $\Lambda_{w}(\tau_{k;1}^{+})\in [1/4,3/4]$ (as in Figure \ref{fig:3branch_gen_2} $d_{2}$). Then $\mu_{v}(\tau_{1;1}^{+})=(3,1,2)$, so $\lambda_{3}\in [1/4,3/4]$ implies $E_{v}(\tau_{3;1})$ occurs. So assume $\lambda_{3}\in [1/4,1/2)$.

	If $\lambda_{2}\in [1/4,1/2)$, then $\Lambda_{v}(\tau_{1;1}^{+})=0$ and $P_{v}(t)$ does not occur during $(\tau_{1;1},\tau_{v;1}]$, so $\tau_{v;1}=t_{0}+3/2$ and at time $\tau_{v;1}^{+}$, we are back to the previous case in Figure \ref{fig:3branch_gen_1} $a_{1}$. Hence in this case the assertion holds for $t_{1}\le t_{0}+3/2+4$. Otherwise $\lambda_{2}\in [1/2,3/4]$ and $\Lambda_{v}(t_{0}^{+}+1)=3/4$ as in Figure \ref{fig:3branch_gen_2} $d_{6}$. Then $\mu_{v}(t_{0}^{+}+1)=(3,1,0)$ and $P_{v}(\tau_{k;2})$ occurs, so $c:=\mu_{v}^{3}(t_{0}^{+}+3/2)\ge 1$. If $c\ge 2$, then $\Lambda_{v}(t_{0}^{+}+3/2)=1/4$ and $E_{v}(\tau_{2;2})$ occurs. Otherwise $c=1$ and $\Lambda_{v}(t_{0}^{+}+3/2)=1/2$, so $P_{v}(t)$ does not occur during $(t_{0}+3/2,t_{0}+2]$ and hence $\tau_{v;1}=t_{0}+2$. Then the leaves in $B$ have only two relative phases (namely, $1/4$ and $1/2$) at time $t_{0}^{+}+2$ (see $d_{6}\rightarrow e_{7}\rightarrow e_{8}$ in Figure \ref{fig:3branch_gen_2}), as desired. Thus the assertion holds for some $t_{1}\le t_{0}+6$.
\end{proof}

Next, we show Lemma \ref{lemma:branch_attraction} for 2-branches.

\begin{proposition}\label{2branchtoabsorbing}
	Let $G=(V,E)$, $(\Sigma_{\bullet}(t))_{t\ge 0}$, and $(\Lambda_{\bullet}(t))_{t\ge 0}$ be as before. Suppose $G$ has a 2-branch $B$. Then either $\omega_{B}(t_{1}^{+})<1/4$ for some $t_{1}\le 16$ or $B$ has a local configuration in Figure \ref{fig:2branch_limiting_config} at time $t_{2}^{+}$ for some $t_{2}\in [7,14]$. 
\end{proposition}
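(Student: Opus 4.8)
The plan is to dispose of the first alternative and then reduce the whole question to a bare $4$-coupling computation on $B$. Write $v$ for the center of $B$, $w$ for its root and $u_1,u_2$ for its two leaves, so $\deg(v)=3$. If $\omega_B(t_1^+)<1/4$ for some $t_1\le 16$ there is nothing to prove, so assume not; since each $\Lambda_\bullet$, and hence $t\mapsto\omega_B(t)$, is a left-continuous step function, this assumption forces $\omega_B(t)\ge 1/4$ for all $t\in[0,16]$ (the value at $t=0$ may be assumed $\ge 1/4$ as well, else a short direct argument already produces the first alternative). Then Proposition \ref{prop:2branch_rested} applies and yields $\sigma_x(t)\equiv 0$ for every $t\ge 7$ and $x\in V(B)$; moreover, inspecting its proof, $v$ blinks at some $t_0\in[2,7]$ with $\mu_v(t_0^+)=(3,0,0)$, so by Proposition \ref{prop:statebasic}(ii) $v$ stays rested for all $t\ge t_0$, while Proposition \ref{prop:statebasic}(i) keeps both leaves rested for $t\ge 3$. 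Consequently, from time $7$ onward every node of $B$ evolves by the plain $4$-coupling response $f_0$, and the only departure from a $4$-coupled dynamics on $B$ is the pull $w$ may deliver to $v$; by inhibitoriness and Proposition \ref{prop:blinking}, such a pull happens at most once per second and lowers $\phi_v$ by at most $1/4$.

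With this reduction in hand I would anchor at a clean blink of $v$ and chase the $4$-coupling forward. By Proposition \ref{prop:blinking} there is a blink $s_0$ of $v$ with $s_0\in[8,13]$, and since $v$ is rested on the relevant window Proposition \ref{prop:branch_basic}(ii) applies: normalising the reference circle so that $\Lambda_v(s_0^+)=1/2$ puts $\Lambda_{u_1}(s_0^+),\Lambda_{u_2}(s_0^+)\in[1/4,3/4]$, whence $\omega_B(s_0^+)\le 1/2$. Now I would run a case analysis on the positions of the two leaf phases relative to $v=1/2$, using the relative-circular rule \eqref{eq:relative_evolution_4C} (a behind or opposite blinking neighbour pulls $v$, or a leaf, by $-1/4$ or forces a merge; an ahead neighbour is ignored), tracking over a bounded number of further blinks of $v$ together with the intervening leaf-blinks, and book-keeping each possible pull of $v$ by $w$. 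If both leaves lie in $[1/2,3/4]$ then $\omega_B(s_0^+)\le 1/4$ already, with the equality case (a leaf at relative phase $3/4$) resolved by one more blink of $v$ in which that leaf merges onto $v$. If at least one leaf lies in $[1/4,1/2)$, then over the next one or two blinks of $v$ and the leaf-blinks between them the branch either collapses, giving $\omega_B(t^+)<1/4$, or a pull of $v$ by $w$ separates things into exactly the state $\phi_v=0$ with $\{\phi_{u_1},\phi_{u_2}\}$ equal to $\{0,1/4\}$ or to $\{0,\lambda\}$ for some $\lambda\in(1/4,1/2)$ --- that is, into configuration $b$ or $a$ of Figure \ref{fig:2branch_limiting_config}. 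The merging feature of $f_0$ is what collapses the two leaf phases onto the finite set $\{\Lambda_v,\ \Lambda_v+1/4\}\cup\{\Lambda_v+\lambda:1/4<\lambda<1/2\}$ after a bounded number of blinks, which is exactly why only those two pictures can survive; keeping the step count small enough that $s_0$ and its following blinks fall in $[7,14]$ (and inside $[0,16]$ on the collapsing branch) is part of the constant book-keeping.

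The main obstacle is precisely this case analysis and its book-keeping, and inside it the interaction with the root $w$: a pull of $v$ by $w$ between two consecutive blinks of $v$ shifts $\Lambda_v$ and can re-order $v$ against its leaves, so one must verify in each such scenario that the branch is driven either to width $<1/4$ or exactly to configuration $a$ or $b$ of Figure \ref{fig:2branch_limiting_config} --- never to some configuration matching neither figure (for instance $v$ wedged between two leaves a quarter-cycle apart) --- and that all of this happens within the required time windows. After the first-paragraph reduction the situation is essentially finite-state (there are only finitely many qualitative arrangements of two leaves and one root on $S^1$ relative to $v$, up to the merging collapse), so the remaining work is a careful but routine enumeration, of the kind the figures accompanying Lemmas \ref{lemma:branchorbit_a} and \ref{lemma:branchorbit_b} are designed to make checkable.
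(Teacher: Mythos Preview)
Your plan matches the paper's proof: reduce to the all-rested regime via Proposition~\ref{prop:2branch_rested}, anchor at a blink of $v$, normalise $\Lambda_v=1/2$, and chase the finitely many arrangements of the two leaves while tracking the at-most-one pull per second that $w$ can deliver. The paper carries out exactly this enumeration (six starting cases in Figure~\ref{fig:2branch_0}, three of which --- (b), (d), (f) --- require detailed forward-tracking through one or two further blinks of $v$ in Figures~\ref{fig:4tree_combined_1} and~\ref{fig:4tree_combined_2}), and your assessment that the remaining work is ``careful but routine'' is fair. But that enumeration is where essentially all the content lies, and your sketch of it is too loose to stand as written: for instance, your resolution of the $\lambda_1=3/4$ equality case by ``one more blink of $v$ in which that leaf merges onto $v$'' ignores that $w$ may pull $v$ off $1/2$ first, after which the merge need not happen and one has to follow another round.

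Two smaller points. Your appeal to Proposition~\ref{prop:branch_basic}(ii) to confine the leaves to $[1/4,3/4]$ is a genuine shortcut over the paper, which only uses the weaker $\lambda_i\in(0,3/4]$ and therefore must handle the sub-cases with $\lambda_2\in(0,1/4)$; however, the hypothesis there requires $\sigma_v\equiv 0$ on all of $[6,s_0]$, while Proposition~\ref{prop:2branch_rested} directly gives this only from time $7$, so you need one more line (the same excitation-contradiction argument) to cover $[6,7]$. Also, anchoring at $s_0\in(8,13]$ rather than the paper's $t_0=\inf\{t\ge 7:B_v(t)\ \text{occurs}\}\le 12$ risks pushing $t_2$ beyond $14$ once the case analysis adds its extra second or two; the paper's choice of window is tuned to make that constant land.
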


\begin{proof}
	Suppose for contrary that $\omega_{B}(t)\ge 1/4$ for all $t\in [0,16]$. Then all nodes in $B$ have state 0 for all times $t\ge 7$ by Proposition \ref{prop:2branch_rested}. Let $v$ and $w$ be the center and root of $B$, respectively. Let $t_{0}=\inf\{ t\ge 7\,:\, \text{$B_{v}(t)$ occurs} \}$. Then $t_{0}\le 12$ by Proposition \ref{prop:blinking}. Let $\Lambda_{v}(t_{0})=1/2$ and let $L=\{u_{1},u_{2}\}$ be the set of leaves in $B$. Denote $\lambda_{i}:= \Lambda_{u_{i}}(\tau_{0}^{+})$ for $i\in \{1,2\}$. Then $\lambda_{i}\in (0,3/4]$. We may assume that $\omega_{B}(t_{0}^{+})\ge 1/4$ and $B$ does not have one of the local configurations in Figure \ref{fig:2branch_limiting_config}.

	\begin{figure*}[h]
		\centering
		\includegraphics[width = 1 \linewidth]{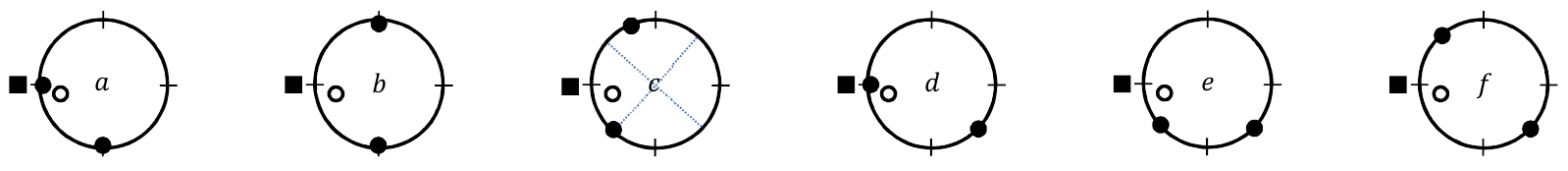}
		\vspace{-0.1cm}
		\caption{ (In relative circular representation) Possible Local configurations on $B$ right after its center blinks. $\blacksquare =v$ center, $\bullet = \text{leaves}$,  and $\circ= $ activator.
		}
		\label{fig:2branch_0}
	\end{figure*}  
	
	By interchanging the roles of $u_{1}$ and $u_{2}$, we may assume that $B$ has one of the following classes of local configurations in Figure \ref{fig:2branch_0}: (a) $(\lambda_{1},\lambda_{2})=(1/2,1/4)$, (b) $(\lambda_{1},\lambda_{2})=(3/4,1/4)$, (c) $\lambda_{1}\in (1/2,3/4]$ and $\lambda_{1}-\lambda_{2}\in [1/4,1/2)$, (d) $\lambda_{1}=1/2$ and $\lambda_{2}\in (0,1/4)$, (e) $\lambda_{1}\in (1/4,1/2)$ and $\lambda_{2}\in (0,1/4)$, and lastly (f) $\lambda_{1}\in (1/2,3/4]$, $\lambda_{2}\in (0,1/4]$, and $(\lambda_{1},\lambda_{2})\ne (3/4,1/4)$. For cases (a) and (c), $B$ gets a desired local configuration in 1/4 second. Similarly, case (e) leads to (d) in 1/4 second. Hence we only need to consider cases (b), (d), and (f). In all cases, we show the second assertion holds for some $t_{2}\le t_{0}+2 \le 14$.

	\begin{figure*}[h]
		\centering
		\includegraphics[width=0.9 \textwidth]{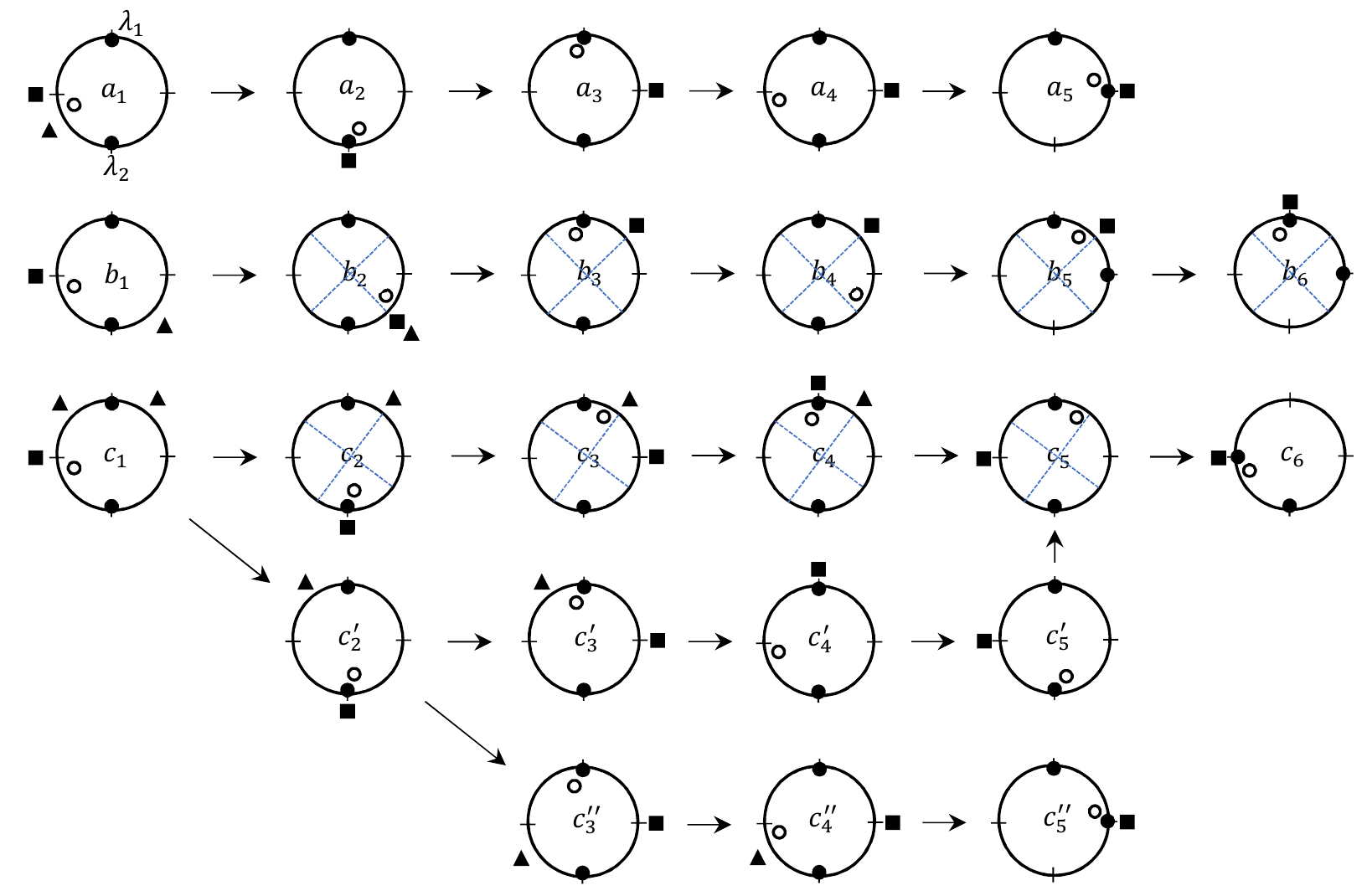}
		\caption{ 
			(In relative circular representation) A local configuration chasing on branch $B$. $\blacksquare =v$ center, $\bullet = \text{leaves}$, $\circ= $ activator, and $\blacktriangle = w$ root.
		}
		\label{fig:4tree_combined_1}
	\end{figure*}

	Now suppose $B$ has the local configuration Figure \ref{fig:2branch_0} $b$ at time $t_{0}^{+}$. If $\Lambda_{w}(t_{0}^{+})\in [1/4,1/2]$ as in Figure \ref{fig:4tree_combined_1} $a_{1}$, then $\Lambda_{v}(t_{0}^{+}+1/4)=1/4$ as in the transition $a_{1}\rightarrow a_{2}$. Then $v$ is not pulled by $w$ during $(t_{0}+1/4,t_{0}+1]$ but it is pulled by the leaf in $B$ at relative phase $-1/4$ at time $t_{0}^{+}+3/4$. So we have $a_{2}\rightarrow a_{3}\rightarrow a_{4}$. Then $v$ pulls one of the leaf in $B$ at time $t_{0}+3/2$, so $a_{4}$ leads to $a_{5}$, which is the local configuration in Figure \ref{fig:2branch_limiting_config} $b$. Hence the assertion holds for $t_{2}=t_{0}+3/2+1/4$. An entirely similar argument shows that when $\Lambda_{w}(t_{0}^{+})\in [0,1/4)$, the assertion hold for $t_{2}=t_{0}+7/4$ (see Figure \ref{fig:4tree_combined_1} $b_{1}\rightarrow b_{6}$). 
	
	Now let $\Lambda_{w}(t_{0}^{+})\in [1/4,1/2]$ as in Figure \ref{fig:4tree_combined_1} $c_{1}$. Then $\Lambda_{v}(t_{0}^{+}+1/4)=1/4$ and either $\Lambda_{w}(t_{0}^{+}+1/4)\in [-1/4, 0)$ ($c_{1}\rightarrow c_{2}$) or $\Lambda_{w}(t_{0}^{+}+1/4)\in [1/4, 3/4)$ ($c_{1}\rightarrow c_{2}'$). In the former case, the transition $c_{2}\rightarrow c_{6}$ shows that the assertion holds for $t_{2}=t_{0}+2$. In the latter case, either $\Lambda_{w}(t_{0}^{+}+3/4)\in [1/2,3/4)$ and $c_{2}'\rightarrow c_{3}'$, or $\Lambda_{w}(t_{0}^{+}+3/4)\in [0,1/2)$ and $c_{2}'\rightarrow c_{3}''$. In the first case, $v$ is pulled by $w$ during $(t_{0}+3/4,t_{0}+1]$ so we have the transition $c_{3}'\rightarrow c_{5}'\rightarrow c_{5}\rightarrow c_{6}$; hence the assertion holds for $t_{2}=t_{0}+2+1/4$. In the second case, $v$ is not pulled by $w$ during $[t_{0}+4/3,t_{0}+3/2]$, so $c_{3}''\rightarrow c_{5}''$ and the assertion holds for $t_{2}=t_{0}+3/2+1/4$.

	\begin{figure*}[h]
		\centering
		\includegraphics[width=0.9 \textwidth]{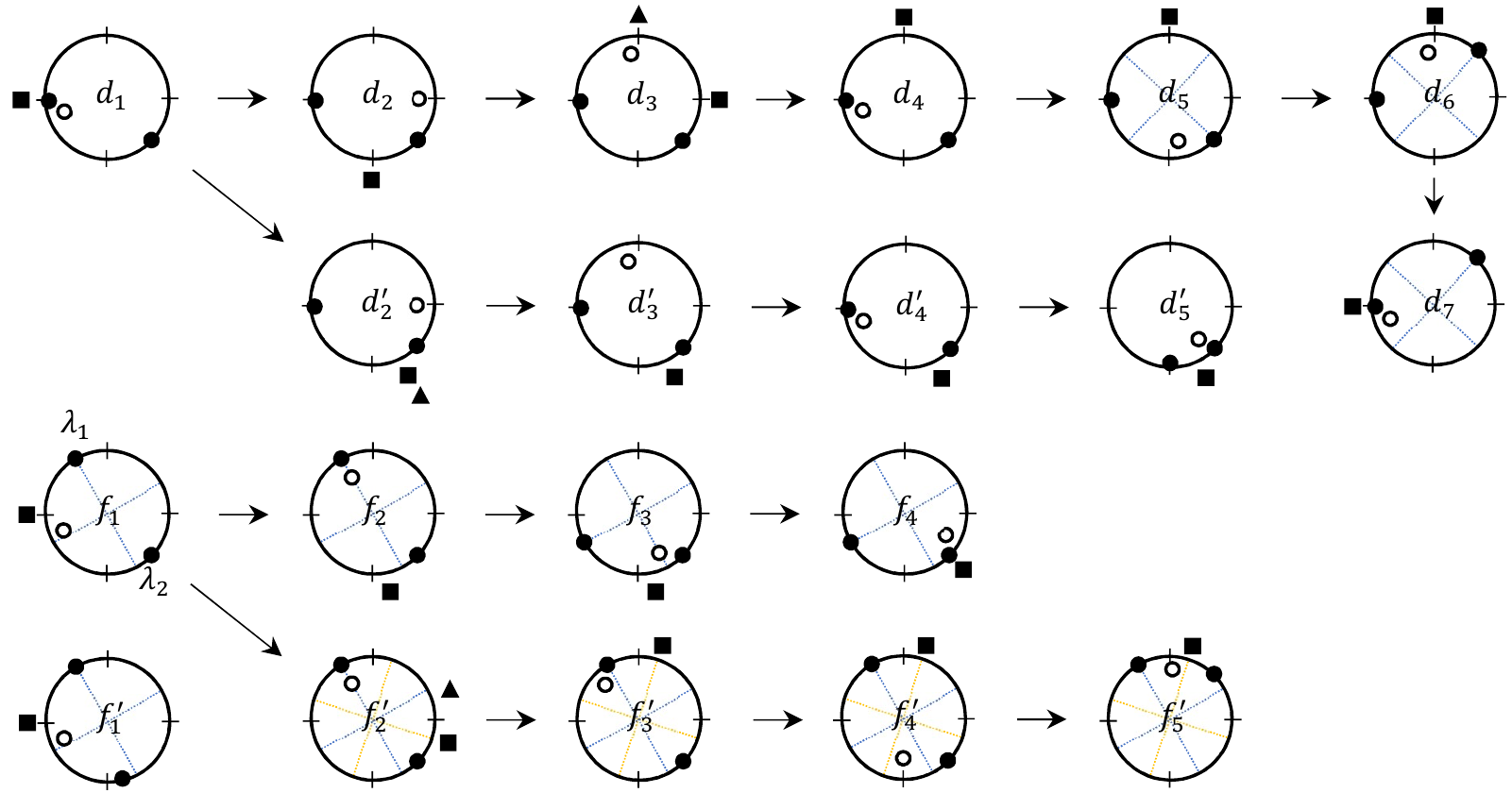}
		\caption{ 
			(In relative circular representation) A local configuration chasing on branch $B$. $\blacksquare =v$ center, $\bullet = \text{leaves}$, $\circ= $ activator, and $\blacktriangle = w$ root.
		}
		\label{fig:4tree_combined_2}
	\end{figure*}  
	
	On the other hand, suppose $B$ has the local configuration in Figure \ref{fig:2branch_0} $d$ at time $t_{0}^{+}$. If $\Lambda_{w}(t_{0}^{+}+3/4)\in (0,1/4]$ (as in  Figure \ref{fig:4tree_combined_2} $d_{3}'$), then $P_{v}(t)$ does not occur during $(t_{0}+3/4,t_{0}+1]$, so $B_{v}(t_{2})$ occur for some $t_{2}\in (t_{0}+5/4,t_{0}+3/2]$ and $\omega_{B}(t_{2}^{+})<1/4$ (see $d_{2}'\rightarrow d_{5}'$ in Figure \ref{fig:4tree_combined_2}). So we may assume $\Lambda_{v}(t_{0}^{+}+3/4)=1/4$ (as in  Figure \ref{fig:4tree_combined_2} $d_{3}$). Then the rest of transition $d_{3}\rightarrow d_{7}$ in Figure \ref{fig:4tree_combined_2} is straightforward. Note that $d_{7}$ is the desired local configuration in Figure \ref{fig:2branch_limiting_config} $a$. Hence the assertion holds for some $t_{2}\le t_{0}+2\le 12$. This also yields the assertion for case (e) holds for some $t_{2}\le t_{0}+3\le 13$. For case (f), a similar argument shows that the assertion holds for some $t_{2}\le t_{0}+2$ (see the transitions between $f^{*}_{i}$'s in Figure \ref{fig:4tree_combined_2}). This shows the assertion.	
\end{proof}

\newpage

At this point Lemma \ref{lemma:branch_attraction} follows easily. 

\begin{proof}[Proof of Lemma \ref{lemma:branch_attraction}]
	For $k\le 2$, by Proposition \ref{2branchtoabsorbing}, either $\omega_{B}(s_{1}^{+})<1/4$ for some $s_{1}\le 16$ or (ii) holds at time $s_{2}^{+}$ for some $s_{2}\in [7,12]$. In the former case, if $s_{1}<7$, we may choose $t_{1}\in [7,12]$ so that $B_{v}(t_{1})$ occurs according to Proposition \ref{prop:blinking}. Then by Lemma \ref{lemma:branchwidth} (i), $\omega_{B}(t_{1}^{+})<1/4$ as desired. Hence the assertion holds with $E_{k}=16$. 
	
	Suppose $k\ge 3$. By Proposition \ref{prop:blinking}, the center $v$ of $B$ blinks at some time $t_{0}\in [10,15]$. If Proposition \ref{prop:3branch_gen_to_right_half} (i) or (ii) holds, then it reduces back to the $k\le 2$ case by time $t=21$ so the assertion holds with $E_{k}\le 37$ by the previous case. Otherwise, Proposition \ref{prop:3branch_gen_to_right_half} (iii) holds for some $t_{1}\in [8,21]$ so by Proposition \ref{prop:branch_excitation}, the assertion holds with $E_{k}\le 21+8$. Hence in all cases, Lemma \ref{lemma:branch_attraction} holds with $E_{k}=16+21\cdot \mathbf{1}[k\ge 3]$. 
\end{proof}

\vspace{0.2cm}
\subsection{Local limit cycles on branches.}
\label{subsection:locallimitcycle}

In this subsection, we analyze local dynamics on a 2-branch, starting from the absorbing local configurations in Figure \ref{fig:2branch_limit}. The goal here is to show Lemmas \ref{lemma:branchorbit_a} and \ref{lemma:branchorbit_b}. We first prove Lemma \ref{lemma:branchorbit_b}. 

\begin{proof}[Proof of Lemma \ref{lemma:branchorbit_b}]
	Suppose that (i) does not hold. Then by Proposition \ref{prop:2branch_rested}, $\sigma_{x}(t)\equiv 0$ for all $t\ge t_{0}$ and $x\in V(B)$.  Forward-tracking of local dynamics on $B+w$ from time $t_{0}^{+}$ is developed in Figure \ref{4tree1}, where each arrow shows a transition between local configuration on $B+w$ of duration at most 1/2 second. The local configuration $b_{1}^{*}$ in Figure \ref{4tree1} represents the one in Figure \ref{fig:2branch_limiting_config} $b$ on $B$ together with $\Lambda_{w}\in [0,1/4]$.

	We claim the following statements in order:
	\vspace{0.2cm}
	\begin{description}[noitemsep]
		\item{(1)} If $\omega_{B}(t^{+})\ge 1/4$ for all $t\in [t_{0},t_{0}+2]$, then $B+w$ has $b_{1}^{*}$ at time $t_{0}^{+}$ and $b_{5}$ at time $t_{0}^{+}+5/4$.    

		\item{(2)} If $\omega_{B}(t^{+})\ge 1/4$ for all $t\in [t_{0},t_{0}+4]$, then $\Lambda_{w}(t_{0}+7/4)\in [0,1/4]$. 
		\vspace{0.1cm} 
		\item {(3)}  Suppose $\omega_{B}(t^{+})\ge 1/4$ for all $t_{1}\in [t_{0},t_{0}+6]$. 
		\vspace{0.1cm}
		\begin{description}[noitemsep]
			\item{(3-1)} If $\Lambda_{w}(t_{0}+7/4)\in [0,1/4)$ ($b_{6}'$ or $b_{6}''$), then $B+w$ has $b_{8}$ or $b_{8}'''$ at time $t_{0}^{+}+2$ and $\sigma_{w}(t_{0}+7/4)=2$.
			\vspace{0.1cm}
			\item{(3-2)} If $\Lambda_{w}(t_{0}+7/4)=1/4$ ($b_{6}$), then either $\sigma_{w}(t_{0}+7/4)=0$ and $B+w$ has $b_{8}$ at time $t_{0}^{+}+2$ or $\sigma_{w}(t_{0}+7/4)=2$ and $B+w$ has $b_{8}'$ at time $t_{0}^{+}+2$. 
		\end{description}
		\vspace{0.1cm}
		\item {(4)} Suppose $\omega_{B}(t^{+})\ge 1/4$ for all $t_{1}\in [t_{0},t_{0}+8]$. If $\sigma_{w}(t_{0}+7/4)\in \{0,2\}$, then  $\sigma_{w}(t_{0}+2+7/4)=0$ and $\Lambda_{w}(t_{0}+7/4)=1/4$.
	\end{description} 
	\vspace{0.2cm}
	
	\begin{figure*}[h]
		\centering
		\vspace{-0.3cm}
		\includegraphics[width = 1 \linewidth]{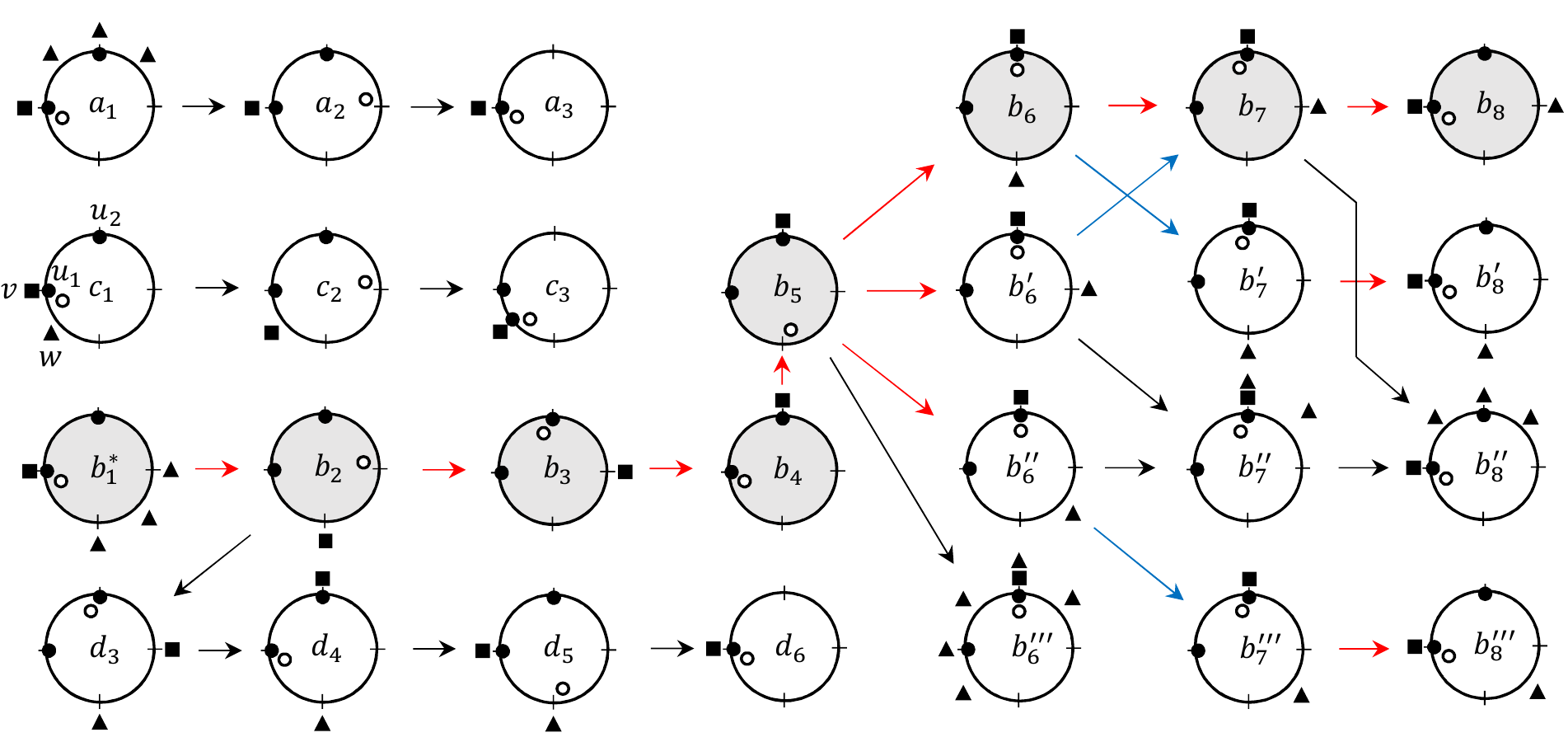}
		\caption{(In relative circular representation) A local dynamics chasing on a 2-branch starting from the local configuration in Figure \ref{fig:2branch_limiting_config} $b$. $\blacksquare =v$ center, $\bullet = \text{leaves}$, $\circ= $ activator, and $\blacktriangle = w$ root. All transitions take at most 1/2 second.
		}
		\label{4tree1}
	\end{figure*} 
	
	We first observe that the assertion follows from the above claim. Suppose (i) does not hold. By (1)-(3), $B+w$ has the local configuration $b_{1}^{*}$ at times $t_{0}+2k$ for all $k\in \mathbb{N}_{0}$ such that $2k\le \mathtt{b}-6$. Then by (2) and (3), $\sigma_{w}(t_{0}+2k+7/4)\in \{0,2\}$ for all such $k$, and by (4) we have $\Lambda_{w}(t_{0}+7/4)=1/4$ so that $\sigma_{w}(t_{0}+2k+7/4)=0$ for all $k\in \mathbb{N}$ with $2k+7/4\le \mathtt{b}-6$. Thus we conclude that $B+w$ repeats the transition $b_{1}^{*}\rightarrow b_{8}$ during $(t_{0}+2, t_{0}+\mathtt{b}-8]$, as desired. 
	
	Now we show the claim. Label the two leaves by $u_{1}$ and $u_{2}$ as in Figure \ref{4tree1} $b_{1}$. The claim is easy if $\Lambda_{w}(t_{0}^{+})\notin [0,1/4]$. Indeed, if $\Lambda_{w}(t_{0}^{+})\in (1/2,1]$, $v$ is not pulled by $w$ in the following 1/2 second so it blinks again after 1 second at the same relative phase, synchronizing the leaves (transition $a_{1}\rightarrow a_{3}$ in Figure \ref{4tree1}). Hence the assertion holds with $\omega_{B}(t_{1}^{+})<1/4$ for some $t_{1} \in (t_{0},t_{0}+1]$. Second, if $\Lambda_{w}(t_{0}^{+})\in (1/4,1/2)$, then $v$ is pulled by $w$ but not enough to be pulled by $u_{2}$, so when $v$ blinks again, it pulls the two leaves and makes the branch width $<1/4$ (transition $c_{1}\rightarrow c_{3}$ in Figure \ref{4tree1}). Hence $\omega_{B}(t_{1})<1/4$ for some $t_{1}\in (t_{0},t_{0}+5/4)$. Thus we may assume $\Lambda_{w}(t_{0}^{+})\in [0,1/4]$ as in Figure \ref{4tree1} $b_{1}^{*}$. Then the transition $b_{1}^{*}\rightarrow b_{2}$ is clear. Suppose $\Lambda_{w}(t_{0}^{+}+3/4)\in [1/4,3/4]$. Since $\Lambda_{w}(t_{0}^{+})\in [0,1/4]$, this implies $\Lambda_{w}(t_{0}^{+}+3/4)=1/4$, so $b_{2}\rightarrow d_{3}$. Then it leads to $d_{6}$, where $v$ pulls the leaf of relative phase $-1/4$ and making $\omega_{B}(t_{0}^{+}+1)=0$. Thus $\Lambda_{w}(t_{0}^{+}+3/4)\in (-1/4,1/4)$ so $b_{2}$ leads to $b_{5}$ by time $t_{0}^{+}+5/4$. This shows (1). 
	
	To show (2), suppose $\omega_{B}(t^{+})\ge 1/4$ for all $t_{1}\in [t_{0},t_{0}+4]$. By (1), $B+w$ has the local configuration $b_{5}$ at time $t_{0}^{+}+5/4$. If $\Lambda_{w}(t_{0}+7/4)\in (1/4, 1)$ as in Figure \ref{4tree1} $b'''_{6}$, then at time $t_{0}^{+}+2$ it leads back to the previous cases of Figure \ref{4tree1} $a_{1}$ or $c_{1}$; hence $\omega_{B}(t_{1})<1/4$ for some $t_{1}\in [t_{0},t_{0}+2+5/4]$, a contradiction. This shows (2). 
	
	Next we show (3). Suppose $\omega_{B}(t^{+})\ge 1/4$ for all $t_{1}\in [t_{0},t_{0}+6]$. For (3-1), first suppose $\Lambda_{w}(t_{0}+7/4)\in [0,1/4)$ ($b_{6}'$ or $b_{6}''$). If $\sigma_{w}(t_{0}+7/4)=0$, then $B+w$ has the local configuration $b_{8}''$ at time $t_{0}^{+}+2$, so by the previous case $a_{1}$ we have $\omega_{B}(t_{0}^{+}+2+1)<1/4$. If $\sigma_{w}(t_{0}+7/4)=1$, then $B+w$ has the local configurations $b_{8}$ or $b_{8}'''$ at time $t_{0}^{+}+2$ and $w$ recovers state 0 at some time $t_{1}\in (t_{0}+3+1/4, t_{0}+3+1/2]$ (at a relative phase $\in [0,1/4)$) so by (2), $B+w$ has the local configurations $b_{6}'$ or $b_{6}''$ at time $t_{0}^{+}+2+7/4$. But this implies $\sigma_{w}(t_{0}^{+}+2+7/4)=0$; in order to get excited during $(t_{1},t_{0}+2+7/4]$, since $B_{w}(t_{1})$ occurs, $\Lambda_{w}$ should decrease by $>1/4$ (by Proposition \ref{prop:E_v(t)_needs_enough_pull} (i)), so $\Lambda_{w}(t_{0}^{+}+2+7/4)\in [-1/4,0)$ for contrary. Hence by the previous case, we get $\omega_{B}(t_{0}^{+}+2+2+1)<1/4$, a contradiction. This shows (3-1).
	
	For (3-2), let $\Lambda_{w}(t_{0}+7/4)=1/4$. Then by (1) we have the transition $b_{5}\rightarrow b_{6}$ during $(t_{0}+5/4,t_{0}+7/4]$, which then leads to either $b_{8}$, $b_{8}'$ or $b_{8}''$ by time $t_{0}^{+}+2$. If the transition $b_{7}\rightarrow b_{8}''$ is used, then $B+w$ starts over the similar local dynamics with local configuration $b_{8}''\equiv a_{1}$ so by the previous case we have $\omega_{B}(t_{0}^{+}+2+1)<1/4$, a contradiction. Hence $B+w$ has $b_{8}$ or $b_{8}'$ at time $t_{0}^{+}+2$. Lastly, if $\sigma_{w}(t_{0}+7/4)=1$, then it is easy to see that $B+w$ has the transition $b_{8}(\subset b_{1}^{*})\rightarrow b_{2}\rightarrow d_{3}$ starting from time $t_{0}^{+}+2$. Thus by the previous case,  $\omega_{B}(t_{0}^{+}+4)=0$, a contradiction. This shows (3-2). 
	
	Lastly, we show (4). Suppose $\omega_{B}(t^{+})\ge 1/4$ for all $t\in [t_{0},t_{0}+8]$ and $\sigma_{w}(t_{0}+7/4)=2$. Then by (1)-(3) $\Lambda_{w}(t_{0}+7/4)\in [0,1/4]$ and $B+w$ has $b_{1}^{*}$ at time $t_{0}^{+}+2$. Restarting dynamics from time $t_{0}^{+}+2$, (3) implies $\sigma_{w}(t_{0}+2+7/4)\in \{0,2\}$. Suppose for contrary that $\sigma_{w}(t_{0}+2+7/4)=2$. Starting from times $t_{0}^{+}+2$ with local configuration $b_{1}^{*}$ on $B+w$, note that $w$ blinks and recovers state 0 at some time $s_{1}\in [t_{0}+2+1/4, t_{0}+2+1/2]$. Hence if we denote $t_{2}=\inf\{ t\ge s_{1}\,:\, \sigma_{w}(t_{2})=2 \}$, then $t_{2}\le t_{0}+2+7/4\le s_{1}+2/3$. Thus by Proposition \ref{prop:E_v(t)_needs_enough_pull} (ii), we have $\Lambda_{w}(t_{0}+2+7/4)\in [3/4,1)$. However, by (2) and (3) applied from time $t_{0}^{+}+2$, $\Lambda_{w}(t_{0}+2+7/4)\in [0,1/4]$ and $\sigma_{w}(t_{0}+2+7/4)\in \{0,2\}$. Hence we must have $\sigma_{w}(t_{0}+2+7/4)=0$, as desired. 
	
	On the other hand, suppose $\sigma_{w}(t_{0}+7/4)=0$. Then by (2) and (3), we have $\Lambda_{w}(t_{0}+7/4)=1/4$. If $\sigma_{w}(t_{0}^{+}+2)=1$, then $B+w$ has the transition $b_{8}(\subset b_{1}^{*})\rightarrow b_{5}$ starting from time $t_{0}^{+}+2$, and $w$ recovers state $0$ at time $t_{0}+2+3/2$ at relative phase $0$. Hence $\Lambda_{w}(t_{0}^{+}+2+7/4)\in [3/4,1)$ by Proposition \ref{prop:E_v(t)_needs_enough_pull} (i), so it leads to $b_{6}'''$ and we get $\omega_{B}(t_{0}+2+2+1)<1/4$ by a previous case, which is a contradiction. Thus we must have $\sigma_{w}(t_{0}^{+}+2)=0$. Then $B_{w}(t_{0}+2+1/2)$ occurs with $\sigma_{w}(t_{0}^{+}+2+1/2)=0$ and $\Lambda_{w}(t_{0}^{+}+2+1/2)=0$ (in $b_{2}$). Then by Proposition \ref{prop:E_v(t)_needs_enough_pull} (ii), it is impossible to have $\Lambda_{w}(t_{0}+2+7/4)\in [0,1/4]$ with $\sigma_{w}(t_{0}+2+7/4)=2$. Hence by (2) and (3) applied from time $t_{0}^{+}+2$, we conclude that $\sigma_{w}(t_{0}+2+7/4)=0$. This shows the assertion. 
\end{proof}

We finish this subsection by showing Lemma \ref{lemma:branchorbit_a}.

\begin{proof}[Proof of Lemma \ref{lemma:branchorbit_a}]
	Suppose that (i) does not hold. As in the beginning of proof of Lemma \ref{lemma:branchorbit_b}, we may assume that all nodes in $B$ have state 0 for all times $t\ge t_{0}$. Denote by $v\in V$ the center of $B$ and let $L$ be the set of leaves in $B$.  We may assume that $\Lambda_{v}(t_{0}^{+})=1/2$ and the two leaves in $B$ have relative phases $\lambda_{1}=1/2$ and $\lambda_{2}\in (-1/4,0)$.  Forward-tracking of the local dynamics starting from the local configuration in Figure \ref{fig:2branch_limiting_config} $a$ at time $t_{0}^{+}$ is developed in Figure \ref{fig:2branch_limit_a_pf}.

	Note that $\Lambda_{v}(t_{0}^{+}+1/2)\in [1/4,1/2]$ since $v$ can only be pulled by $w$ during $(t_{0},t_{0}+1/2]$. We claim the followings, from which Lemma \ref{lemma:branchorbit_a} (i)-(iii) follows immediately: 
	\vspace{0.1cm}
	\begin{description}
		\item{(1)} If $\Lambda_{w}(t_{0}^{+})\notin [0,\lambda_{2}+1/2]$, then $\omega_{B}(t_{1}^{+})<1/4$ for some $t_{1}\in [t_{0}+1,t_{0}+5/4]$ and the joint trajectory restricts on $T-L$ during $(t_{0},\infty)$. (Transitions $b_{1}\rightarrow b_{3}$ and $c_{1}\rightarrow c_{4}$ in Figure \ref{fig:2branch_limit_a_pf}). 
		\vspace{0.1cm}
		\item{(2)} If $\Lambda_{w}(t_{0}^{+})\in [0,1/4]$, then $B$ has the local configuration in Figure \ref{fig:2branch_limiting_config} $b$ at time $t_{0}^{+}+2$. (Transitions $d_{1}\rightarrow d_{7}$ in Figure \ref{fig:2branch_limit_a_pf}). 
		\vspace{0.1cm}
		\item{(3)} If $\Lambda_{w}(t_{0}^{+})\in (1/4,\lambda_{2}+1/2]$, then $B$ has the local configuration in Figure \ref{fig:2branch_limiting_config} $a$ (up to rotation) at time $t_{0}^{+}+3/2-\lambda_{2}$. (Transitions $a_{1}\rightarrow a_{8}$ in Figure \ref{fig:2branch_limit_a_pf}). 
	\end{description}
	\vspace{0.1cm}
	
	\begin{figure*}[h]
		\centering
		\includegraphics[width = 0.9 \linewidth]{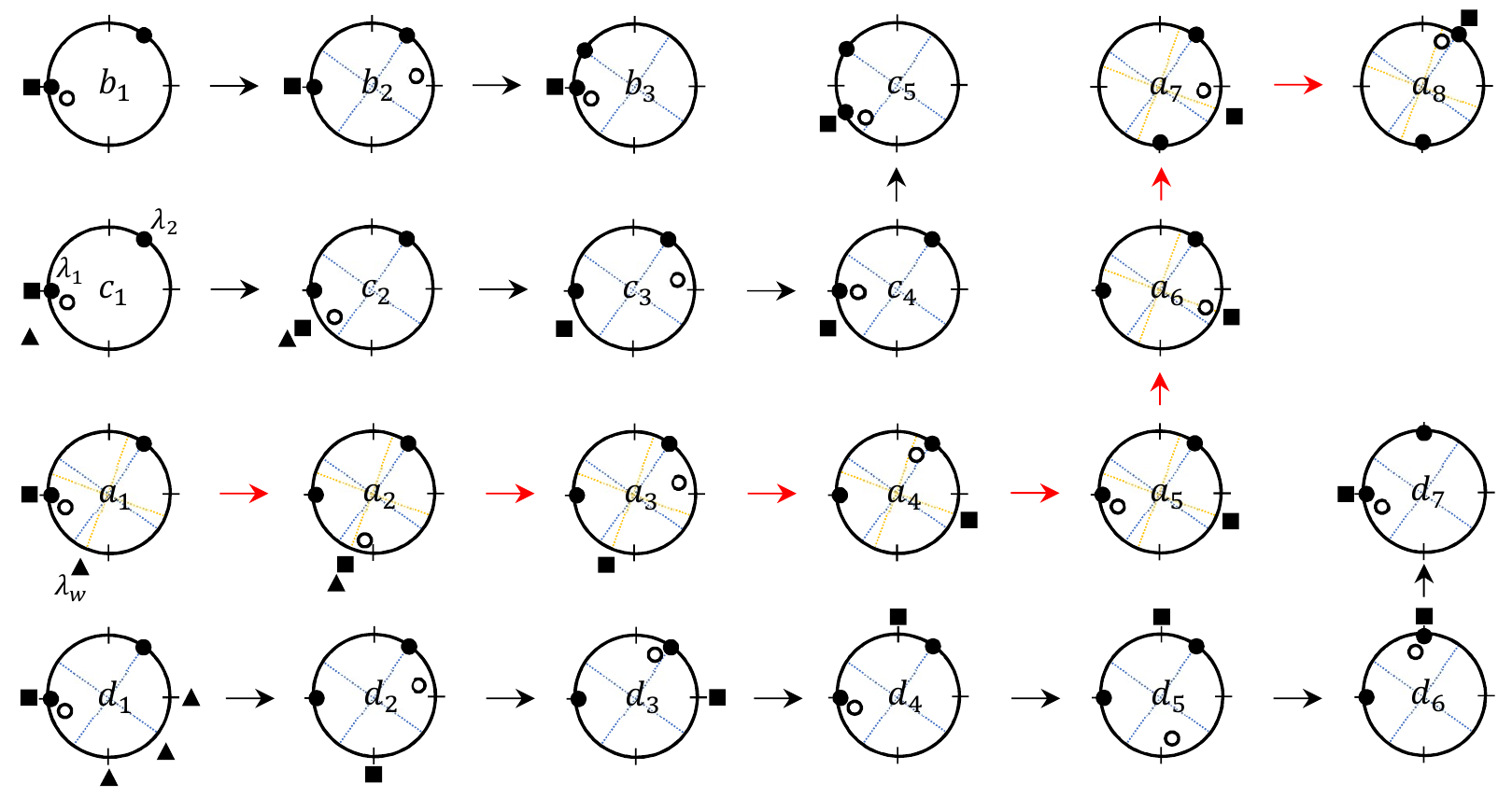}
		\vspace{-0.2cm}
		\caption{ (In relative circular representation) Forward-tracking of local dynamics on a 2-branch starting from the local configuration in Figure \ref{fig:2branch_limiting_config} $a$. $\blacksquare$, $\bullet$, $\blacktriangle$, and $\circ$ denote the relative phases of center, leaves, root, and the activator, respectively. 
		}
		\label{fig:2branch_limit_a_pf}
	\end{figure*}  
	
	For (1), observe that the hypothesis implies  $\Lambda_{v}(t_{0}^{+}+1/2)\in [\lambda_{2}+1/2, 1/2]$. If $\Lambda_{v}(t_{0}^{+}+1/2)=1/2$, then $\omega_{B}(t_{0}^{+}+1)<1/4$ (see the transition $b_{1}\rightarrow b_{3}$ in Figure \ref{fig:2branch_limit_a_pf}). If $\Lambda_{v}(t_{0}^{+}+1/2)\in [\lambda_{2}+1/2,1/2)$ (as in Figure \ref{fig:2branch_limit_a_pf} $c_{2}$), then $\omega_{B}(t_{1}^{+})<1/4$ for some $t_{1}=t_{0}+1-\lambda_{2}$ (see the transition $c_{1}\rightarrow c_{4}$ in Figure \ref{fig:2branch_limit_a_pf}). For (2), the hypothesis yields $\Lambda_{v}(t_{0}^{+}+1/2)=1/4$ (as in Figure \ref{fig:2branch_limit_a_pf} $d_{2}$) and it is easy to see that $B$ has the local configuration in Figure \ref{fig:2branch_limiting_config} $b$ at time $t_{0}^{+}+2$ (see the transition $d_{1}\rightarrow d_{7}$ in Figure \ref{fig:2branch_limit_a_pf}). For (3), the hypothesis gives $\Lambda_{v}(t_{0}^{+}+1/2)\in (1/4, \lambda_{2}+1/2)$ as in Figure \ref{fig:2branch_limit_a_pf} $a_{3}$. Then the rest of transition $a_{3}\rightarrow a_{8}$ is straightforward. Notice that the local configuration in Figure \ref{fig:2branch_limit_a_pf} $a_{8}$ is equivalent to $a_{1}$ up to rotation. This shows the claim.

	To show Lemma \ref{lemma:branchorbit_a} (iv), suppose that Lemma \ref{lemma:branchorbit_a} (ii) holds for $B$ and let $B'$ be any branch in $T$ rooted at $w$. Let $L'$ be the set of leaves in $B'$. It suffices to show that the dynamics on $T$ restrict on $T-L'$ or $T-B'$ during $(t_{0}+21,t_{0}+\mathtt{a}-6]$. First suppose $B'=B$.  Note that $v$ blinks exactly once during the transition $a_{1}\rightarrow a_{8}$ at $a_{6}$. Observe that in order for another transition $a_{1}\rightarrow a_{8}$ to occur from time $t_{1}^{+}$ where $t_{1}:=t_{0}+3/2+\lambda_{2}$ (from $a_{8}$ in Figure \ref{fig:2branch_limit_a_pf}), we need to have $\Lambda_{w}(t_{1}^{+})\in (\lambda_{2}-1/4,-1/4]$. By back-tracking, it then follows that $w$ is right to $v$ in $a_{6}$, so $w$ is not pulled by $v$ during the transition $a_{1}\rightarrow a_{8}$. Hence it follows that $w$ is never pulled by $v$ during $[t_{0},t_{0}+\mathtt{a}-4]$ so that the dynamics on $T$ restrict on $T-B$ during this period. 
	
	Second, suppose $B'\ne B$ is any branch in $T$ rooted at $w$. By Lemma \ref{lemma:branchwidth}, $\omega_{B'}(t)<1/4$ for some $t\in [0,t_{0}+18]$ implies that the joint trajectory restricts on $T-L'$ after time $t=t_{0}+21$. Hence we may assume $\omega_{B'}(t)\ge 1/4$ for all $t\in [0,t_{0}+18]$. Hence by Lemma \ref{lemma:branch_attraction}, $B'$ has one of the two local configurations in Figure \ref{fig:2branch_limiting_config} at least once during before time $E_{\Delta-1}\le t_{0}$. Then by Lemmas \ref{lemma:branchorbit_a} (i)-(iii) and \ref{lemma:branchorbit_b}, either of the transitions $a_{1}\rightarrow a_{8}\equiv a_{1}\rightarrow a_{8}$ or $b_{1}\rightarrow b_{1}\rightarrow b_{1}$ in Figure \ref{fig:2branch_limit} occurs on $B'+w$ during $t\in (t_{0},t_{0}+16]$. However, note that the local dynamics on $B+w$ induces that $w$ blinks once in every $<7/4$ second during $(t_{0},t_{0}+\mathtt{a}-2]$ but the latter transition $b_{1}\rightarrow b_{1}\rightarrow b_{1}$ on $B'+w$ requires $w$ to blink twice with the gap of 2 seconds some times during $(t_{0},t_{0}+16]$, which is impossible. Thus $B'+w$ must repeat the former transition $a_{1}\rightarrow a_{8}\equiv a_{1}$ during $(t_{0},t_{0}+16]$. 
	
	We show that in fact the same transition $a_{1}\rightarrow a_{8}$ repeats during the extended period  $(t_{0},t_{0}+\mathtt{a}-4]$. Then by the previous case, the joint trajectory restricts on $T-B'$ during $(t_{0},t_{0}+\mathtt{a}-6]$, as desired. Suppose not. By (1)-(3) and the observation in the previous paragraph, the local orbit on $B'+w$ must repeat the transition $a_{1}\rightarrow a_{8}$ and then use $b_{1}\rightarrow b_{3}$ or $c_{1}\rightarrow c_{5}$ in Figure \ref{fig:2branch_limit_a_pf} at least once during $(t_{0},t_{0}+\mathtt{a}-4+3/4)$. In fact, we claim that it has to be the latter transition. To see this, let $\tau_{w;0}\le t_{0}+\mathtt{a}-5$ be the time that $w$ blinks during the last transition $a_{1}\rightarrow a_{8}$ on $B'+w$, and let $\tau_{w;i}$ for $i\ge 1$ be the $i^{\text{th}}$ time that $w$ blinks after time $\tau_{w;0}$. Let $v'$ be the center of $B'$. At time $\tau_{w;0}^{+}$, $B+w$ has the local configuration in Figure \ref{fig:2branch_limit_a_pf} $a_{2}$ (after rotation, if necessary). Denote $\lambda_{w}=\Lambda_{w}(\tau_{w;0}^{+})$ and the relative phases of the two leaves in $B'$ at time $\tau_{w;0}^{+}$ by $\lambda_{1}'$ and $\lambda_{2}'$. Then we should have $\lambda_{1}'\in (\lambda_{w},\lambda_{w}+1/4)$ and $\lambda_{2}'\in [\lambda_{w}+1/2,\lambda_{1}'+1/2)\subset (-1/4,\lambda_{1}'+1/2)$. Recall that $w$ must blink at relative phase $\in [\lambda_{2}-1/4,3/4]$ at time $\tau_{w;1}$ in order to make another transition $a_{1}\rightarrow a_{8}$ on $B+w$. After the last transition $a_{1}\rightarrow a_{8}$ on $B'+w$, we have $\Lambda_{v'}=\lambda_{2}'$ as similarly in $a_{8}$. Thus $w$ pulls $v'$ at time $\tau_{w;1}$. Hence $B'+w$ must have the transition $c_{1}\rightarrow c_{5}$ starting at time $t_{1}^{+}$. 
	
	To finish the proof, recall that $\tau_{w;1}$ is the time that $w$ blinks in Figure \ref{fig:2branch_limit_a_pf} $c_{2}$ for $B'+w$, so $\Lambda_{v'}(\tau_{w;1}^{+})=\Lambda_{w}(\tau_{w;1}^{+})=\lambda_{w}$. Observe that the dynamics on $T$ restrict on $T-L'$ after time $\tau_{w;1}$ so $\Lambda_{v'}(t)\equiv \lambda_{w}$ during $(\tau_{w;1},\tau_{w;2}]$. On the other hand, $B+w$ undergoes another transition $a_{2}\rightarrow a_{8}$ from time $\tau_{w;1}^{+}$ to say time $\tau^{+} \le \tau_{w;2}^{+}$. Since another transition $a_{1}\rightarrow a_{8}$ continues from time $\tau^{+}$, we need to have $\Lambda_{w}(\tau^{+})\in [\Lambda_{v}(\tau^{+})-1/4, 3/4 ]$ (in Figure \ref{fig:2branch_limit_a_pf} $a_{8}$). But since $\lambda_{w}\in (1/4, \Lambda_{v}(\tau^{+})-1/2]$ and $v'$ keeps the relative phase $\lambda_{w}$ during $(\tau_{w;1},\tau_{w;2}]$, $w$ had to be pulled by $v'$ during $a_{2}\rightarrow a_{6}$ starting at time $\tau_{w;1}^{+}$, so this leads to a contradiction. This shows Lemma \ref{lemma:branchorbit_a} (iv).   
\end{proof}

\vspace{0.3cm}

\section{Application to distributed clock synchronization algorithms}
\label{section:application}

In this section, we discuss an implementation of the adaptive 4-coupling to an autonomous distributed systems and analyze its performance as a clock synchronization algorithm in various aspects. We borrow some standard terminologies from the distributed algorithms and clock synchronization literature (see e.g., \cite{lynch1996distributed}, \cite{gouda2005state}, and \cite{awerbuch2007time}). We define an \textit{autonomous distributed system}  as a \textit{state model}  augmented with a \textit{local clock } at each node. That is, it is a set of anonymous finite-state machines interacting over a communication network via broadcasting messages to all neighbors, whose execution is triggered upon receiving messages and periodic `beats' of its local clock.

More precisely, let $G=(V,E)$ be a finite simple graph with diameter $d$ and maximum degree $\Delta$. Fix $\epsilon>0$, which we call the \textit{time resolution} of the system. We view each node $v\in V$ as an identical \textit{automaton} with a finite state space $\mathcal{S}$ and a deterministic update rule $\mathcal{F}:\mathcal{S}\rightarrow \mathcal{S}$, which we also call a \textit{distributed algorithm}. We suppose each node $v$ has a \textit{local clock}, which \textit{beats} at times $t_{v}+k\epsilon$ for some $t_{v}\in [0,\epsilon)$ and for all $k\in \mathbb{N}_{0}$, independently of the update rule $\mathcal{F}$. We say the system is \textit{synchronous} if $t_{v}=t_{u}$ for all $u,v\in V$, and \textit{asynchronous} otherwise. The \textit{system configuration} at time $t$ is a map $\Pi_{\bullet}(t):V\rightarrow \mathcal{S}$, $\Pi_{v}(t)\in \mathcal{S}$. Received messages are stored in a first-in-first-out queue. Upon beats, each node updates its state by applying $\mathcal{F}$ to its current state. This defines the \textit{trajectory}$ (\Pi_{\bullet}(t))_{t\ge 0}$ of a given system configuration $\Pi_{\bullet}(0)$. 

In order to implement the adaptive 4-coupling as a distributed algorithm, we discretize the continuum factor $S^{1}\times S^{1}$ of its state space $\Omega=S^{1}\times S^{1}\times (\{1,3\}\times \mathbb{Z}_{2}\times \mathbb{Z}_{4})\times \mathbb{Z}_{3}$ into $\mathbb{Z}_{M}\times \mathbb{Z}_{M}$, where $M\ge 4$ is an integer multiple of $4$. The resulting distributed algorithm, which we call the adaptive 4-coupling modulo $M$ (A4C/$M$), is given below.

\begin{algorithm}\label{algorithm:A4C/M}
	\caption{The adaptive 4-coupling modulo $M$ (A4C/$M$)}\label{euclid}
	\begin{algorithmic}[1]
		\State \textbf{Variables:} 
		\State  \qquad $\phi_{v}\in \mathbb{Z}_{M}:$ phase of node $v$
		\State \qquad $\beta_{v}\in \mathbb{Z}_{M}:$ time lapse from the last blinking time of $v$ modulo $M$
		\State  \qquad $\mu_{v}=(\mu_{v}^{1},\mu_{v}^{2},\mu^{3}_{v})\in \{ 1,3 \}\times \mathbb{Z}_{2} \times \mathbb{Z}_{4}$: memory variable of node $v$
		\State \qquad $\sigma_{v}\in \mathbb{Z}_{3}$: state variable of node $v$  
		\State \qquad $\mathtt{pulse}_{v}\in \{0,1\}$: 1 if $v$ received a $\mathtt{pulse}$ since last beat and 0 otherwise
		\State $\textbf{Upon reciveing a pulse:}$ 	
		\State \qquad \textbf{Do} $\mathtt{pulse}_{v}=1$
		\State $\textbf{Upon a beat:}$ 
		\State  \qquad \textbf{Do} $\mu_{v}^{2}=1-\mathbf{1}(\mu_{v}^{2}=0)\mathbf{1}(0\le \beta_{v} < M/4)$
		\State \qquad \textbf{If} $0<\phi_{v}\le M/2$ and $\mathtt{pulse}_{v}=1$ \textbf{then} 
		\State \qquad \qquad \textbf{Do} $\phi_{v} = \phi_{v}\cdot \mathbf{1}(\sigma_{v}\ne 0) + (\phi_{v}-M/4) \cdot \mathbf{1}(M/4 < \phi_{v} \le M/2) \mathbf{1}(\sigma_{v}=0)$
		\State \qquad\qquad  \textbf{Do} $\sigma_{v} =\sigma_{v}+\mathbf{1}(\sigma_{v}=0)\cdot \mathbf{1}(\mu_{v}^{1}=\mu_{v}^{3})$
		\State \qquad \qquad \textbf{Do} $\mu_{v}^{1}=\mu_{v}^{1}$ and $\mu_{v}^{3}=[\mu_{v}^{3}+\mathbf{1}(\mu_{v}^{3}\ne 3)\cdot \mathbf{1}(\mu_{v}^{2}=1)]\cdot \mathbf{1}(\beta_{v}=M-1)$
		\State \qquad\textbf{If} $\phi_{v}=M-1$ \textbf{then}
		\State \qquad \qquad \textbf{Do} $\mu_{v}=(3-2\cdot \mathbf{1}(\sigma_{v}=2),0,0)$ 
		\State \qquad \qquad \textbf{Do} $\sigma_{v}=\sigma_{v}+\mathbf{1}(\sigma_{v}\ne 0)$
		\State \qquad \qquad  \textbf{Send} $\mathtt{pulse}=1$ to all neighbors
		\State \qquad \textbf{Do} $\beta_{v}=\beta_{v}\cdot \mathbf{1}(\phi_{v}\ne M-1)$ and $\phi_{v}=\phi_{v}+1$ and $\mathtt{pulse}_{v}=0$
	\end{algorithmic}
\end{algorithm}

For the sake of simplicity, we assume that each application of $\mathcal{F}$ at each node takes infinitesimal time. Hence for each $v\in V$, $\Pi_{v}(t)\equiv Const.$ during each interval $(\min (0, t_{v}+\epsilon k) , t_{v}+\epsilon (k+1) ]$, $k\in \{ -1,0,1,2\cdots \}$. Furthermore, we assume that all messages are delivered to neighbors without delay or loss.

Remark that the adaptive 4-coupling can be considered as the continuum limit  of these clock synchronization algorithms as $\epsilon\rightarrow 0$ with $M=4\lfloor 1/4\epsilon \rfloor$. Moreover, for large enough $M$, our main result for the continuum version (Theorem \ref{A4Ctreethm}) carries over to the discrete versions, as stated in the following theorem.

\begin{theorem}\label{thm:implementation}
	Consider an autonomous distributed system on a finite simple graph $G=(V,E)$. Then the distributed algorithm \textup{A4C/$M$} on $G$ has the following properties. 
	\vspace{0.1cm}
	\begin{description}
		\item[(i)] If there exists $t_{0}\ge 0$ such that 
		\begin{equation}\label{eq:A4C/M_sync_cond}
		\phi_{\bullet}(t+\epsilon)=\phi_{\bullet}(t)+1\mod M \quad \forall t\ge t_{0},
		\end{equation}
		then 
		\begin{equation}\label{eq:offset}
		\mathtt{offset}(t):=\max_{\substack{v\in V \\ u\in N(v)}} |\phi_{u}(t)-\phi_{v}(t) \,\, \textup{mod}\,\, M | \le 1 \quad \forall t\ge t_{0}+\epsilon (3M+1). 
		\end{equation}
		Moreover, if the system is synchronous, then $\mathtt{offset}(t)\equiv 0$ for all $t\ge t_{0}+\epsilon (3M+1)$. 
		\vspace{0.1cm}
		\item[(i)] $O(\log M)$ of memory per node is sufficient for implementation. Each node sends at most $\frac{\Delta}{\epsilon M}$ bits per unit time only using binary messages. 
		\vspace{0.1cm}
		\item[(iii)] If $G$ is a finite tree and $M\ge 64$, then there exists an absolute constant $C>0$ for which (\ref{eq:A4C/M_sync_cond}) holds for $t_{0}=C\epsilon M d $ regardless of initial configuration $\Pi_{\bullet}(0)$. 
	\end{description}
\end{theorem}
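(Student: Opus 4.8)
The plan is to treat \textup{A4C/$M$} as a faithful $\tfrac1M$--discretization of the adaptive $4$--coupling, after rescaling time so that one oscillation cycle --- that is, $\epsilon M$ units of real time --- corresponds to one ``second'' of the continuum model.

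For (i), I would prove a discrete analogue of Lemma~\ref{widthlemma}. First note that \eqref{eq:A4C/M_sync_cond} forbids any \emph{rested} node from ever being pulled after $t_{0}$: pulling a rested node in the zone $0<\phi_{v}\le M/2$ changes $\phi_{v}$ by $-M/4$ or resets it to $0$, which is incompatible with $\phi_{v}\mapsto\phi_{v}+1$ since $M$ is a multiple of $4$ with $M\ge4$; the same computation shows that no excitation $E_{v}$ occurs after $t_{0}$. Hence the refractory nodes present at $t_{0}$ only run down their refractory period --- at most two blinks, i.e. at most $2\epsilon M$ of real time by the discrete form of Proposition~\ref{prop:blinking} --- after which every node is rested forever. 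Then, fixing an edge $uv$: within the next $\epsilon M$ steps $u$ blinks, and since $v$ is rested and not pulled, $\phi_{v}\notin(0,M/2]$ at that beat; symmetrically $\phi_{u}\notin(0,M/2]$ at the next blink of $v$. Writing $c:=\phi_{u}-\phi_{v}\bmod M$ (constant once pulls have ceased), these two constraints force $c\in\{-1,0,1\}$, so $\mathtt{offset}(t)\le1$; tracking the transient gives this for all $t\ge t_{0}+\epsilon(3M+1)$. In the synchronous model the blink of $u$ and the processing of its pulse by $v$ occur on the same beat, which tightens the two constraints to $c=0$, hence $\mathtt{offset}(t)\equiv0$.

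The memory and bandwidth claim is a direct count: the local state $(\phi_{v},\beta_{v},\mu_{v},\sigma_{v},\mathtt{pulse}_{v})$ lies in $\mathbb{Z}_{M}\times\mathbb{Z}_{M}\times(\{1,3\}\times\mathbb{Z}_{2}\times\mathbb{Z}_{4})\times\mathbb{Z}_{3}\times\mathbb{Z}_{2}$, which is $2\log_{2}M+O(1)=O(\log M)$ bits; a node broadcasts exactly one one--bit $\mathtt{pulse}$ per blink, and by the discrete form of Proposition~\ref{prop:blinking} it blinks at most once per $\epsilon M$ units of time, so it sends at most $\deg(v)/(\epsilon M)\le\Delta/(\epsilon M)$ bits per unit time.

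For (iii), the core observation is that the continuum adaptive $4$--coupling started from a configuration on the $\tfrac1M$--grid (with the activator chosen on the grid as well) keeps all of its relevant events --- blinks, getting pulled, the $M/4$--shift, merges, resets --- on that grid whenever $M$ is a multiple of $4$; hence the \emph{synchronous} \textup{A4C/$M$}, sampled at beat times, exactly reproduces the adaptive $4$--coupling with time rescaled by $\epsilon M$, and Theorem~\ref{A4Ctreethm} delivers full synchrony --- in particular \eqref{eq:A4C/M_sync_cond} with $\mathtt{offset}\equiv0$ --- by continuum time $C_{\Delta}d$, i.e. real time $\le83\,\epsilon Md$. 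For an asynchronous system, each node processes a pulse at most $\epsilon$ after the triggering blink and its phase trails the continuum phase by $O(1/M)$, so the discrete trajectory \emph{shadows} a continuum trajectory with $O(1/M)$ phase error and $O(\epsilon)$ event--timing error; taking $M\ge64$ makes $1/M$ small relative to the threshold $1/4$ that governs every ``cleaning'' statement of Sections~\ref{Section:The width lemma and the branch width lemma}--\ref{section:locallemmas} (Lemma~\ref{lemma:branchwidth}, Proposition~\ref{prop:branch_excitation}, Lemma~\ref{key}), so each of those survives in the perturbed dynamics with the same constants up to an additive $O(1)$, and feeding this into the diameter induction from the proof of Theorem~\ref{A4Ctreethm} yields \eqref{eq:A4C/M_sync_cond} at some $t_{0}=C\epsilon Md$ with $C$ absolute. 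The main obstacle will be precisely this last point --- a priori the $O(1/M)$ per--step discrepancies could accumulate over the $\Theta(\epsilon Md)$--long run. The way around it is that the adaptive $4$--coupling is self--correcting: every constant--length time window ends with the relevant branch in one of a finite list of grid--aligned configurations (branch width $<1/4$, or one of the two configurations in Figure~\ref{fig:2branch_limiting_config}), so the perturbation is reabsorbed within each window rather than compounding, and $M\ge64$ leaves enough room in the $\tfrac1M$--grid for the reabsorption to be exact.
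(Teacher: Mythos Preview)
Your treatment of parts (i) and (ii) matches the paper's sketch essentially line for line: both argue that \eqref{eq:A4C/M_sync_cond} forbids any effective pull (since a pull followed by the $+1$ increment cannot produce a net $+1$ change), that refractory states therefore wind down within $2\epsilon M$, and that the pairwise ``when $u$ blinks, $v$ is not in the pull zone'' constraint forces $|\phi_u-\phi_v|\le 1$; your memory and bandwidth counts are also identical to the paper's.

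For part (iii) the approaches diverge. The paper does \emph{not} attempt a shadowing argument; instead it observes that the discrete analogue of the branch width lemma (Lemma~\ref{lemma:branchwidth}) requires the tighter hypothesis $\omega_B<1/4-2/M$ to absorb the one-beat asynchrony, and then claims that the key lemma (Lemma~\ref{key}) can be strengthened to deliver this tighter branch width for all $M\ge 56$, with the case analysis left to the reader. Your route---exact grid preservation in the synchronous case plus a shadowing/self-correction argument in the asynchronous case---is more conceptual, and the synchronous half is correct. But the asynchronous half has a gap: you invoke the continuum's ``finite list of absorbing configurations'' to argue that the $O(1/M)$ errors are reabsorbed each window, yet those absorbing statements (Lemmas~\ref{lemma:branchwidth}, \ref{lemma:branch_attraction}, \ref{key}) only guarantee $\omega_B<1/4$, which is \emph{exactly} the threshold, so an $O(1/M)$ perturbation can tip the discrete branch width back over $1/4$ and break the induction. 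To close this you would need the continuum lemmas to yield $\omega_B<1/4-c/M$ for some $c>0$---which is precisely the strengthening the paper asserts directly. In other words, your shadowing framework does not bypass the threshold tightening; it relocates it, and the missing step is the same in both approaches.
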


\begin{proof}[Sketch of proof]
	Say a node $v$ \textit{blinks} at time $t$ if $\phi_{v}(t)=M-1$ and $\phi_{v}(t^{+})=0$. 
	\vspace{0.1cm}
	\begin{description}
		\item{(i)} Note that the hypothesis implies $\sigma_{\bullet}(t)\equiv 0$ for all $t\ge t_{0}+\epsilon (2M+1)$ and also each node takes phase $M-1$ exactly once in every $M$ beats. For each $x\in V$ and $i\in \mathbb{N}$, denote by $\tau_{x;i}$ the $i^{\text{th}}$ time that $x$ blinks after time $t_{0}+\epsilon(2M+1)$. The hypothesis is then equivalent to  $\tau_{x;i+1}-\tau_{x;i}\equiv \epsilon$ for all $x\in V$ and $i\in \mathbb{N}$. Fix two neighboring nodes $u,v\in V$. By the hypothesis, $\phi_{u}(\tau_{v;i}^{+})\equiv r\mod M$ for all $i\in \mathbb{N}$ where $r\in \{M/2, M/2 + 1, \cdots  M-1\} \cup \{0\}$. By changing the role of $u$ and $v$, we conclude that $r\in \{ M-1,0 \}$. Furthermore, we must have $r=0$ if the system is synchronous. This holds for all adjacent pairs $(u,v)$. Since $\tau_{v;1}\le t_{0}+\epsilon(3M+1)$ for all $v\in V$, the assertion then follows easily. 
		
		\vspace{0.1cm}
		\item{(ii)} Each node needs to store five variables, whose size depends at most linearly on $M$ and no other parameters. Hence the A4C/$M$ can be implemented by using $O(\log M)$ memory per node. The second part follows from noting that each node only sends a binary pulse signal to each neighbor upon blinks, which occurs at most once in every $\epsilon M$ since phase updates can only be inhibited. 
		\vspace{0.1cm}
		
		\item{(iii)} Now suppose $G$ is a tree. We wish to show that analogs of Lemmas \ref{lemma:branchwidth} and \ref{key} hold for the discretized version. An entirely similar argument shows that a version of Lemma \ref{lemma:branchwidth} holds for the discretized version, which requires $\omega_{B}(t_{0})<1/4-2/M$ instead of $\omega_{B}(t_{0})<1/4$. Hence it suffices to show that Lemma \ref{key} can be strengthened to guarantee $\omega_{B}(t_{0})<1/4-2/M$ for some $t_{0}=O(\epsilon M)$. We claim that this could be done for all $M=4k$ for $k\ge 14$, whose verification is left to the reader. 
	\end{description}
\end{proof}

\begin{customremark}{8.2}
	To emphasize possible advantages of our algorithm implied by the above theorem, we describe the following application scenario. The modulo $M$ phase variable $\phi_{v}$ can be used to extend the local time frame from period $\epsilon$ to $\epsilon M$, enabling one to program the behavior of each node over a longer period than what is provided by the local clock. Given that each phase $\phi_{v}$ keep incrementing by 1 mod $M$ with a small and bounded offset, this provides a good global time frame for collective computations over the system. 
	
	For example, imagine a wireless sensor network trying to transmit a binary string, say $a_{1}a_{2}\cdots a_{n}$ for some $n<M-1$, to an observer at distance. If the local times are perfectly synchronized, then we can simply let each node transmit bit $a_{i}$ at phase $i\in \mathbb{Z}_{M}$, possibly a period at phase $M-1$ to mark the end of each string. However, if the local times are not well synchronized, then the nodes could transmit different bits at each time so the observer could find it hard to determine what string is being transmitted. 
	
	Here is a possible strategy assuming the offset is bounded by 1. Then the time difference along any edge can be at most $\epsilon$, so the global time difference is at most $\epsilon d$. If $M>4(n+1)d$, then we may partition $\mathbb{Z}_{M}$ into $M/4d$ intervals of equal length $4d$. Then we can let each node to transmit bit $a_{i}$ at phase $4di$. Now all nodes transmit bit $a_{i}$ during an interval of length $\le 2\epsilon d$, and any consecutive such intervals are separated by an interval of length $\ge 2\epsilon d$ where all nodes are idle. Thus the desired binary string could be effectively transmitted to the observer. Hence, in the scaling $M=O(1/\epsilon)$, the maximal length of binary strings which can be collectively transmitted during a time window of length $O(1)$ increases linearly in $1/\epsilon$.

	Theorem \ref{thm:implementation} (i) says the A4C/$M$ is a \textit{clock synchronization algorithm}, that is, if the phases of all nodes increment by 1 modulo $M$, then it is guaranteed to have small bounded offset. Theorem \ref{thm:implementation} (ii) is about efficiency of the A4C/$M$. Its first part says the algorithm is \textit{scalable}, that is, it can be implemented with a constant memory per node regardless of the communication network $G$. Its second part implies that the algorithm uses bounded amount of information exchange to operate as the time resolution $\epsilon$ goes to zero, under the scaling $M=O(1/\epsilon)$. Lastly, under the same scaling, Theorem \ref{thm:implementation} (iii) together with (ii) guarantees the convergence of A4C/$M$ on finite trees from all initial configurations in time $O(d)$. Such a global convergence of a distributed algorithm is called \textit{self-stabilization} \cite{dijkstra1982self}. $\blacktriangle$   
\end{customremark}

\vspace{0.1cm}

Next, we extend the self-stabilization of our clock synchronization algorithm to general graphs. Our approach follows the popular paradigm of first designing distributed algorithms for trees and then combining with a spanning tree algorithm. There is an extensive literature on distributed spanning tree construction (see e.g., the survey by G\"artner \cite{gartner2003survey}). Itkis and Levin \cite{itkis1994fast} proposed a scalable randomized (Las Vegas) distributed algorithm, which self-stabilizes on arbitrary graphs with probability 1 and the worst case running time has expectation of order $O(d^{5}\log|V|)$. However, their hypothesis was that each node has a \textit{pointer} to each of its neighbors so that it can distinguish messages from different neighbors; In the end of algorithm, each node $v$ ends up with a unique pointer $\mathtt{root}_{v}$ towards its root and a set $\mathtt{Children}_{v}$ of pointers toward its children. This is certainly not possible in our anonymous system, so we first need to construct `local identifiers' in a scalable and self-stabilizing way.

The use of local pointers can be justified if we adopt the standard assumption that nodes have distinct ID. That is, we now let each node append $(\mathtt{targetID},\mathtt{sourceID})$ of additional field to each message they broadcast; then receiver may acknowledge messages of $\mathtt{targetID}$ matching its own ID, and also can distinguish messages from different neighbors. However, since there are at least $|V|$ ID values, this requires $O(\log |V|)$ memory per node to process the ID field. In fact, the ID's only need to be distinct among neighbors and such locally unique identifiers can be implemented with $O(\log \Delta)$ memory per node. 

For this matter, it is enough to construct a coloring of given network where no two nodes within graph distance 2 have the same color. Namely, call a map $\iota :V\rightarrow \mathbb{N}$  \textit{distance $\le 2$ coloring} if any restriction on a $2$-ball is injective, i.e., $\iota(u)\ne \iota(v)$ whenever $\text{dist}(u,v)\le 2$, where $\text{dist}$ denotes the shortest path distance in $G$. Notice that $\iota$ is a local identifier iff it is a proper vertex coloring on the \textit{supergraph} $G^{2}$, which is obtained by adding edges between non-adjacent nodes in $G$ with distance $\le 2$. Since $G^{2}$ has maximum degree at most $\Delta^{2}-1$, it follows that $G$ admits a local identifier assuming $\Delta^{2}$ distinct values.

\begin{algorithm}
	\caption{Randomized distance $\le 2$ coloring}\label{euclid}
	\begin{algorithmic}[1]
		\State{$\textbf{Variables:}$} 
		\State \qquad $\mathtt{R}_{v}\in \{0,1\cdots,\Delta^{2}\}:$ color of node $v$
		\State \qquad $\mathtt{NR}_{v}\subseteq \{0,1\cdots,\Delta^{2}\}:$ set of colors of neighbors of $v$ 
		\State{$\textbf{Upon a beat:}$}
		\State{\qquad \textbf{Do} Receive $\mathtt{NR}_{u}$ from all $u\in N(v)$}
		\State{\qquad \textbf{If} $\mathtt{R}_{v} < \max \left( \{0,1,\cdots,\Delta^{2}\} \setminus \bigcup_{u\in N(v)}\mathtt{NR}_{u}\right)$ \\
			\qquad \qquad \textbf{If} $\text{rand}\{0,1\}=1$ \textbf{then}\\
			\qquad \qquad \textbf{Do}  $\mathtt{R}_{v} = \max\left( \{0,1,\cdots,\Delta^{2}\} \setminus \bigcup_{u\in N(v)}\mathtt{NR}_{u}\right)$} 
		\State{\qquad \textbf{Do} Receive $\mathtt{R}_{u}$ from all $u\in N(v)$ and update $\mathtt{NR}_{v}$} 
	\end{algorithmic}
\end{algorithm}

In particular, a distance $\le 2$ coloring can be implemented in any distributed system by paying $O(\log \Delta )$ memory per node. Gradinariu and Tixeuil \cite{gradinariu2000self} obtained a self-stabilizing Las Vegas distributed algorithm for the usual vertex (i.e., distance 1) coloring using at most $\Delta+1$ colors, assuming $O(\log \Delta)$ memory per node with expected worst case running time of $O(\Delta \log |V|)$. A minor modification of their algorithm, which is given above, yields a distributed distance $\le 2$ coloring construction as given above. Clearly the algorithm requires $O(\log \Delta)$ memory per node. Also, a similar analysis for the distance 1 version shows that its expected worst case running time is $O(\Delta^{2} \log |V|)$.

Now let $\mathcal{A}$ denote the algorithm obtained by composing the distance $\le 2$ coloring algorithm, spanning tree algorithm, and the A4C/$M$ for $M\ge 64$.

\begin{corollary}\label{thm:clocksync}
	Consider an autonomous distributed system on an arbitrary finite simple graph $G=(V,E)$ with diameter $d$ and maximum degree $\Delta$. Then the composite algorithm $\mathcal{A}$ has the following properties. 
	\vspace{0.1cm}
	\begin{description}
		\item[(i)] $\mathcal{A}$ can be implemented with $O(\log M\Delta)$ memory per node. 
		\vspace{0.1cm}
		\item[(ii)] Define the worst case running time $\tau_{G}$ of $\mathcal{A}$ by
		\begin{equation}
		\tau_{G} = \max_{\Pi_{\bullet}(0):V\rightarrow \mathcal{S}} \inf \{ t_{0}\ge 0 \,:\, \text{(\ref{eq:A4C/M_sync_cond}) holds}  \},
		\end{equation}
		where the infimum is taken over all sample paths of the two randomized algorithms. Then $\mathbb{E}[\tau_{G}] = O(\epsilon M |V|+(d^{5}+\Delta^{2})\log |V|)$. In particular, $\mathcal{A}$ is self-stabilizing on arbitrary finite simple connected graphs with probability 1.
	\end{description}
\end{corollary}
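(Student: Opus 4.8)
The plan is to treat $\mathcal{A}$ as a layered (``fair composition'') of its three constituents run concurrently, in which the dependency is strictly one-directional: the randomized distance $\le 2$ coloring layer produces locally unique identifiers, which are read by the Itkis--Levin spanning tree layer, whose output pointers $\mathtt{root}_{v},\mathtt{Children}_{v}$ are read by the A4C/$M$ layer; no layer feeds back into a lower one. The point of this structure is that once a lower layer has self-stabilized it presents a static, correct input to the layer above, which from that moment on behaves exactly as the stand-alone algorithm already analyzed. So the stabilization time is bounded by adding the times for the layers to stabilize in order.

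For part (i), I would simply add up the per-node persistent memory of the three layers. By Theorem \ref{thm:implementation}(ii) the A4C/$M$ layer uses $O(\log M)$ bits. Since the supergraph $G^{2}$ has maximum degree $\le \Delta^{2}-1$, the distance $\le 2$ coloring runs over the palette $\{0,1,\dots,\Delta^{2}\}$ and uses $O(\log \Delta)$ bits per node, and it yields a proper coloring of $G^{2}$, i.e. identifiers that are distinct within each $2$-ball. Attaching such an identifier as a $(\mathtt{targetID},\mathtt{sourceID})$ field of length $O(\log \Delta)$ to each broadcast makes Itkis--Levin's pointer hypothesis valid in our anonymous system, and their spanning tree layer can then be implemented with $O(\log \Delta)$ bits of persistent state per node (its pointers range only over the $\le \Delta$ neighbors). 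Summing gives $O(\log M + \log \Delta) = O(\log M\Delta)$ bits per node.

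For part (ii), I would set up a three-stage stabilization argument. Let $T_{1}$ be the first time the coloring layer reaches a stable proper coloring of $G^{2}$; let $T_{2}$ be the additional time, measured from $T_{1}$, for the spanning tree layer to stabilize; and let $T_{3}$ be the further time, measured from $T_{1}+T_{2}$, until (\ref{eq:A4C/M_sync_cond}) first holds for the A4C/$M$ layer running on the now-fixed spanning tree $\mathcal{T}$. Then $\tau_{G}\le T_{1}+T_{2}+T_{3}$. The cited analysis of the coloring gives $\mathbb{E}[T_{1}]=O(\Delta^{2}\log|V|)$ from every initial configuration. Conditioned on the configuration at time $T_{1}$, the spanning tree layer now sees valid identifiers and self-stabilizes in expected additional time $O(d^{5}\log|V|)$ by Itkis--Levin, so $\mathbb{E}[T_{2}]=O(d^{5}\log|V|)$ by the tower property. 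Finally, once $\mathcal{T}$ is fixed, each node uses its pointers to restrict the A4C/$M$ pulse communication to its tree neighbors, so the A4C/$M$ layer is literally an instance of the algorithm of Section \ref{section:application} on the tree $\mathcal{T}$; the tree version of Theorem \ref{thm:implementation}(iii) (applicable since $M\ge 64$) gives $T_{3}\le C\epsilon M\cdot\mathrm{diam}(\mathcal{T})$ deterministically, and $\mathrm{diam}(\mathcal{T})\le |V|-1$. Adding the three bounds yields $\mathbb{E}[\tau_{G}]=O(\epsilon M|V|+(d^{5}+\Delta^{2})\log|V|)$; since $T_{1},T_{2}<\infty$ almost surely (both layers being Las Vegas) and $T_{3}$ is always finite, $\tau_{G}<\infty$ almost surely, and combined with Theorem \ref{thm:implementation}(i) (bounded offset at all later times) this gives self-stabilization with probability $1$.

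I expect the main obstacle to be not any individual estimate --- those are all imported --- but making the composition fully rigorous in the anonymous broadcast model: precisely specifying how the A4C/$M$ layer reads the tree structure and discards pulses that arrive along non-tree edges, verifying that a correct distance $\le 2$ coloring genuinely supplies the neighbor-distinguishing ``pointers'' that Itkis--Levin assume, and justifying the tower-property step, namely that the additional stabilization time of each upper layer has the claimed expectation \emph{uniformly} over whatever configuration the lower layer hands it. A secondary point worth flagging is that a spanning tree of $G$ may have diameter $\Theta(|V|)$ rather than $\Theta(d)$ (e.g.\ a Hamiltonian path spanning a cycle), which is exactly why the term $\epsilon M|V|$ --- and not $\epsilon M d$ --- is the best one can claim.
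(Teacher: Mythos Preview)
Your proposal is correct and follows essentially the same three-layer decomposition as the paper: bound the memory by summing (equivalently, taking the maximum of) the requirements of the coloring, spanning tree, and A4C/$M$ layers, and bound the expected running time by adding the stabilization times of the layers in order, invoking Theorem~\ref{thm:implementation}(iii) on the final spanning tree with $\mathrm{diam}(\mathcal{T})\le |V|$. Your writeup is in fact more careful than the paper's terse proof about the tower-property step and the pointer-via-coloring justification, but the underlying argument is the same.
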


\begin{proof}
	(i) follows by taking maximum of memory requirements of each of the three algorithms. Now we show (ii). On the first layer, the randomized distance $\le 2$ algorithm presented before constructs local identifiers with worst case expected running time $O(\Delta^{2}\log |V|)$. On the second layer, the randomized distributed spanning tree algorithm of Itkis and Levin \cite{itkis1994fast} constructs a time series of subgraphs $(H_{t})_{t\ge 0}$ of $G$, which converges, almost surely, to some spanning tree $T\subseteq G$ of diameter $d'\le |V|$ in expected $O(d^{5}\log |V|)$ time in the worst case. On the top layer, each node runs the A4C/$M$ on the time series $(H_{t})_{t\ge 0}$, meaning that in every pulse they send, nodes specify their root and children as target field, and they acknowledge messages containing target field matching their current local ID. After the first two layer converges, (\ref{eq:A4C/M_sync_cond}) holds for some $t_{0}=O(\epsilon M d')$ by Theorem \ref{thm:implementation} (iii). Noting that $d'\le |V|$, linearly of expectation gives the assertion. 
\end{proof}

\vspace{0.3cm}
\section{Concluding Remarks}
\label{Section:Concluding remarks}
\vspace{0.2cm}

\textbf{On complete graphs.} While the very first almost sure self-stabilizing property of PCOs were established on complete graphs in \cite{mirollo1990synchronization}, we established self-stabilization of our pulse-coupling on the topology of the other extreme, namely trees. Then it is natural to ask whether our models are also self-stabilizing on complete graphs. Interestingly, the answer is negative. For instance, consider a phase configuration $\Lambda_{0}$ on $K_{n}$ for $n\ge 3$ where only three phases $0,1/4$, and $5/8$ are occupied. Then the orbit under (adaptive) 4-coupling is non-synchronizing with period of 5 seconds. Such a bad configuration does depend highly on symmetry, so one could hope that there are not too many of them so that we can still have almost sure synchronization with respect to some probability measure on the initial configurations. However, unlike other traditional PCO models, our pulse-coupling uses a very strong discretization property, merging arbitrary number of nearby phases into one single phase. Hence it is not  entirely obvious whether the (adaptive) 4-coupling is almost sure self-stabilizing on complete graphs. 
\vspace{0.3cm}

\textbf{Propagation delay.} Hong et al. \cite{hong2005scalable} gave a detailed discussion on applying existing theories of PCO systems to actual distributed control protocol in wireless sensor networks. One of the main concern was to relax the ideal assumption of zero propagation delay of pulsing signal; since pulse-couplings are based on sparsely triggered events compared to processing times in each unit, they are the main source of synchronization error. As many authors pointed out through variants of the width lemma (Lemma \ref{widthlemma}), pulse-couplings are highly robust against propagation delays once the width becomes $<1/2$. Our tree theorems (Theorem \ref{4treethm} and \ref{A4Ctreethm}) basically follows from the branch width lemma (Lemma \ref{lemma:branchwidth}) and the key lemma (Lemma \ref{key}). It is not hard to see that the branch width lemma still holds even if we allow propagation delays of $<1/4$ second on the edges. Furthermore, all other propositions still remain valid if we assume identical propagation delays on the edges; hence our main results would still hold, with order $O(d)$ of maximum time dispersion after local synchronization. We claim that this is still true for non-identical propagation delays of magnitude $<1/4$. Verification of this claim is left for a future work. 
\vspace{0.3cm}

\textbf{Non-identical or time-varying frequencies.} One of the most interesting open question seems to be extending our theory to non-identical or time-varying frequencies for each local clock. That is, an ideal clock has constant speed 1, but in general a clock may have any fixed or time-varying speed $\omega_{v}(t)\in (1-\rho,1+\rho)$ for some small constant $\rho\in (0,1)$, which is called the `skew' of the system. Are some versions of Lemma \ref{lemma:branchwidth} and \ref{key} still valid in this asynchronous setting? Even the validity of the width lemma seems not entirely clear, as the skew can accumulate (unlike propagation delay) over a long period and stretch the small width beyond the threshold. Some (diffusive) couplings in the literature achieves frequency consensus by assuming that the nodes can read frequencies of neighbors \cite{mallada2011distributed}, but such protocol requires unbounded memory per node, unless the frequencies are globally bounded and such bound is known to the nodes a priori. The famous Kuramoto model only uses neighboring phase information and obtains frequency consensus on complete graphs \cite{strogatz2000kuramoto}, but in a non-scalable way. Hence it would be interesting to see if the adaptive 4-coupling (or its variant) could achieve tight phase synchrony in spite of nonzero clock skew.   
\vspace{0.2cm}

\textbf{ Fast minimum diameter spanning tree algorithm and improving the bound on $\mathbb{E}[\tau_{G}]$.} The biggest bottleneck for the running time of $\mathcal{A}$ comes from the unknown diameter $d'$ of the random spanning tree. The spanning tree algorithm by Itkis and Levin \cite{itkis1994fast} constructs a depth-first search tree rooted at a randomly chosen `center'; hence there is no guarantee that the so constructed spanning tree would have diameter close to the optimal diameter $d$ of the underlying graph $G$. However, in some special cases we would expect a depth-first search tree centered at a randomly chosen node should have expected diameter of order $O(\log |V|)$ in some special cases where $G$ has a weak expansion property. In general, known algorithms of finding minimum diameter spanning tree (MDST) runs too slow or requires too much memory per node for our purpose (see, e.g., 
\cite{bui2004distributed}). If there is a sub-linear time sub-linear diameter spanning tree algorithm which uses $O(\log \Delta)$ memory per node, we can improve the bound on $\mathbb{E}[\tau_{G}]$ to $o(|V|)$ while keeping the same efficiency and scalability.  


\section*{Acknowledgement}

The author give special thanks to David Sivakoff, Steven Strogatz, and Osama Khalil for helpful discussions. Valuable comments from the referees helped improving this work substantially. Finally, the author appreciates Joowon Lee for her warm support.

\appendix
\renewcommand*{\appendixname}{}
\numberwithin{equation}{section}
\numberwithin{figure}{section}
\numberwithin{table}{section}

\setcounter{MaxMatrixCols}{20}

\end{document}